\newtheorem{remark}{Remark}[section]
\newtheorem{theorem}{Theorem}[section]
\newtheorem{lemma}{Lemma}[section] %%Delete [thm] to re-start numbering
\newtheorem{proposition}{Proposition}[section] %%Delete [thm] to re-start numbering
\newtheorem{definition}{Definition}[section]
\newtheorem{corollary}{Corollary}[section]
\newcommand{\RR}{\mathbb{R}}
\newcommand{\R}{\RR}
\newcommand{\Sn}{\mathbb{S}_{++}^n}
\newcommand{\gr}{\nabla_{_H}}
\newcommand{\F}{{{\cal F}}}
\newcommand{\A}{{\cal A}}
\newcommand{\infi}{\rightarrow +\infty}
\newcommand{\T}{\textrm}
\newcommand{\N}{{\cal N}}
\newcommand{\<}{\langle}
\newcommand{\HS}{(H\mbox{-}SD)}
\newcommand{\ra}{\rangle}
\newcommand{\me}{(\cdot,\cdot)}
\def\Ker{{\rm Ker}\:}
\def\Im{{\rm Im}\:}
\def\dom{{\rm dom}\:}
\def\inte{{\rm int}\:}
\def\inteA{{\rm int}_\A}
\def\ri{{\rm ri}\:}
\def\reels{\RR}
\def\argmin{\mbox{\rm Argmin}}
\def\eps{\varepsilon}
\def\bd{{\rm bd}\:}
\title{\Large Hessian Riemannian gradient flows in convex programming\footnote{\color{purple}Submitted in December 2012, published in 2004: SIAM J. CONTROL OPTIM., Vol. 43, No. 2, pp. 477--501 }}
\author{Felipe  Alvarez\thanks{Departamento de Ingenier\'\i a Matem\'atica
and Centro de Modelamiento Matem\'atico (CNRS UMR 2071),
Universidad de Chile, Blanco Encalada 2120, Santiago, Chile. {\tt falvarez@dim.uchile.cl}. Supported by Fondecyt 1020610,
Fondap en Matem\'aticas Aplicadas and Programa Iniciativa Cient\'\i fica
Milenio.} \and J\'er\^ome Bolte\thanks{ACSIOM-CNRS FRE 2311, D\'epartement de Math\'ematiques, case 51, Universit\'e Montpellier II, Place Eug\`ene Bataillon, 34095 Montpellier cedex 5, France. Partially supported by Ecos-Conicyt C00E05.} \and Olivier Brahic.\thanks{GTA-CNRS UMR 5030, D\'epartement de Math\'ematiques, case 51, Universit\'e Montpellier II, Place Eug\`ene Bataillon, 34095 Montpellier cedex 5, France.}}
\date{}
\begin{document}
\maketitle
{\bf Abstract} Motivated by a constrained minimization problem, it is studied the gradient
flows with respect to Hessian Riemannian metrics induced by
convex functions of Legendre type.  The first result
characterizes Hessian Riemannian structures on convex sets as
those metrics that have a specific integration property with
respect to variational inequalities, giving a new motivation for
the introduction of Bregman-type distances. Then, the general
evolution problem is introduced and a differential inclusion
reformulation is given.  A general existence result is proved and
global convergence is established under quasi-convexity
conditions, with interesting refinements in the case of convex
minimization. Some explicit examples of these gradient
flows are discussed. Dual trajectories are identified and sufficient
conditions for dual convergence are examined  for a convex
program with positivity and equality constraints. Some convergence
rate results are established. In the case of a linear objective
function, several optimality characterizations of the orbits are
given: optimal path of viscosity methods, continuous-time model
of Bregman-type proximal  algorithms, geodesics for some adequate
metrics and projections of $\dot q$-trajectories of some Lagrange equations and completely integrable Hamiltonian systems. \\

\noindent
{\bf Keywords}  Gradient flow, Hessian Riemannian
metric,  Legendre type convex function, existence, global
convergence, Bregman distance, Liapounov functional, quasi-convex
minimization, convex and linear programming, Legendre transform
coordinates, Lagrange and Hamilton equations.\\

AMS classification: 
 34G20, 34A12, 34D05, 90C25.\\

\thispagestyle{plain} 
\markboth{F. ALVAREZ, J. BOLTE \& O. BRAHIC}{HESSIAN RIEMANNIAN GRADIENT FLOWS}

%\tableofcontents
\section{Introduction}
The aim of this paper is to study the existence, global convergence and 
geometric properties of gradient flows with respect to a specific class of 
Hessian Riemannian metrics on convex sets. Our work is indeed  deeply 
related to the constrained minimization problem
$$
\min \{f(x)\mid x\in \overline{C},\; Ax=b\}, \leqno (P)
$$
where $\overline{C}$ is the closure of a nonempty, {\it open}  and convex 
subset $C$ of $\R^n$, $A$ is a $m< n$
real matrix with $m\leq n$,  $b\in \RR^m$ and $f\in C^1(\R^n)$. A strategy 
to solve $(P)$ consists in endowing $C$ with a Riemannian structure $\me^H$, 
to
restrict it to the relative interior of the feasible set $\F:=C\cap\{x\mid 
Ax=b\}$, and then to consider the trajectories generated by the steepest 
descent vector field $-\gr f_{|_{\F}}$. This leads to the initial value 
problem
$$\HS\qquad \dot x(t)+\gr f_{|_{\F}}(x(t))=0,\:x(0)\in \F,$$
where $\HS$ stands for $H$-steepest descent. We focus  on those metrics that 
are induced by the Hessian $H=\nabla^2 h$ of a {\em Legendre type} convex 
function $h$ defined on $C$ (cf. Def. \ref{D:legendre}).

The use of Riemannian methods in optimization has increased
recently: in relation with Karmarkar algorithm and
linear programming  see Karmarkar \cite{Kar90}, Bayer-Lagarias \cite
{BaL89}; for continuous-time models of proximal type algorithms and related 
topics see  Iusem-Svaiter-Da Cruz \cite{IuS99}, Bolte-Teboulle \cite{BoT02}. 
For a systematic dynamical system approach to constrained optimization based 
on double bracket flows, see  Brockett \cite{Bro88,Bro91}, the  monograph of 
Helmke-Moore \cite{HeM94} and the references therein. On the other hand, the 
structure of $\HS$ is also at the heart of some important problems in 
applied
mathematics. For connections with population dynamics and game theory see 
Hofbauer-Sygmund \cite{HoS98}, Akin \cite{Aki79}, Attouch-Teboulle 
\cite{AttTeb}. We will see that  $\HS$ can be reformulated as the 
differential inclusion $\frac{d}{dt}\nabla h(x(t))+\nabla f(x(t))\in \Im
A^T,\:x(t)\in\F,$ which is formally similar to some evolution problems in 
infinite dimensional spaces arising in thermodynamical systems, see for 
instance Kenmochi-Pawlow \cite{KenPaw} and references therein.

   A classical approach in the asymptotic analysis of
dynamical systems consists in exhibiting attractors
of the orbits by using Liapounov functionals. Our choice of Hessian 
Riemannian metrics is based on this idea.  In fact, we consider first the 
important case where $f$ is convex, a condition that permits us to 
reformulate  $(P)$ as a variational inequality problem: $\mbox{find }a\in 
\overline{\F} \mbox{ such that }(\gr
f_{|_{\F}}(x),x-a)^H_x\geq0\:\mbox{ for all }x\mbox{ in }\F.$ In order to 
identify a suitable Liapounov functional, this variational problem  is met 
through the
following
integration problem: {\it find the metrics $\me^H$ for which the vector 
fields $V^a:\F\to\RR^n$, $a\in \F$, defined by  $V^a(x)=x-a,$
are $\me^H$-gradient vector fields}.
Our first result (cf. Theorem \ref{T:Poincare}) establishes that such 
metrics are  given by the Hessian of strictly convex functions, and in that 
case
the vector fields $V^a$ appear  as gradients with respect to the
second variable of some distance-like functions that are called 
$D$-functions. Indeed, if
$\me^H$ is induced by the Hessian $H=\nabla^2 h$ of $h: \F\mapsto \R$, we 
have for all
$a,x$ in $\F$: $\gr D_h(a,.)(x)=x-a, \mbox{ where }D_h(a,x) 
=h(a)-h(x)-dh(x)(x-a).$ For another characterization
of Hessian metrics,  see
Duistermaat \cite{Dui01}.

Motivated by the previous result and with the aim of solving $(P)$, we are 
then naturally led to consider Hessian Riemannian metrics that cannot be 
smoothly
extended out of $\F$. Such a requirement is fulfilled by the Hessian of a  
 {\it Legendre  (convex) function } $h$, whose definition is recalled in section 
\ref{S:gradientflow}. We give then a differential inclusion
reformulation of $\HS$, which permits to show that in the case of a linear 
objective function $f$, the flow of $-\gr f_{|_{\F}}$
stands at the crossroad of many optimization methods. In fact,  following 
\cite{IuS99}, we prove that
viscosity methods and Bregman proximal algorithms produce their paths or iterates in the orbit of $\HS$. The 
$D$-function of $h$ plays an essential role for this. In section 
\ref{S:Examples} it is given  a systematic method to construct Legendre 
functions based on barrier functions for convex inequality problems, which 
is illustrated with some examples; relations to other works are discussed.

Section \ref{GEAS} deals with global existence and convergence properties. 
After having given a non trivial  well-posedness result  (cf. Theorem 
\ref{T:existence}), we prove in section \ref{S:value} that 
$f(x(t))\rightarrow \inf_{\overline{\F}}f$ as $t\infi$ whenever $f$ is 
convex.   A natural problem that arises is the trajectory  convergence to a 
critical point. Since one expects the limit  to be a (local) solution to 
$(P)$, which may belong to the boundary of $C$, the notion of critical point 
must be understood in the sense of the  optimality condition for a local 
minimizer $a$  of $f$ over $\overline{\F}$:
$$({\cal O})\qquad \nabla f(a)+N_{\overline{\F}}(a)\ni 0,\:a\in
\overline{\F},$$
where $N_{\overline{\F}}(a)$  is the normal cone to $\overline{\F}$ at $a$, 
and $\nabla f$ is the Euclidean gradient of $f$. This involves an asymptotic 
singular  behavior that is rather unusual in the classical theory of 
dynamical systems, where the critical points are typically supposed to be in 
the manifold.
In section \ref{S:bregman} we assume that the Legendre type function $h$ is 
a {\em Bregman function with zone $C$} and prove that under a 
quasi-convexity assumption on $f$, the trajectory converges to some point 
$a$ satisfying $({\cal O})$. When $f$ is convex,  the preceding result 
amounts to the convergence of $x(t)$ toward a global minimizer of $f$ over 
$\overline{\F}$. We also give a variational characterization of the limit 
and establish an abstract result on the rate of convergence under uniqueness 
of the solution. We consider in section \ref{S:LP} the case of linear 
programming, for which asymptotic convergence as well as a variational 
characterization are proved without the Bregman-type condition.  Within this 
framework, we also give some estimates on the convergence rate that are 
valid for  the specific Legendre functions commonly used in practice.
In section \ref{S:dual}, we consider the interesting  case of positivity and 
equality constraints, introducing a {\em dual} trajectory $\lambda(t)$ that, 
under some appropriate conditions,  converges to a solution to the dual 
problem of $(P)$ whenever $f$ is convex, even if primal convergence is not 
ensured.

Finally, inspired by the seminal work \cite{BaL89}, we define in section 
\ref{S:legendre-transform} a change of coordinates called {\it Legendre 
transform coordinates}, which permits to show that the orbits of $\HS$ may 
be seen as straight lines in a positive cone. This leads to additional 
geometric interpretations of the flow of $-\gr
f_{|_{\F}}$. On the one hand, the orbits
are geodesics with respect to an appropriate metric and,
on the other hand, they may be seen as $\dot q$-trajectories of
some Lagrangian, with consequences in terms of integrable Hamiltonians.

  {\bf Notations.} $\Ker A=\{x\in\reels^n\;|\; Ax=0\}.$
The orthogonal complement of $\A_0$ is denoted by $\A_0^\perp$, and
$\langle \cdot , \cdot \rangle$ is the standard Euclidean scalar product of
$\R ^n$. Let us denote by $\Sn$ the cone of real symmetric definite positive
matrices. Let $\Omega\subset\R^n$ be an open set. If $f:\Omega\to \R$ is
differentiable then $\nabla f$ stands for the  Euclidean gradient of $f$. If
$h:\Omega\mapsto\R$
is twice differentiable  then its Euclidean Hessian at $x\in \Omega$ is
denoted by $\nabla^2 h(x)$ and is defined as the endomorphism of $\R^n$
whose matrix in canonical coordinates is given by $[\frac{\partial^2
h(x)}{\partial x_i\partial x_j}]_{i,j \in \{1,..,n\} }$. Thus,  $\forall
x\in \Omega$, $d^2h(x)=\<\nabla^2h(x)\:\cdot,\cdot\ra$.

%%%%%%%%%%%%%%%%%%%%%%%%%%%%%%%%%%%%%%%%%%%%%%%%%%%%%%%%%%%%%%%%

%%%%%%%%         PRELIMINARIES          %%%%%%%%

%%%%%%%%%%%%%%%%%%%%%%%%%%%%%%%%%%%%%%%%%%%%%%%%%%%%%%%%%%%%%%%%
\section{Preliminaries}
%%%%%%%%%%%%%%%%%%%%%%%%%%%%%%%%%%%%%%%%%%%%%%%%%%%%%%%%%%%%%%%%

\subsection{The minimization problem and optimality
conditions}\label{S:problem}

Given a positive integer $m< n$,
a full rank matrix $A\in\RR^{m\times n}$ and $b\in \Im A$, let us define
\begin{equation}\label{E:affine-space}
\A=\{x\in\reels^n\;|\; Ax=b\}.
\end{equation}
Set $\A_0=\A-\A=\Ker A$.  Of course, $\A_0^\perp=\Im A^T$ where $A^T$ is the
transpose of $A$.  Let $C$ be a nonempty, open and convex subset of $\RR^n$,
and
$f:\reels^n\to \reels$ a ${\cal C}^1$ function. Consider the constrained
minimization problem
$$
\inf \{f(x)\;|\;x\in \overline{C},\; Ax=b\}. \leqno (P)
$$
The set of optimal solutions of
$(P)$ is denoted by $S(P)$.  We call $f$ the {\em objective
function} of $(P)$. The {\em feasible set}
of $(P)$ is given by $
\overline{\F}=\{x\in\RR^n\;|\;x\in \overline{C},\;
Ax=b\}=\overline{C}\cap \A,$
and $\F$ stands for the {\em relative interior} of
$\overline{\F}$, that is
\begin{equation}\label{E:feasible}
\F=\ri \overline{\F}=\{x\in\RR^n\;|\;x\in C,\;
Ax=b\}=C\cap\A.
\end{equation}
Throughout this article, we assume that
\begin{equation}\label{E:hypof}
\F\neq\emptyset .
%{\rm (iii)}&\nabla f \hbox{ is locally Lipschitz continuous on } \R^n.\\
%\end{array}
%\right.
\end{equation}
It is well known that a necessary  condition for $a$ to be
locally minimal for $f$ over $\overline{\F}$ is \( ({\cal O}):\:-\nabla 
f(a)\in N_{\overline{\F}}(a)\), where $N_{\overline{\F}}(x)=\{\nu \in
\reels^n\;|\; \forall
y\in\overline{\F},\; \langle y-x,\nu\rangle \leq 0\}$ is the {\em normal
cone} to $\overline{\F}$ at $x\in\overline{\F}$
($N_{\overline{\F}}(x)=\emptyset$ when
$x\notin\overline{\F}$); see for instance \cite[Theorem
6.12]{RoW98}.  By \cite[Corollary 23.8.1]{Roc70}, \(
N_{\overline{\F}}(x)=N_{\overline{C} \cap
\A}(x)=N_{\overline{C}}(x)+N_{\A}(x)=N_{\overline{C}}(x)+\A_0^\perp,\) for
all \( x\in\overline{\F}\). Therefore, the necessary optimality condition
for $a\in \overline{\F}$ is
\begin{equation}\label{E:optcond}
-\nabla f(a)\in N_{\overline{C}}(a)+ \A_0^\perp.
\end{equation}
If  $f$ is convex then this condition  is also sufficient for
$a\in\overline{\F}$ to be in $S(P)$.

%%%%%%%%%%%%%%%%%%%%%%%%%%%%%%%%%%%%%%%%%%%%%%%%%%%%%%%%%%%%%%%%%%%%%%%%%%%%

%%%%%%%%%%%%            GRADIENT FLOWS          %%%%%%%%%%%%

%%%%%%%%%%%%%%%%%%%%%%%%%%%%%%%%%%%%%%%%%%%%%%%%%%%%%%%%%%%%%%%%%%%%%%%%%%%%

\subsection{Riemannian gradient flows on the relative interior of the
feasible set}

Let $M$ be a smooth manifold. The tangent space to $M$ at $x\in M$ is denoted by $T_x M$. If
$f:M\mapsto \R$ is a ${\cal C}^1$ function then $df(x)$ denotes its
differential or tangent  map  $df(x):T_xM\to \RR $ at $x\in M$. A
${\cal C}^k$ metric on $M$, $k\geq 0$, is a family of scalar products
$(\cdot ,\cdot)_x$  on each $T_x M$, $x\in M$, such that $(\cdot
,\cdot)_x$ depends in a ${\cal C}^k$ way on $x$. The couple $M, (\cdot
,\cdot)_x$ is called a ${\cal C}^k$ Riemannian manifold. This structure
permits to identify $T_x M$ with its dual, i.e. the cotangent space $T_x M^*$,
 and thus to define a notion of gradient vector. Indeed, given $f$ in $M$, the gradient of $f$ is denoted by
$\nabla_{_{(\cdot,\cdot)}}\:f$ and is uniquely determined by the following
conditions: \\
\hspace*{.3cm}(g$_1$)  tangency condition:  for all $x\in M$, $
\nabla_{_{(\cdot,\cdot)}}\: f (x) \in T_x M^*\simeq T_xM,$ \\
\hspace*{.3cm}(g$_2$) dualility condition:  for all $x\in M$, $v\in
T_x M$, $
df(x)(v)=(\nabla_{_{(\cdot,\cdot)}}\:f(x),v)_x.$ \\
%If $N$ is a
%submanifold  of $M$ then $T_x N \subset T_x M$ for all $x \in
%N$ and the metric $(\cdot,\cdot)_x$ on $M$ induces a
%metric on $N$ by restriction. 
We refer the reader to \cite{DoC,Lan95} for further details.

Let us return to the minimization problem $(P)$. Since $C$ is
open, we can take $M=C$ 
with the usual identification  $T_xC \simeq \RR^n$ for every $x\in C$. Given
a continuous mapping $H:C\to\Sn$, the
 metric defined by
\begin{equation}\label{E:metric}
\forall x \in C,\; \forall u,v\in  \R^n,\; (u,v)_x^{H}=\langle
H(x)u,v\rangle,
\end{equation}
endows $C$ with a ${\cal C}^0$ Riemannian structure. The corresponding
Riemannian gradient vector field of the objective function $f$
restricted to $C$, which we denote by $\gr f_{|_C}$, is given by
\begin{equation}\label{E:riemann}
\gr f_{|_C}(x)=H(x)^{-1}\nabla f(x).
\end{equation}
Next, take $N=\F=C\cap \A$, which is a smooth submanifold of $C$
with $T_x\F \simeq \A_0$ for each $x\in\F$. Definition
(\ref{E:metric}) induces a metric on $\F $ for which the
gradient of the restriction $f_{|_\F}$ is denoted by $\gr
f_{|_\F}$. Conditions $(g_1)$
and $(g_2)$ imply that for all \( x \in \F\)
\begin{equation}\label{E:gradH}
\gr f_{|_\F} (x)=P_x H(x)^{-1} \nabla f(x),
\end{equation}
where, given $x\in C$, $P_x:\R^n\to \A_0$ is the $(\cdot,\cdot)_x^H$-orthogonal
projection onto the linear subspace $\A_0$. Since  $A$ has full rank,
it is easy to see  that
\begin{equation}\label{E:projH}
P_x=I -H(x)^{-1}A^T(AH(x)^{-1}A^T )^{-1}A,
\end{equation}
and we conclude that for all \( x \in \F\)
\begin{equation}\label{E:gradH-expl}
\gr f_{|_\F}(x)=H(x)^{-1}[I-A^T(AH(x)^{-1}A^T
)^{-1}AH(x)^{-1}]\nabla f(x).
\end{equation}
%It is usual to say that $a\in\F$ is a {\em critical point} of $f_{|_\F}$
%when $\gr f_{|_\F}(a)=0$. Remark that $\gr f_{|_\F}(a)=0$ iff
%$\nabla f(a)\in \A_0^\perp$, where  the latter is exactly the
%optimality condition (\ref{E:optcond})  when  $a$ belongs to $\F$ (recall
%that $N_{\overline{C}}(x)=\{0\}$ when $x\in C$ because $C$ is
%open).

Given $x\in \F$, the vector $-\gr
f_{|_\F}(x)$ can be interpreted as that
direction in $\A_0$  such that $f$ decreases the most steeply  at $x$ with
respect to the metric $(\cdot,\cdot)_x^{H}$. The {\em steepest descent
method}  for the (local) minimization of $f$ on the Riemannian manifold $\F,
(\cdot,\cdot)^H_x$ consists in finding the solution trajectory $x(t)$ of the
  vector field $-\gr f_{|_\F}$ with initial
condition $x^0\in\F$:
\begin{equation}\label{E:steepestD}
\left\{
\begin{array}{l}
\dot{x}+\gr f_{|_\F}(x)=0,\\
x(0)=x^0\in\F.
\end{array}\right.
\end{equation}
%When $x^0$ is a critical point of $f_{|_\F}$, the solution to
%(\ref{E:steepestD}) is stationary, i.e. $x(t)\equiv x^0$. Otherwise, it is
%natural to expect $x(t)$ to approach the set of local minima of $f$ on
%$\overline{\F}$.

%%%%%%%%%%%%%%%%%%%%%%%%%%%%%%%%%%%%%%%%%%%%%%%%%%%%%%%%%%%%%%%%%%%%%%%%%%%%%%

%%%%%%%%                LEGENDRE GRADIENT FLOWS          %%%%%%%%%%%%%%%%%%

%%%%%%%%%%%%%%%%%%%%%%%%%%%%%%%%%%%%%%%%%%%%%%%%%%%%%%%%%%%%%%%%%%%%%%%%%%%%%%%
\section{Legendre gradient flows in constrained 
optimization}\label{S:gradientflow}

\subsection{Liapounov functionals, variational inequalities and Hessian
metrics}\label{S:Poincare}

This section is intended to motivate the particular class of
Riemannian metrics that is studied in this paper in view of the
asymptotic convergence of the solution to (\ref{E:steepestD}). 

Let us consider  the minimization problem $(P)$ and assume that
$C$ is endowed  with some Riemannian metric $\me^H_x$ as defined
in (\ref{E:metric}). Recall that \( V:\F\mapsto \R \) is  a {\em
Liapounov functional} for the vector field $-\nabla_H f_{|_\F}$
if $\forall x\in \F$, $(-\gr f_{|_\F}(x), \gr V(x))^H _x\leq 0$.
If $x(t)$ is a solution to  (\ref{E:steepestD}), this implies that $ t\mapsto V(x(t))$ is nonincreasing.  Although \( f_{|_\F} \) is indeed a Liapounov
functional for $-\nabla_H f_{|_\F}$, this does not ensure the
convergence of $x(t)$ (see for instance the counterexample of
Palis-De Melo \cite{PalDem82} in the Euclidean case).

Suppose that the objective function $f$ is convex.  For simplicity, we also
assume that $A=0$ so that $\F=C$. In the framework of convex minimization,
the set of minimizers of \( f \) over $\overline{C}$, denoted by
\( \argmin\: _{\overline{C}}\: f \), is characterized in variational terms
as follows:
\begin{equation}
\label{varEuc}
\begin{array}{lcl}
a\in \argmin\: _{\overline{C}}\: f & \Leftrightarrow  & \forall x\in
\overline{C},\: \langle \nabla f(x),x-a\rangle \geq 0.\\
\end{array}
\end{equation}
 Setting
   \(q_{a}(x)=\frac{1}{2}|x-a|^{2} \)  for all $a\in\argmin\: _{\overline{C}}$,
 one observes that $\nabla
q_a(x)=x-a$ and thus,  by (\ref{varEuc}),   $q_{a}$ is a Liapounov functional for
$-\nabla f$. This key property allows one to establish the
asymptotic convergence as $t\to+\infty$ of the  corresponding
steepest descent trajectories;  see \cite{Bru74} for more details
in a very general non-smooth setting. To use the same kind of
arguments in a non Euclidean context, observe that by
(\ref{E:riemann}) together with the continuity of $ \nabla f$,
the following
variational  Riemannian characterization holds
\begin{equation}
\label{varRie}
\begin{array}{lcl}
a\in \argmin\: _{\overline{C}}\: f & \Leftrightarrow  & \forall x\in C,\: (
\nabla_{H} f(x),x-a)^H _x \geq 0.\\
\end{array}
\end{equation}
We are thus naturally led to the problem of {\it finding the Riemannian 
metrics
on $C$  for which the mappings $C\ni x\mapsto x-y\in\RR^n$, $y\in C$, are
gradient vector fields}. The next result gives a characterization of such metrics: they are induced by Hessian of strictly convex functions.
\begin{theorem}\label{T:Poincare}
Assume that $H\in{\cal
C}^1(C;\Sn)$, or in other words that $\me^H_x$ is a ${\cal C}^1$ metric. The family of vector fields $\{V^y: C\ni x\mapsto x-y\in\RR^n\},\:y\in C$ 
is a family of $\me^H$-gradient vector fields if and only if 
there exists a strictly convex function
\(h\in {\cal C}^3(C) \)  such that \(\forall x\in C\), \( H(x)=\nabla
^{2}h(x) \). Besides, defining
\( D_{h}:C\times C\mapsto \R \) by
\begin{equation}
\label{E:BRE}
D_{h}(y,x)=h(y)-h(x)-\langle \nabla h(x),x-y\rangle,
\end{equation}
we obtain \(
\nabla _{H}D_{h}(y,\cdot )(x)=x-y.\)
\end{theorem}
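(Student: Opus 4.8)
The plan is to read ``$V^y$ is a $\me^H$-gradient vector field'' through formula (\ref{E:riemann}), noting that here $A=0$, so $\F=C$ and no projection intervenes: $V^y=\nabla_H g_y$ for some potential $g_y$ if and only if $\nabla g_y(x)=H(x)(x-y)$ for all $x$, i.e. the Euclidean vector field $W^y:x\mapsto H(x)(x-y)$ is a gradient field. Since $C$ is open and convex, hence simply connected, the Poincar\'e lemma applies: a $\mathcal{C}^1$ field on $C$ is a Euclidean gradient if and only if its Jacobian is symmetric. Thus the whole statement reduces to deciding for which $H$ the Jacobian of $W^y$ is symmetric for \emph{every} $y\in C$, which is where the name of the theorem comes from.

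For the ``only if'' direction I would first compute this Jacobian. Writing $W^y_k(x)=\sum_l H_{kl}(x)(x_l-y_l)$ gives $\partial_j W^y_k(x)=\sum_l \partial_j H_{kl}(x)\,(x_l-y_l)+H_{kj}(x)$. Because $H$ is symmetric the terms $H_{kj}=H_{jk}$ cancel in the difference $\partial_j W^y_k-\partial_k W^y_j$, so symmetry of the Jacobian is equivalent to $\sum_l\big(\partial_j H_{kl}(x)-\partial_k H_{jl}(x)\big)(x_l-y_l)=0$ for all $j,k$. The key step is then to let $y$ vary: fixing $x\in C$, the vector $x-y$ ranges over the nonempty open set $x-C$ as $y$ ranges over $C$, so this linear form in $x-y$ must vanish identically, forcing the pointwise conditions $\partial_j H_{kl}(x)=\partial_k H_{jl}(x)$ for all indices and all $x\in C$.

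It remains to integrate these conditions. Combined with the built-in symmetry $H_{kl}=H_{lk}$, they say that $T_{jkl}:=\partial_j H_{kl}$ is symmetric in all three indices. I would then integrate twice via the Poincar\'e lemma: for each fixed $k$ the field $x\mapsto(H_{k1}(x),\dots,H_{kn}(x))$ has symmetric Jacobian, by the three-index symmetry just obtained, hence equals $\nabla G_k$ for some $G_k$; assembling $G=(G_1,\dots,G_n)$ yields a field with $[DG]_{ki}=H_{ki}$, which is symmetric, so a second application of Poincar\'e gives $G=\nabla h$ and therefore $\nabla^2 h=H$. Positive definiteness of $H(x)\in\Sn$ makes $h$ strictly convex, and the regularity bookkeeping $H\in\mathcal{C}^1\Rightarrow G\in\mathcal{C}^2\Rightarrow h\in\mathcal{C}^3$ gives $h\in\mathcal{C}^3(C)$.

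The converse and the final identity are a single direct computation. Given strictly convex $h\in\mathcal{C}^3(C)$ with $H=\nabla^2 h$, differentiating $D_h(y,\cdot)$ in its second variable makes the two $\nabla h$ contributions cancel and leaves $\nabla_x D_h(y,x)=\nabla^2 h(x)(x-y)=H(x)(x-y)$; applying $H(x)^{-1}$ as in (\ref{E:riemann}) yields $\nabla_H D_h(y,\cdot)(x)=x-y=V^y(x)$. This at once exhibits each $V^y$ as a $\me^H$-gradient field, with explicit potential $D_h(y,\cdot)$, and proves the displayed formula. I expect the only real obstacle to be the ``only if'' direction: correctly upgrading the one-parameter family of integrability conditions to the pointwise three-index symmetry, and then carrying out the two successive integrations while keeping track of the regularity.
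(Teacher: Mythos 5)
Your proposal is correct and follows essentially the same route as the paper: reduction via (\ref{E:riemann}) to the question of when $x\mapsto H(x)(x-y)$ is a Euclidean gradient, the Poincar\'e lemma on the convex set $C$, variation of $y$ to upgrade the family of integrability conditions to the pointwise symmetry $\partial_j H_{kl}=\partial_k H_{jl}$, a two-step integration producing $h\in{\cal C}^3(C)$ with $\nabla^2h=H$, and the direct computation $\nabla D_h(y,\cdot)(x)=\nabla^2h(x)(x-y)$ for the converse and the displayed identity. The only cosmetic difference is that the paper carries out the two integrations in the language of closed and exact $1$-forms (first $\beta^i=\sum_k H_{ik}\,dx_k$, then $\omega=\sum_k\phi_k\,dx_k$), whereas you phrase them as vector fields with symmetric Jacobians.
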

\begin{proof} The set of metrics complying with the ``gradient''requirement is denoted by \({\cal M} \), that
is, \( \me^H_x\in {\cal M} \Leftrightarrow H\in{\cal
C}^1(C;\Sn)\hbox{ and } \forall y\in C,\: \exists \varphi _{y}\in
{\cal C}^{1}(C;\R),\: \gr \varphi _{y}(x)=x-y.\). Let $(x_1,..,x_n)$ denote the canonical coordinates
of $\R^n$ and  write \(\sum _{i,j}H_{ij}(x)dx_{i}dx_{j} \) for
\(\me^H_x\). By (\ref{E:riemann}), the mappings \( x\mapsto
x-y\), \( y\in C \), define a family of \( \me^H_x\) gradients
iff \( k_{y}:x\mapsto H(x)(x-y)\), \( y\in C \), is a family of
Euclidean  gradients. Setting  \( \alpha ^{y}(x)=\langle
k_{y}(x),\cdot \rangle  \), \( x, y\in C \), the problem amounts
to find necessary (and sufficient) conditions under which the \(
1 \)-forms \( \alpha ^{y} \) are all exact. Let $y\in C$. Since
\( C \) is convex, the Poincar\'e lemma \cite[Theorem V.4.1]{Lan95} states that \( \alpha
^{y} \) is exact iff it is closed. In canonical coordinates we
have \( \alpha ^{y}(x)=\sum_{i} \left(
\sum_{k}H_{ik}(x)(x_{k}-y_{k})\right) dx_{i},\; x\in C,\) and
therefore \( \alpha ^{y} \) is exact iff for all \(i,j\in
\{1,..,n\}\) we have \( \frac{\partial }{\partial
x_{j}}\sum_{k}H_{ik}(x)(x_{k}-y_{k}) =\frac{\partial }{\partial
x_{i}}\sum_{k}H_{jk}(x)(x_{k}-y_{k}),\) which is equivalent to \(
\sum_{k}\frac{\partial }{\partial
x_{j}}H_{ik}(x)(x_{k}-y_{k})+H_{ij}(x)=\sum_{k}\frac{\partial
}{\partial x_{i}}H_{jk}(x)(x_{k}-y_{k})+H_{ji}(x).\) Since \(
H_{ij}(x)=H_{ji}(x) \), this gives the following condition: \(
\sum_{k}\frac{\partial }{\partial
x_{j}}H_{ik}(x)(x_{k}-y_{k})=\sum_{k}\frac{\partial }{\partial
x_{i}}H_{jk}(x)(x_{k}-y_{k}),\:\forall i,j\in \{1,..,n\}.\) If we
set \( V_{x}=(\frac{\partial }{\partial
x_{j}}H_{i1}(x),..,\frac{\partial } {\partial
x_{j}}H_{in}(x))^{T} \) and \( W_{x}=(\frac{\partial }{\partial
x_{i}}H_{j1}(x) ,..,\frac{\partial }{\partial
x_{i}}H_{jn}(x))^{T} \), the latter can be rewritten \( \langle
V_{x}-W_{x},x-y\rangle =0\), which must hold for all $(x,y)\in
C\times C$. Fix $x\in C$. Let \( \epsilon _{x}>0 \) be such that
the open ball of center \( x \) with radius \( \epsilon _{x} \)
is contained in \( C \). For every \( \nu  \) such that \(
|\nu|=1 \), take \( y= x+\epsilon_{x}/2 \nu  \) to obtain that \(
\langle V_{x}-W_{x},\nu \rangle =0\). Consequently, \(
V_{x}=W_{x} \) for all \( x\in C \). Therefore, \(\me^H_x\in{\cal
M}\) iff
\begin{equation}
\label{partialhessian}
\forall x \in C,\:\forall i,j,k \in \{1,..,n\},\:\frac{\partial}
{\partial x_i} H_{jk}(x)=\frac{\partial}
{\partial x_j} H_{ik}(x).
\end{equation}
\begin{lemma}\label{L:Poincare}
If \(H :C\mapsto \Sn \) is a differentiable mapping satisfying {\rm
(\ref{partialhessian})}, then there exists \( h\in {\cal C}^{3}(C) \)   such
that \(\forall x \in C \), \( H (x)=\nabla^2 h(x) \). In particular, $h$ is
strictly convex.
\end{lemma}
\begin{proof}[of Lemma \ref{L:Poincare}.] For all \( i\in \{1,..,n\} \),
set \( \beta ^{i}=\sum _{k}H _{ik}dx_{k} \). By (\ref{partialhessian}), \(
\beta ^{i} \) is closed and therefore exact. Let \( \phi _{i} :C\mapsto \R\)
be such that \( d\phi _{i}=\beta ^{i} \) on \( C \), and set \( \omega =\sum
_{k}\phi _{k}dx_{k} \). We have that \( \frac{\partial }{\partial x_{j}}\phi
_{i}(x)=H_{ij}(x)
=H_{ji}(x)=\frac{\partial }{\partial x_{i}}\phi _{j}(x)\), \(\forall x\in
C\). This proves that \( \omega  \) is closed, and therefore there exists
$ h\in {\cal C}^{2}(C,\R)$ such that \( dh=\omega  \). To conclude we just
have to notice
that \( \frac{\partial }{\partial x_{i}}h(x)=\phi _{i}, \)
and thus \( \frac{\partial^2 h}{\partial x_{j}\partial x_{i}}(x)=H _{ji}(x)
,\:\forall x\in C\). \end{proof} 

To finish the proof, remark that taking $\varphi_y=D_h(y,\cdot)$ with $D_h$
being defined by  (\ref{E:BRE}), we obtain \(\nabla \varphi_y(x)=\nabla^2
h(x)(x-y)\), and therefore \( \nabla _{H}\varphi_y(x)=x-y \) in virtue of
(\ref{E:riemann}). 
\end{proof}
 
\begin{remark}
{\rm (a) In the theory of Bregman proximal methods for convex optimization,
the distance-like function $D_h$ defined  by  {\rm (\ref{E:BRE})} is called
the $D$-{\em function} of  $h$.   Theorem {\rm \ref{T:Poincare}} is a new
 and surprising motivation for the introduction of $D_h$ in relation with variational
inequality problems. (b) For a geometrical approach to Hessian Riemannian structures the reader
is referred to the recent work of Duistermaat {\rm \cite{Dui01}}.}
\end{remark}

Theorem \ref{T:Poincare} suggests to endow $C$ with a Riemannian
structure associated  with the Hessian $H=\nabla^2h$ of a
strictly convex  function $h : C\mapsto \R$. As we will see under
some additional conditions, the $D$-function of $h$ is essential
to establish the asymptotic convergence of the trajectory. On the
other hand, if it is possible to replace $h$ by a sufficiently
smooth strictly convex function $h':C'\mapsto \R$ with
$C'\supset\supset C$ and $h'_{|_C}=h$, then the gradient flows
for $h$ and $h'$ are the same  on $C$ but the  steepest descent
trajectories associated with the latter may leave the feasible
set of $(P)$ and in general they will not converge to a solution
of  $(P)$. We shall see that to avoid this drawback  it is
sufficient  to require that $|\nabla h(x^j)|\rightarrow +\infty$
for all sequences $(x^j)$ in $C$ converging to a boundary point
of  $C$. This may be interpreted as a sort of {\em barrier technique}, a classical strategy to enforce feasibility in
optimization theory.

%%%%%%%%%%%%%%%%%%%%%%%%%%%%%%%%%%%%%%%%%%%%%%%%%%%%%%%%%%%%%%%%%%%%%%%%%%%%%%
\subsection{Legendre type functions and the $\HS$ dynamical
system}\label{S:legendre} In the
sequel, we adopt the standard notations of convex analysis
theory; see \cite{Roc70}. Given a closed convex subset $S$ of
$\R^n$, we say that an extended-real-valued function $g:S\mapsto
\R \cup \{+\infty \}$ belongs to the class $\Gamma_0(S)$ when $g$
is lower semicontinuous, proper ($g\not\equiv+\infty$) and
convex. For such a function $g\in\Gamma_0(S)$, its {\em effective
domain}
is defined by $\dom g=\{x\in S\;|\; g(x)<+\infty\}$. When
$g\in\Gamma_0(\R^n)$ its {\em
Legendre-Fenchel conjugate} is given by
$g^*(y)=\sup\{\<x,y\ra-g(x)\;|\;x\in\RR^n\}$, and its {\em
subdifferential}  is the set-valued mapping $\partial g:\RR^n\to
{\cal P}(\RR^n)$ given by $\partial g (x)=\{y\in\RR^n\;|\;
\forall z\in \RR^n, \: f(x)+\<y,z-x\ra\leq f(z)\}$. We set
$\dom\partial g=\{x\in\RR^n\;|\;\partial g(x)\neq\emptyset\}$.

\begin{definition}\label{D:legendre}{\rm  \cite[Chapter
26]{Roc70}} A function $h\in \Gamma_0(\RR^n)$ is
called:\\
{\rm (i)} {\rm essentially smooth}, if $h$ is differentiable on
$\inte\dom h$, with moreover $|\nabla h (x^j)| \rightarrow
+\infty$ for every sequence $(x^j)\subset\inte \dom h$ converging
to a boundary point of $\dom h$ as $j\rightarrow +\infty$;\\
{\rm (ii)}  {\rm of Legendre type}  if $h$ is essentially smooth
and strictly convex on  $\inte\dom h$.
\end{definition}

Remark that by \cite[Theorem 26.1]{Roc70}, $h\in \Gamma_0(\RR^n)$
is essentially smooth iff  $\partial h(x)=\{\nabla h(x)\}$ if
$x\in \inte \dom h$ and $\partial h(x)=\emptyset$ otherwise; in
particular, $\dom\partial h=\inte\dom h$.
%We record also the following formula: $h^*(y)=\<(\nabla
%h)^{-1}(y),y\ra-h((\nabla h)^{-1}(y)).$

Motivated by the results of section \ref{S:Poincare},
we define a Riemannian structure on $C$ by introducing a function
$h\in \Gamma_0(\RR^n)$ such that:
$$
\left\{
\begin{array}{cl}
{\rm (i)}& h\hbox{ is of Legendre type with } \inte\dom h=C.\\
{\rm (ii)}& h_{|_C}\in {\cal C}^2(C;\RR) \hbox{ and } \forall x\in C, \nabla
^2 h(x)\in \Sn.\\
{\rm (iii)}& \hbox{The mapping } C\ni x \mapsto \nabla ^2 h(x)
\hbox{ is locally Lipschitz continuous}.
\end{array}
\right. \leqno (H_0)
$$
Here and subsequently, we take $H=\nabla^2 h$ with $h$  satisfying
$(H_0)$. The Hessian mapping $ C\ni x \mapsto H(x)$ endows $C$
with the (locally Lipschitz continuous) Riemannian metric
\begin{equation}\label{E:metrich}
\forall x \in C,\: \forall u,v\in  \R^n,\: (u,v)_x ^H=\langle
H(x)u,v\rangle=\langle \nabla^2 h(x)u,v\rangle,
\end{equation}
and we say that $(\cdot,\cdot)_x^{H}$ is the {\it Legendre metric}
on $C$ induced by the Legendre type function $h$, which also
defines  a metric on $\F=C\cap \A $ by restriction. In addition
to $f\in{\cal C}^1(\R^n)$, we suppose that the objective function
satisfies
\begin{equation}\label{E:locLip}
\nabla f \hbox{ is locally Lipschitz continuous
on } \R^n.
\end{equation}
The corresponding steepest descent method in the manifold $\F,
(\cdot,\cdot)^H_x$, which we refer to as $\HS$ for short, is then
the following continuous dynamical system
$$  \left\{ \begin{array}{l}
\dot{x}(t)+\gr f_{|_\F}(x(t))=0,\: t\in (T_m, T_M),\\
x(0)=x^0\in\F,
\end{array}\right.
\leqno (H\hbox{-}SD)
$$
 with
$H=\nabla^2 h$ and where $-\infty\leq T_m<0<T_M\leq +\infty$ define the interval corresponding
to the unique  maximal solution of $\HS$.
 Given an initial condition $x^0\in\F$, we
shall say that $(H\hbox{-}SD)$
is  {\em well-posed}  when its maximal solution satisfies
$T_M=+\infty$. In section \ref{S:wellposed} we will give   some  sufficient
conditions ensuring the well-posedness of $\HS$.

\subsection{Differential inclusion formulation of $\HS$ and some
consequences}
It is easily seen that the solution $x(t)$
of $(H\hbox{-}SD)$ satisfies:
\begin{equation}\label{E:cont-prox}
\left\{
\begin{array}{rcl}
\displaystyle{\frac{d}{dt}\nabla h(x(t))+\nabla f(x(t))}&\in&
\A_0^\perp\textrm{ on } (T_m,T_M),\\
\displaystyle{x(t)}&\in& \F\:\: \textrm{ on }
(T_m,T_M),\\x(0)&=&x^0\in\F.
\end{array}
\right.
\end{equation}
This differential inclusion problem  makes sense even when $x\in
W^{1,1}_{loc}(T_m,T_M;\R^n)$,  the inclusions being satisfied
almost everywhere on $(T_m,T_M)$. Actually, the following result
establishes that $\HS$ and (\ref{E:cont-prox}) describe the same
trajectory.
\begin{proposition}\label{difinc}
Let $x\in W^{1,1}_{loc}(T_m,T_M;\R^n)$. Then, $x$ is a solution
of {\rm (\ref{E:cont-prox})} iff $x$ is the solution of
$(H\hbox{-}SD)$. In particular, {\rm (\ref{E:cont-prox})} admits a unique
solution of class ${\cal C}^1$.
\end{proposition}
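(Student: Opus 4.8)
The plan is to establish the equivalence in two directions, relying on the explicit formula (\ref{E:gradH-expl}) for the Riemannian gradient and on the projection identity (\ref{E:projH}). First I would show that a $\mathcal{C}^1$ solution of $\HS$ satisfies the differential inclusion (\ref{E:cont-prox}). Starting from $\dot x(t)=-\gr f_{|_\F}(x(t))$, I apply the chain rule to $t\mapsto \nabla h(x(t))$, which is legitimate since $h$ is $\mathcal{C}^2$ on $C$ and $x(t)\in\F\subset C$; this gives $\frac{d}{dt}\nabla h(x(t))=\nabla^2 h(x(t))\,\dot x(t)=H(x(t))\,\dot x(t)$. Substituting $\dot x(t)=-H(x)^{-1}P_x\nabla f(x)$ from (\ref{E:gradH}), the Hessian cancels and I obtain $\frac{d}{dt}\nabla h(x(t))=-P_x\nabla f(x(t))$, so that $\frac{d}{dt}\nabla h(x(t))+\nabla f(x(t))=(I-P_x)\nabla f(x(t))$. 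The key computation is then to verify from (\ref{E:projH}) that $\mathrm{Im}(I-P_x)\subseteq \A_0^\perp=\Im A^T$: indeed $I-P_x=H(x)^{-1}A^T(AH(x)^{-1}A^T)^{-1}A$ has range contained in $\Im A^T$ once one checks that $H(x)^{-1}A^T w\in\Im A^T$ for the relevant $w$. Together with $x(t)\in\F$ and the initial condition, this yields (\ref{E:cont-prox}).

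For the converse, I would take $x\in W^{1,1}_{loc}$ solving (\ref{E:cont-prox}) and recover $\HS$. The inclusion $x(t)\in\F$ guarantees $Ax(t)=b$, hence differentiating gives $A\dot x(t)=0$, i.e.\ $\dot x(t)\in\A_0$ almost everywhere. Meanwhile the first inclusion says $\frac{d}{dt}\nabla h(x(t))+\nabla f(x(t))\in\A_0^\perp$; using $\frac{d}{dt}\nabla h(x(t))=H(x(t))\dot x(t)$ again, this reads $H(x)\dot x(t)+\nabla f(x)\in\A_0^\perp$, equivalently $\langle H(x)\dot x(t)+\nabla f(x),v\rangle=0$ for all $v\in\A_0$. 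Since $\dot x(t)\in\A_0$ and $\gr f_{|_\F}(x)\in\A_0$, I would argue that $\dot x(t)+\gr f_{|_\F}(x)\in\A_0$ as well, and that the displayed orthogonality forces $\langle H(x)(\dot x+\gr f_{|_\F}(x)),v\rangle=0$ for every $v\in\A_0$; because $(\cdot,\cdot)_x^H$ is a genuine inner product on $\A_0$, this yields $\dot x(t)+\gr f_{|_\F}(x(t))=0$ almost everywhere, which is exactly $\HS$.

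The uniqueness and $\mathcal{C}^1$-regularity claim then follows by a bootstrapping argument. Once $x$ is known to satisfy $\dot x(t)=-\gr f_{|_\F}(x(t))$ almost everywhere with $x$ absolutely continuous, the right-hand side $-\gr f_{|_\F}$ is, by (\ref{E:gradH-expl}), a composition of the locally Lipschitz maps $H(\cdot)^{-1}$ (from hypothesis $(H_0)$(iii) and the smoothness of matrix inversion on $\Sn$) with the locally Lipschitz $\nabla f$ (from (\ref{E:locLip})); hence $t\mapsto \gr f_{|_\F}(x(t))$ is continuous, so $\dot x$ is continuous and $x\in\mathcal{C}^1$. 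Uniqueness is the standard Picard--Lindel\"of conclusion for the locally Lipschitz vector field $-\gr f_{|_\F}$ on the manifold $\F$.

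The main obstacle I anticipate is the careful handling of the projection step, namely verifying cleanly that $I-P_x$ maps into $\A_0^\perp$ and, in the converse direction, that the relation $\langle H(x)\dot x+\nabla f,v\rangle=0$ on $\A_0$ combined with the definition of $P_x$ as the $(\cdot,\cdot)^H_x$-orthogonal projector exactly reproduces $P_xH(x)^{-1}\nabla f=-\dot x$. One must be attentive to the distinction between $\nabla f$ living in all of $\R^n$ and $\gr f_{|_\F}$ living in $\A_0$, and to the fact that the identity only holds after projecting; the algebra is routine but the bookkeeping between $\A_0$ and $\A_0^\perp$ under the metric $H$ is where an error would most likely creep in.
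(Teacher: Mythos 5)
Your converse direction---which is the one the paper actually proves, and the nontrivial one---follows the paper's argument in substance: the a.e.\ chain rule $\frac{d}{dt}\nabla h(x(t))=H(x(t))\dot x(t)$, the fact that $\dot x(t)\in\A_0$ from differentiating $Ax(t)=b$, and the observation that $\dot x+\gr f_{|_\F}(x)$ lies in $\A_0$ while being $(\cdot,\cdot)^H_x$-orthogonal to all of $\A_0$, hence vanishes; the $\mathcal C^1$ bootstrap and the Picard--Lindel\"of uniqueness for the locally Lipschitz field $-\gr f_{|_\F}$ also match the paper. One point should be made explicit there: passing from $\langle H(x)\dot x+\nabla f(x),v\rangle=0$ for $v\in\A_0$ to $\langle H(x)(\dot x+\gr f_{|_\F}(x)),v\rangle=0$ for $v\in\A_0$ is not forced by that orthogonality alone; it uses the duality condition $(g_2)$, namely $(\gr f_{|_\F}(x),v)^H_x=df(x)(v)=\langle\nabla f(x),v\rangle$ for all $v\in\A_0$. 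That is exactly the justification needed, and it is worth writing down.

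The forward direction, as written, contains a genuine error---precisely at the spot you flagged. You substitute $\dot x=-H(x)^{-1}P_x\nabla f(x)$, but (\ref{E:gradH}) reads $\gr f_{|_\F}(x)=P_xH(x)^{-1}\nabla f(x)$; these operators do not commute, so the order matters. With your order you arrive at $\frac{d}{dt}\nabla h(x)+\nabla f(x)=(I-P_x)\nabla f(x)$ and then need $\mathrm{Im}\,(I-P_x)\subseteq\A_0^\perp$, i.e.\ $H(x)^{-1}A^Tw\in\Im A^T$. That check fails: by (\ref{E:projH}), $\mathrm{Im}\,(I-P_x)=H(x)^{-1}\A_0^\perp$, the $(\cdot,\cdot)^H_x$-orthogonal complement of $\A_0$, which differs from $\A_0^\perp$ unless $H(x)$ happens to preserve $\Im A^T$. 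The repair is mechanical. With the correct order, $H(x)\dot x=-H(x)P_xH(x)^{-1}\nabla f(x)=-P_x^T\nabla f(x)$, so $\frac{d}{dt}\nabla h(x)+\nabla f(x)=(I-P_x^T)\nabla f(x)=A^T(AH(x)^{-1}A^T)^{-1}AH(x)^{-1}\nabla f(x)\in\Im A^T=\A_0^\perp$. Cleaner still, and matrix-free: for every $v\in\A_0$, the condition $(g_2)$ gives $\langle H(x)\dot x+\nabla f(x),v\rangle=-(\gr f_{|_\F}(x),v)^H_x+df(x)(v)=0$, hence $H(x)\dot x+\nabla f(x)\in\A_0^\perp$ directly. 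Once this bookkeeping is corrected, your proof coincides in approach with the paper's.
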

\begin{proof} Assume that $x$ is a solution of (\ref{E:cont-prox}),
and let $I'$ be the subset of $(T_m,T_M)$ on  which $t\mapsto
(x(t),\nabla h(x(t))$ is derivable. We may assume that $x(t)\in
\F$ and $\frac{d}{dt}\nabla h(x(t))+\nabla f(x(t)) \in \A_0
^{\perp}$, $\forall t\in I'$. Since  $x$ is absolutely
continuous, $\dot x(t)+H(x(t))^{-1}\nabla f((x(t)) \in
H(x(t))^{-1}\A_0 ^{\perp}$ and $\dot x(t)\in \A_0$, $\forall t\in
I'$. But  the orthogonal complement of $\A_0$ with respect to the
inner product $\<H(x)\cdot,\cdot\ra$ is exactly $H(x)^{-1}\A_0
^{\perp}$ when $x\in \F$. It follows that $\dot
x+P_{x}H(x)^{-1}\nabla f(x)=0$ on $I'$. This implies that $x$ is
the ${\cal C}^1$ solution of $(H\hbox{-}SD)$. \end{proof}

%%%%%%%%%%%%%%%%%%%%%%%%%%%%%%%%%%%%%%%%%%%%%%%%%%%%%%%%%%%%%%%%%%%%%%%%%%%%

%%%%%%%%        BREGMAN PROXIMAL METHOD         %%%%%%%%%%%%

%%%%%%%%%%%%%%%%%%%%%%%%%%%%%%%%%%%%%%%%%%%%%%%%%%%%%%%%%%%%%%%%%%%%%%%%%%%%

  Suppose that $f$ is convex.  On account of Proposition
\ref{difinc}, $(H\hbox{-}SD)$ can be interpreted
as a continuous-time model for  a well-known class of iterative
minimization algorithms. In fact, an implicit discretization of
(\ref{E:cont-prox})  yields the following iterative scheme: $
\nabla h(x^{k+1})-\nabla h (x^k)+\mu_k\nabla f (x^{k+1})\in \Im
A^T,\; Ax^{k+1}=b,$ where $\mu_k>0$ is a step-size parameter and
$x^0\in\F$. This is the optimality condition for
\begin{equation}\label{E:BPM}
x^{k+1}\in \argmin \left\{ f(x)+1/\mu_k D_h(x,x^k)\;|\;
Ax=b\right\},
\end{equation}
where  $D_h$ is given by
\begin{equation}\label{E:Dfunction}
D_h (x,y)=h(x)-h(y)-\langle \nabla h(y), x-y\rangle,\:x\in \dom
h,\;y\in \dom \partial h=C.
\end{equation} 
The above algorithm is accordingly called the {\em Bregman proximal
minimization} method; for an insight of its importance in optimization see  for
instance  \cite{CeZ92,ChT93,IuM00,Kiw97b}.
%When $C=\RR^n$ and $h(x)=\frac{1}{2}|x|^2$, (\ref{E:BPM})
%corresponds to
%the {\em proximal point} algorithm introduced by Martinet in
%\cite{Mar70}. With the aim of incorporating constraints,  the
%Euclidean distance is replaced by the $D$-function of a Bregman
%function (see Definition \ref{D:bregman}), and in that case,(\ref{E:BPM}) is referred to as the {\em Bregman proximal
%minimization} method. For more details on this method, see for
%instance \cite{CeZ92,ChT93,IuM00,Kiw97b}. When applied to  dual
%problems, (\ref{E:BPM}) may be viewed as an {\em  augmented
%Lagrangian multiplier} algorithm; see \cite{Eck93,Teb92}.\\

Next, assume  that  $f(x)=\<c,x\ra$ for some $c\in\R^n$. As
already noticed in \cite{BaL89,Fia90,Mac89} for the log-metric
and in \cite{IuS99} for a  fairly general $h$, in this case the
$\HS$ gradient trajectory can be viewed as a {\em central optimal
path}. Indeed, integrating (\ref{E:cont-prox}) over \( [0,t] \)
we obtain $\nabla h(x(t))-\nabla h(x^0)+tc\in \A_0^\perp$. Since
$x(t)\in \A$, it follows that
\begin{equation}\label{E:viscosity}
x(t)\in \argmin \left\{ \< c,x\ra +1/tD_h(x,x^0)\mid Ax=b\right\},
\end{equation}
which corresponds to  the so-called {\em viscosity method} relative to
$g(x)=D_h(x,x^0)$; see
\cite{Att96,ACH97,IuS99} and Corollary \ref{C:selection}. 
% Generally speaking, the viscosity method for
%solving $(P)$ is based on the introduction of a strictly convex
%function $g\in\Gamma_0(\RR^n)$ such that $\overline{\dom
%g}=\overline{C}$, then solve
%\[ \widetilde{x}(\eps)\in \argmin\: \{f(x) +\eps g(x)|Ax=b\},\:
%\eps>0,\] and finally let $\eps\to 0$. When $\dom g=\overline{C}$,
%general conditions ensure that $\widetilde x (\eps)$ converges to
%the unique $x^*\in S(P)$ that minimizes $g$ on $S(P)$; This
%result can be extended to some specific cases where $\dom g=C$;
%see  \cite[Theorem 3.4]{ACH97},  \cite[Theorem 2]{IuS99} and
%Proposition \ref{P:selectionLP}. 
     Remark now that for a linear
objective function, (\ref{E:BPM}) and (\ref{E:viscosity}) are
essentially the same:  the sequence generated by the former
belongs to the optimal path defined by the latter. Indeed,
setting $t_0=0$ and $t_{k+1}=t_k+\mu_k$ for all $k\geq 0$
($\mu_0=0$) and integrating (\ref{E:cont-prox}) over $[t_k,
t_{k+1}]$, we obtain that $x(t_{k+1})$ satisfies the optimality
condition for (\ref{E:BPM}). The following result summarizes the
previous discussion.
\begin{proposition}\label{P:optBreRie}
Assume that  $f$ is linear and that
the corresponding $\HS$ dynamical system is well-posed. Then,
the viscosity optimal path \( \widetilde{x}(\eps) \)  relative to
$g(x)=D_h(x,x^0)$ and the sequence \( (x^{k})\) generated by {\rm
(\ref{E:BPM})}  exist and are unique, with in addition
$\widetilde{x}(\eps)=x(1/\eps)$, $\forall \eps>0$, and
$x^{k}=x(\sum^{k-1}_{l=0}\mu _{l})$, $\forall\ k\geq 1$, where
$x(t)$ is the solution of $\HS$.
\end{proposition}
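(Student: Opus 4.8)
The plan is to turn the integration identities derived just before the statement into a rigorous existence--uniqueness argument, exploiting well-posedness of $\HS$ as the mechanism that supplies the minimizers. First I would fix the meaning of the two objects in the linear case $f(x)=\<c,x\ra$: the viscosity path $\widetilde{x}(\eps)$ is a minimizer of $\Phi_\eps(x):=\<c,x\ra+\eps D_h(x,x^0)$ over $\A=\{x\mid Ax=b\}$, and $x^{k+1}$ is a minimizer of $\<c,x\ra+\mu_k^{-1}D_h(x,x^k)$ over $\A$. Since $D_h(x,x^0)=h(x)-h(x^0)-\<\nabla h(x^0),x-x^0\ra$ has first-argument gradient $\nabla h(x)-\nabla h(x^0)$, the stationarity condition for $\Phi_\eps$ at a point $x\in C=\inte\dom h$, relative to the affine constraint, is $c+\eps(\nabla h(x)-\nabla h(x^0))\in\A_0^\perp$; scaling by $t=1/\eps$ this reads $\nabla h(x)-\nabla h(x^0)+tc\in\A_0^\perp$, precisely the form obtained by integrating (\ref{E:cont-prox}) over $[0,t]$.

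Next I would obtain existence and the viscosity identity simultaneously from well-posedness. By hypothesis $T_M=+\infty$, so the unique ${\cal C}^1$ solution $x(t)$ of $\HS$ (Proposition \ref{difinc}) exists for all $t\geq 0$ and stays in $\F\subset C$. Integrating the differential inclusion over $[0,t]$ gives exactly the stationarity condition above with $\eps=1/t$; since $\Phi_\eps$ is convex on $\dom h$ and $x(t)\in\A$ is feasible and stationary, it is a global minimizer of $\Phi_\eps$ over $\A$. Hence a viscosity minimizer exists and $\widetilde{x}(1/t)=x(t)$, i.e.\ $\widetilde{x}(\eps)=x(1/\eps)$ for every $\eps>0$. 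Uniqueness follows from strict convexity: $h$ is of Legendre type, hence strictly convex on $C$, so $D_h(\cdot,x^0)$ is strictly convex there; if $x^*\neq x(t)$ were a second minimizer in $\A\cap\dom h$, then since $x(t)\in C$ is interior the midpoint $\frac{1}{2}(x(t)+x^*)$ lies in $C\cap\A$ and strict convexity makes $\Phi_\eps$ strictly smaller there, contradicting optimality.

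For the proximal sequence I would argue by induction on $k\geq 1$ with $t_k:=\sum_{l=0}^{k-1}\mu_l$. The base case is $x^1=x(t_1)=x(0)=x^0$ (recall $\mu_0=0$), consistent with the initial datum. Assuming $x^k=x(t_k)$, integrating (\ref{E:cont-prox}) over $[t_k,t_{k+1}]$, with $t_{k+1}-t_k=\mu_k$, yields $\nabla h(x(t_{k+1}))-\nabla h(x(t_k))+\mu_k c\in\A_0^\perp$, which is exactly the optimality condition for the subproblem (\ref{E:BPM}) defining $x^{k+1}$ from $x^k$. Since that subproblem is strictly convex (same Legendre argument as above) its minimizer is unique, so $x^{k+1}=x(t_{k+1})$, completing the induction and yielding both existence/uniqueness of $(x^k)$ and the claimed formula.

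The main obstacle --- and the real content --- is the existence of minimizers, which is not granted by coercivity or compactness in this generality. The device is to reverse the usual order of reasoning: instead of first solving the variational problems and then matching them to the trajectory, I would use global existence of the flow to manufacture the minimizers, reading off that the trajectory value at the relevant time obeys the first-order optimality condition and then invoking convexity to promote stationarity to global optimality, with strict convexity supplying uniqueness. The only point demanding care is that the stationary point produced by the flow must lie in $C$, so that $\nabla h$ is defined and the stationarity condition is meaningful; this is automatic because every $\HS$ trajectory remains in $\F\subset C$.
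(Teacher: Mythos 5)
Your proposal is correct and follows essentially the same route as the paper: the paper's ``proof'' is precisely the discussion preceding the statement, namely integrating the differential inclusion (\ref{E:cont-prox}) over $[0,t]$ and over $[t_k,t_{k+1}]$ and recognizing the resulting relations as the first-order optimality conditions for (\ref{E:viscosity}) and (\ref{E:BPM}). The only difference is one of rigor, not of method: you make explicit the steps the paper leaves implicit --- promoting stationarity at the interior point $x(t)\in\F$ to global optimality over $\A\cap\dom h$ via convexity, and deducing uniqueness from strict convexity of $h$ on $C$ --- which is a worthwhile tightening of the same argument.
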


\begin{remark}
{\rm In order to ensure asymptotic convergence for proximal-type
algorithms, it is usually required that  the step-size parameters
satisfy $\sum \mu_k=+\infty$ . By Proposition {\rm
\ref{P:optBreRie}}, this is necessary for the convergence of {\rm
(\ref{E:BPM})} in the sense that when $\HS$ is well-posed, if
$x^k$ converges to some $x^*\in S(P)$ then either $x^0=x^*$ or
$\sum\mu_k=+\infty$.}
\end{remark}
%  We finish this section with another interesting
%consequence of (\ref{E:cont-prox}). Recall that, by \cite[Theorem
%26.5]{Roc70}, $h$ is of Legendre type iff its Fenchel conjugate
%$h^*$ is  of Legendre type. Moreover, the gradient mapping $\nabla
%h:\inte\dom h \to \inte\dom h^*$ is one-to-one with
%\begin{equation}\label{E:inverse}
%(\nabla h)^{-1}=\nabla h^*.
%\end{equation}
%When $\A_0=\RR^n$, we
%obtain the following differential equation for $y=\nabla h(x)$:
%\begin{equation}\label{E:trivial}
%\dot{y}+\nabla f(\nabla h^*(y))=0.
%\end{equation}
%In general, this equation does not appear to be simpler to study
%than the original one. Nevertheless,  in the specific case of
%$f(x)=\langle c,x\rangle$, it reduces $\HS$ to the trivial
%equation $\dot{y}+c=0$; see  section \ref{S:Examples} for some
%examples. Section \ref{S:legendre-transform} is devoted to the
%general case for which the change of coordinates is given by
%$y=\Pi_{\A_0}\nabla h(x)$,  where
%\begin{equation}\label{E:projA0}
%\Pi_{\A_0}=I-A^T(AA^T)^{-1}A
%\end{equation}
%is the Euclidean orthogonal projection onto $\A_0$.

%%%%%%%%%%%%%%%%%%%%%%%%%%%%%%%%%%%%%%%%%%%%%%%%%%%%%%%%%%%%%%

%%%%%%%% EXAMPLES IN CONVEX PROGRAMMING

%%%%%%%%%%%%%%%%%%%%%%%%%%%%%%%%%%%%%%%%

\section{Global existence, asymptotic analysis and examples}\label{GEAS}

%%%%%%%%%%%%%%%%%%%%%%%%%%%%%%%%%%%%%%%%%%%%%%%%%%%%%%%%%%%%%%%%%%%%%%%%%%%%%

%%%%%%%%%%%%%               WELL-POSEDNESS          %%%%%%%%%%%%%

%%%%%%%%%%%%%%%%%%%%%%%%%%%%%%%%%%%%%%%%%%%%%%%%%%%%%%%%%%%%%%%%%%%%%%%%%%%%%

%%%%%%%%%%%%%%%%%%%%%%%%%%%%%%%%%%%%%%%%%%%%%%%%%%%%%%%%%%%%%%%%%%%%%%%%%%%
\subsection{Well-posedness of $\HS$}\label{S:wellposed}
%%%%%%%%%%%%%%%%%%%%%%%%%%%%%%%%%%%%%%%%%%%%%%%%%%%%%%%%%%%%%%%%%%%%%%%%%%%

  In this section we establish the
well-posedness of $(H\hbox{-}SD)$ (i.e. $T_M=+\infty$)  under three different conditions. In order to
avoid any confusion,  we say that a set $E\subset \RR^n$ is {\em
bounded} when it is so for the usual Euclidean norm
$|y|=\sqrt{\langle y,y\rangle}$. First, we  propose the condition:
$$
\hbox{The lower level set $\{ y\in \overline{\F} \;|\;f(y)\leq
f(x^0)\}$ is bounded.} \leqno (WP_1)
$$
Notice that $(WP_1)$ is weaker than the classical assumption
imposing $f$ to have bounded lower level sets in the $H$ metric
sense. Next, let $D_h$ be the $D$-function of $h$ that is defined
by (\ref{E:Dfunction})   and consider the following condition:
$$
\left\{
\begin{array}{l}
{\rm (i)}\:\:  \dom h=\overline{C} \hbox{  and $\forall a\in
\overline{C}$, $\forall\gamma \in \R$},\: \{y \in \F \:| D_h (a,y)\leq
\gamma\}\hbox{ is bounded}.\\
{\rm (ii)}\:  \hbox{$S(P)\neq\emptyset$  and $f$ is
quasi-convex (i.e. the lower level sets of $f$ are convex)}.
\end{array}
\right. \leqno (WP_2)
$$
When $\overline{\F}$ is unbounded  $(WP_1)$ and $(WP_2)$ involve
some a priori properties on $f$. This is actually not necessary
for the  well-posedness of $\HS$. Consider:
$$
\hbox{$\exists$ $K\geq 0$, $L\in \R$ such that $\forall x\in
C$, } ||H(x)^{-1}||\leq K|x|+L.  \leqno (WP_3)
$$
This property is satisfied by relevant Legendre type functions;
take for instance
(\ref{E:xlogx}).

\begin{theorem}\label{T:existence}
Assume that   {\rm (\ref{E:locLip})} and $(H_0)$
hold and
additionally that either $(WP_1)$,  $(WP_2)$ or $(WP_3)$ is
satisfied. If $\inf _{_{\F}} f>-\infty$ then the dynamical
system $(H$-$SD)$ is well-posed. Consequently,
the mapping $t \mapsto f(x(t))$ is nonincreasing and convergent as
$t\infi$.
\end{theorem}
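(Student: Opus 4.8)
The plan is to combine the classical theory of maximal solutions with the energy structure of the flow and the essential smoothness of $h$. First I would observe that by (\ref{E:gradH-expl}) the vector field $-\gr f_{|_\F}$ is locally Lipschitz on $\F$, since $H=\nabla^2h$ and $\nabla f$ are locally Lipschitz and $H(x)\in\Sn$; hence Cauchy--Lipschitz yields a unique maximal ${\cal C}^1$ solution on $(T_m,T_M)$, and the standard escape lemma asserts that if $T_M<+\infty$ then $x(t)$ eventually leaves every compact subset of $\F$. Next, differentiating $t\mapsto f(x(t))$ and using the duality condition (g$_2$) together with $\dot x\in\A_0=T_x\F$ gives the energy identity $\frac{d}{dt}f(x(t))=-|\gr f_{|_\F}(x(t))|^2_{H}\le 0$, so $f(x(\cdot))$ is nonincreasing and, being bounded below by $\inf_\F f>-\infty$, the quantity $\int_0^{T_M}|\dot x|^2_H\,dt=f(x^0)-\lim f(x(t))$ is finite. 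Once $T_M=+\infty$ is proved, this monotonicity and lower bound immediately yield the convergence of $f(x(t))$, which settles the ``Consequently'' clause.

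The core is to show that the orbit stays in a compact subset of $\F$, i.e. that it is bounded and stays away from the relative boundary of $\overline{\F}$ (taken within $\A$), which is contained in $\bd C\cap\A$. For boundedness I would treat the three hypotheses separately. Under $(WP_1)$ the orbit lies in the bounded level set $\{y\in\overline{\F}\mid f(y)\le f(x^0)\}$. Under $(WP_2)$ I would pick $a\in S(P)$ and use $D_h(a,\cdot)$ as a second Liapounov functional: since $a-x\in\A_0$, (g$_2$) gives $\frac{d}{dt}D_h(a,x(t))=-\langle\nabla f(x(t)),x(t)-a\rangle$, which is $\le 0$ because quasi-convexity of $f$ together with the minimality of $a$ force $\langle\nabla f(x),x-a\rangle\ge0$; hence $D_h(a,x(t))\le D_h(a,x^0)$ and $(WP_2)$(i) gives boundedness. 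Under $(WP_3)$ there is no a priori level bound, so I would instead combine the growth of $\|H(x)^{-1}\|$ with the finite energy: from $|\langle x,\dot x\rangle|\le\|H(x)^{-1}\|^{1/2}|x|\,|\dot x|_H\le(K|x|+L)^{1/2}|x|\,|\dot x|_H$ one obtains, for $u:=1+|x|^2$, an estimate $\frac{d}{dt}u^{1/4}\le c\,|\dot x|_H$, and integrating against the finite $L^2_H$-norm of $\dot x$ over the finite interval $[0,T_M)$ bounds $|x(t)|$.

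The main obstacle is to exclude a finite-time approach to the boundary. Here I would exploit the differential inclusion (\ref{E:cont-prox}): letting $Q$ denote the Euclidean orthogonal projection onto $\A_0$ and applying it to $\frac{d}{dt}\nabla h(x)+\nabla f(x)\in\A_0^\perp$ annihilates the right-hand side, so $\frac{d}{dt}Q\nabla h(x(t))=-Q\nabla f(x(t))$; integrating over $[0,t]$ and using that the now bounded orbit keeps $\nabla f(x(\cdot))$ bounded on the finite interval $[0,T_M)$ shows that $Q\nabla h(x(t))$ stays bounded. On the other hand, if $x(t_k)\to\bar x\in\bd C\cap\A$, essential smoothness gives $|\nabla h(x(t_k))|\to+\infty$ with the normalized gradients accumulating at an outer normal $\nu$ to $C$ at $\bar x$; since $\F\neq\emptyset$ forbids such a normal from lying in $\A_0^\perp$ (otherwise the supporting hyperplane would contain $\F$, contradicting the openness of $C$), one has $Q\nu\neq0$ and therefore $|Q\nabla h(x(t_k))|\to+\infty$, contradicting the bound just obtained. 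Thus the orbit cannot reach the relative boundary; together with boundedness it remains in a compact subset of $\F$, the escape lemma forces $T_M=+\infty$, and the convergence of $f(x(t))$ follows. I expect the verification that the gradients align asymptotically with an outer normal, equivalently that the restriction of $h$ to $\A$ is again of Legendre type, to be the delicate technical point.
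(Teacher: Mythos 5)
Your proposal is correct and follows essentially the same route as the paper's proof: the same energy identity $\frac{d}{dt}f(x)+\langle H(x)\dot x,\dot x\rangle=0$, the same Liapounov arguments for boundedness ($f$ under $(WP_1)$, $D_h(a,\cdot)$ with quasi-convexity under $(WP_2)$, Cauchy--Schwarz plus finite energy under $(WP_3)$, where your $u^{1/4}$ substitution merely replaces the paper's Gronwall step), and the same boundary-exclusion argument contrasting the boundedness of $\Pi_{\A_0}\nabla h(x(t))$, obtained by projecting the differential inclusion onto $\A_0$ and integrating, with the blow-up forced by essential smoothness together with $N_{\overline{C}}(\bar x)\cap\A_0^\perp=\{0\}$ (the paper tests against the single direction $\nu_0=\Pi_{\A_0}\nu$ rather than projecting the whole vector, which is the same computation). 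The one step you flag as delicate is precisely the paper's Lemma \ref{L:normal}, and it is disposed of in two lines: monotonicity of $\nabla h$ gives $\langle \nabla h(x^j)-\nabla h(y),x^j-y\rangle\geq 0$ for $y\in C$, and dividing by $|\nabla h(x^j)|$ and passing to the limit shows that the normalized gradients can only accumulate at elements of $N_{\overline{C}}(\bar x)$.
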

\begin{proof} When no confusion may occur, we drop the dependence
on  the time variable $t$. By definition, 
$$
T_M=\sup \{T>0\:| \exists !\textrm{ solution $x$ of
$(H$-$SD)$ on $[0,T)$ s.t. } x([0,T)) \subset \F\}.
$$ 
We have that $T_M>0$. The definition
(\ref{E:projH}) of $P_x$ implies that for all $y \in \A_0$,
$(H(x)^{-1}\nabla f(x)+\dot x, y+\dot x)_x^H=0$ on $[0,T_M)$
and therefore
\begin{equation}\label{liap} \langle \nabla f(x)+H(x)\dot x, y+\dot
x\rangle=0
\T{ on } [0,T_M).
\end{equation}
Letting $y=0$ in (\ref{liap}), yields
\begin{equation}\label{fliap}
\frac{d}{dt}f(x)+\langle H(x)\dot x,\dot x\rangle=0.
\end{equation}
By (\ref{E:hypof})(ii), $f(x(t))$ is convergent as $t\to
T_M$. Moreover
\begin{equation}
\label{velo} \langle H(x(\cdot))\dot x(\cdot),\dot
x(\cdot)\rangle \in L^1(0,T_M;\R).
\end{equation}
Suppose that $T_M<+\infty$. To obtain a contradiction, we
begin by proving that $x$ is bounded. If $(WP_1)$ holds then $x$ is bounded
because  $f(x(t))$ is
non-increasing so that $x(t)\in \{ y\in \overline{\F} \:|f(y)\leq
f(x^0)\}$, $\forall t\in [0,T_M)$. Assume now that $f$ and $h$ comply with
$(WP_2)$, and let $a\in
\overline{\F}$.   For each $t\in [0,T_M)$ take $y=x(t)-a$ in (\ref{liap}) to
obtain $
\langle \nabla f(x)+\frac{d}{dt}\nabla h(x),x-a+\dot{x}\rangle =0.$ By
(\ref{fliap}), this gives $ \langle \frac{d}{dt}\nabla
h(x),x-a\rangle+\langle
\nabla f(x), x-a\rangle =0$, which we rewrite as
\begin{equation}
\label{Dliap} \frac{d}{dt} D_h (a,x(t))+\langle \nabla f(x(t)),
x(t)-a\rangle =0,\:\forall t\in [0,T_M).
\end{equation}
Now, let  $a\in\overline{F}$  be a minimizer of $f$ on
$\overline{\F}$. From the
quasi-convexity property of $f$, it follows that $\forall t\in [0,T_M)$,
$\langle \nabla f(x(t)), x(t)-a\rangle \geq 0$. Therefore, $D_h(a,x(t))$ is
non-increasing and $(WP_2)$(ii) implies that $x$  is bounded. Suppose
that $(WP_3)$ holds and fix $ t\in [0,T_M)$, we have
$|x(t)-x^0| \leq \int_0 ^t |\dot x(s)|ds \leq \int_0 ^t
||\sqrt{H(x(s))^{-1}}|||\sqrt{H(x(s))}\:\dot x(s)|ds \leq (\int_0 ^t
||H(x(s))^{-1}||ds)^{1/2}(\int_0 ^t \langle H(x(s))\dot x(s),\dot
x(s)\rangle ds)^{1/2}$. The latter follows from the
Cau\-chy-Schwartz inequality together with  the fact that $\|H(x)\|^2$ is the biggest
eigenvalue of $H(x)$. Thus $
|x(t)-x^0| \leq 1/2[\int_0 ^t
||H(x(s))^{-1}||ds+\int_0 ^t \langle H(x(s))\dot x(s), \dot x(r)\rangle
ds].$ Combining $(WP_3)$ and (\ref{velo}),
Gronwall's lemma yields the boundedness of $x$.

Let $\omega(x^0)$ be the set of limit points of $x$, and set
$K=x([0,T_M))\cup \omega(x^0)$. Since $x$ is bounded,
$\omega(x^0)\neq \emptyset$ and $K$ is compact. If $K\subset C$ then the
compactness of $K$  implies that $x$ can be extended beyond $T_M$,
which contradicts the maximality of $T_M$. Let us prove $K \subset C$. We
argue again by contradiction.
Assume that $x(t_j) \rightarrow x^*$, with $t_j<T_M$, $t_j
\rightarrow T_M$ as $ j \infi$ and  $x^*\in \bd C=
\overline{C}\setminus C$. Since $h$ is of Legendre type, we have
$|\nabla h (x(t_j))| \infi$, and we may assume that  $\nabla h (x(t_j))/
|\nabla h (x(t_j))|\to\nu \in \R^n$ with $|\nu|=1$.
\begin{lemma}\label{L:normal} If $(x^j)\subset C$ is  such that
$x^j\to x^*\in \bd C$ and $\nabla h (x^j)/ |\nabla h (x^j))|\to
\nu \in \R^n$, $h$ being a function of Legendre type with $C=\inte\dom h$,
then $\nu \in
N_{\overline{C}}(x^*)$.
\end{lemma}
\begin{proof}[of Lemma \ref{L:normal}]. By convexity of $h$,
$\langle \nabla h (x^j)-\nabla h(y), x^j-y\rangle \geq 0$ for all
$y\in C$. Dividing by $|\nabla h (x^j)|$ and letting $j \infi$,
we get $\<\nu,y-x^*\ra\leq 0$ for all $y\in C$, which holds also
for $y\in\overline{C}$. Hence, $\nu \in N_{\overline{C}}(x^*)$.
\end{proof} 

Therefore, $\nu \in N_{\overline{C}}(x^*)$. Let $\nu
_0=\Pi_{\A_0}\nu$ be the Euclidean orthogonal
projection of $\nu$ onto $\A_0$, and take $y=\nu_0$ in (\ref{liap}).
  Using (\ref{fliap}), integration  gives
\begin{equation}
\label{blowup} \langle \nabla h (x(t_j)),\nu _0\rangle=\langle
\nabla h (x^0)-\int_0 ^{t_j} \nabla f(x(s))ds,\nu _0\rangle.
\end{equation}
By $(H_0)$ and the boundedness property of $x$, the right-hand
side of (\ref{blowup}) is bounded under the assumption $T_M<+\infty$. Hence,
to draw a contradiction from
  (\ref{blowup}) it suffices to prove
$\langle \nabla h (x(t_j)), \nu_0\rangle \infi$. Since $\langle
\nabla h (x(t_j))/ |\nabla h (x(t_j))|, \nu _0\rangle \rightarrow
|\nu _0|^2$, the proof of the result is complete if we check that
$\nu _0 \neq 0$. This is a direct consequence of the following
\begin{lemma}\label{L:lem1}
Let $C$ be a nonempty open convex subset of $\R^n$ and  ${\cal
A}$  an affine subspace of $\R^n$ such that $C \cap {\cal A} \neq
\emptyset.$ If $x^* \in (\bd C )\cap {\cal A}$ then $
N_{\overline{C}} (x^*)\cap {\cal A}_0^{\perp}=\{0\}$ with
$\A_0=\A-\A$.
\end{lemma}
\begin{proof}[of Lemma \ref{L:lem1}]. Let us argue by contradiction
and suppose that we can pick some $v\neq 0$ in ${\cal A}_0
^{\perp}\cap N_{\overline{C}} (x^*)$. For $y_0 \in C \cap {\cal A}
$ we have $\langle v, x^*-y_0 \rangle=0$. For $r\geq 0$, $z\in
\R^n$,
let $B(z,r)$ denote the ball with center $z$ and radius $r$. There exists
$\epsilon >0$, such that
$B(y_0,\epsilon) \subset C$. Take $w$ in $B(0,\epsilon )$ such that $\langle
v,w\rangle <0$, then
  $y_0+w \in C$, yet $\langle v, x^*-(y_0+w) \rangle=\langle v,w\rangle <0$.
This contradicts the fact
that $v$ is in $N_{\overline{C}} (x^*)$. \end{proof}

This completes the proof of the theorem.
\end{proof}
%\begin{remark}\label{R:existence}{\rm We have proved that under either $(WP_1)$ or $(WP_2)$, $x(t)$ is
%bounded, and if $(WP_2)$ holds then for each $a\in S(P)$,
%$D_h(a,x(t))$ is a Liapounov functional for  $(H$-$SD)$.}
%\end{remark}
%%%%%%%%%%%%%%%%%%%%%%%%%%%%%%%%%%%%%%%%%%%%%%%%%%%%%%%%%%%%%%%%%%%%%%%%%
\subsection{Value convergence for a convex objective 
function}\label{S:value}
%%%%%%%%%%%%%%%%%%%%%%%%%%%%%%%%%%%%%%%%%%%%%%%%%%%%%%%%%%%%%%%%%%%%%%%%%
As a first result concerning the asymptotic behavior of $\HS$, we have the
following:
\begin{proposition}\label{P:convergence}
If $\HS$  is well-posed  and $f$ is convex then \( \forall
a\in\F,\;\forall t>0,\; f(x(t))\leq f(a)+\frac{1}{t}D_h(a,x^0)\),
where $D_h$ is defined by {\rm (\ref{E:Dfunction})}, hence $ \lim\limits_{t\to
+\infty}f(x(t))=\inf_{\overline{\F}}f.$
\end{proposition}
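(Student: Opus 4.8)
The plan is to convert the energy identity (\ref{Dliap}) into a quantitative rate estimate by invoking convexity. First I would observe that (\ref{Dliap}), i.e. $\frac{d}{dt}D_h(a,x(t))+\langle\nabla f(x(t)),x(t)-a\rangle=0$, is valid for every $a\in\F$ and not only under the hypotheses of Theorem \ref{T:existence}: its derivation uses only (\ref{liap}) with the admissible test vector $y=x(t)-a\in\A_0$ (legitimate since $x(t),a\in\A$) together with (\ref{fliap}), and it requires merely $a\in C=\inte\dom h$ for $D_h(a,\cdot)$ to be well defined and differentiable along the orbit. So I would fix an arbitrary $a\in\F$.

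Since $f$ is convex and of class ${\cal C}^1$, the subgradient inequality gives $\langle\nabla f(x(t)),x(t)-a\rangle\geq f(x(t))-f(a)$, so (\ref{Dliap}) becomes the differential inequality $\frac{d}{dt}D_h(a,x(t))\leq f(a)-f(x(t))$. Integrating over $[0,t]$ and discarding the nonnegative term $D_h(a,x(t))\geq0$ (nonnegativity of the $D$-function being immediate from the convexity of $h$), I would obtain $\int_0^t f(x(s))\,ds\leq t\,f(a)+D_h(a,x^0)$. To turn the time-average into a pointwise bound I would use (\ref{fliap}): since $H(x(t))\in\Sn$, it reads $\frac{d}{dt}f(x(t))=-\langle H(x(t))\dot x(t),\dot x(t)\rangle\leq0$, so $t\mapsto f(x(t))$ is nonincreasing and hence $t\,f(x(t))\leq\int_0^t f(x(s))\,ds$. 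Dividing the resulting inequality by $t>0$ yields exactly the claimed estimate $f(x(t))\leq f(a)+\frac1t D_h(a,x^0)$.

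To deduce the limit, I would let $t\infi$ in this estimate to get $\limsup_{t\to+\infty}f(x(t))\leq f(a)$ for every $a\in\F$; combined with the convergence of $t\mapsto f(x(t))$ furnished by Theorem \ref{T:existence}, this gives $\lim_{t\to+\infty}f(x(t))\leq\inf_\F f$, while $x(t)\in\F$ trivially forces the reverse inequality. Finally, because $\F=\ri\overline{\F}$ is dense in $\overline{\F}$ and $f$ is continuous, $\inf_\F f=\inf_{\overline{\F}}f$, completing the proof.

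I do not anticipate a genuine obstacle, as the core is a standard Bregman-monotonicity computation. The points demanding care are the orientation conventions in $D_h$ (cf. (\ref{E:BRE}) versus (\ref{E:Dfunction})), the remark that (\ref{Dliap}) is available for all $a\in\F$ regardless of which well-posedness condition holds, and the passage $\inf_\F f=\inf_{\overline{\F}}f$, which rests on density of $\F$ in $\overline{\F}$ together with continuity of $f$.
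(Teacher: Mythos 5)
Your proof is correct and follows essentially the same route as the paper's: both derive the estimate from (\ref{Dliap}) via the convexity inequality, integrate, discard the nonnegative term $D_h(a,x(t))$, and use the monotonicity of $t\mapsto f(x(t))$ from (\ref{fliap}) to pass from the time-average to the pointwise bound, then conclude by arbitrariness of $a\in\F$ and density of $\F$ in $\overline{\F}$. Your explicit remark that (\ref{Dliap}) holds for every $a\in\F$ independently of which well-posedness condition is assumed is a point the paper leaves implicit, but it is not a different argument.
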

\begin{proof} We begin by noticing that  $f(x(t))$ converges as
$t\to+\infty$ (see Theorem \ref{T:existence}). Fix $a\in \F$. By
(\ref{Dliap}), we have that the solution $x(t)$ of $(H$-$SD)$
satisfies $ \frac{d}{dt} D_h(a,x(t))+\langle \nabla
f(x(t)),x(t)-a\rangle=0, \: \forall t\geq 0.$ The convex
inequality$ f(x)+\<\nabla f(x),x-a\ra\leq f(a)$ yields $
D_h(a,x(t))+\int_0^t[f(x(s))-f(a)]ds\leq D_h(a,x^0).$ Using that
$D_h\geq 0$ and since $f(x(t))$ is non-increasing, we get the
estimate. Letting $t\to+\infty$, it follows that $ \lim_{t\to
+\infty}f(x(t))\leq f(a). $ Since $a\in\F$ was arbitrary chosen,
the proof  is complete. \end{proof}

% 
%  Under the assumptions of Proposition {\rm
%\ref{P:convergence}} together with  $(WP_1)$ or $(WP_2)$, $x(t)$
%is bounded (see Remark {\rm \ref{R:existence}}) and its cluster
%points belong to $S(P)$. However, the  convergence of $x(t)$ as
%$t\to +\infty$ is more delicate and we will require additional
%conditions on $h$ or $(P)$.

%%%%%%%%%%%%%%%%%%%%%%%%%%%%%%%%%%%%%%%%%%%%%%%%%%%%%%%%%%%%%%%%%%%%%

%%%%%%%%%%%%        BREGMAN METRICS                 %%%%%%%%%%%%%%%%%

%%%%%%%%%%%%%%%%%%%%%%%%%%%%%%%%%%%%%%%%%%%%%%%%%%%%%%%%%%%%%%%%%%%%%

\subsection{Bregman metrics and trajectory convergence}\label{S:bregman}

  In this section we establish the convergence of $x(t)$ under some
additional properties on the
$D$-function of $h$. Let us begin with a definition.
\begin{definition}\label{D:bregman} A function $h\in\Gamma_0(\RR^n)$ is
called  {\rm Bregman function with zone $C$} when the following
conditions are satisfied:\\
{\rm (i)} $\dom h=\overline{C}$,  $h$ is continuous and strictly convex on
$\overline{C}$ and $h_{|_C}\in {\cal C}^1(C;\RR)$.\\
{\rm (ii)}  $\forall a\in \overline{C}$, $\forall \gamma\in\RR$,
$\{y\in C| D_h(a,y)\leq \gamma\}$ is bounded, where $D_h$ is defined by {\rm
(\ref{E:Dfunction})}.\\
{\rm (iii)} $\forall y\in\overline{C}$, $\forall y^j\to y$ with
$y^j\in C$, $D_h(y,y^j)\to 0$.
\end{definition}

Observe that this notion slightly weakens the usual definition of
Bregman function that was proposed by Censor and Lent in
\cite{CeL81}; see also \cite{Bre67}.  Actually, a Bregman function
in the sense of Definition \ref{D:bregman} belongs to the class of
$B$-functions introduced by Kiwiel (see \cite[Definition
2.4]{Kiw97a}). Recall the following  important asymptotic
separation property:
\begin{lemma}\label{L:Bregman}{\rm \cite[Lemma
2.16]{Kiw97a}} If $h$ is a Bregman function with zone $C$ then
$\forall y\in\overline{C}$, $\forall (y^j)\subset C$ such that
$D_h(y,y^j)\to 0$, we have $y^j\to y$.
\end{lemma}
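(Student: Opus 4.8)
The plan is to argue that, once boundedness is secured, every convergent subsequence of $(y^j)$ has limit $y$. Since $D_h(y,y^j)\to 0$, the numbers $D_h(y,y^j)$ are bounded, say $D_h(y,y^j)\le\gamma$ for all large $j$; condition (ii) of Definition \ref{D:bregman} (applied with $a=y\in\overline{C}$) then forces $(y^j)$ to lie in a bounded set. Because a bounded sequence all of whose cluster points equal $y$ must converge to $y$, it suffices to fix an arbitrary convergent subsequence $y^{j_k}\to\overline{y}$ and show $\overline{y}=y$. Note that $\overline{y}\in\overline{C}$, since $y^{j_k}\in C\subset\overline{C}$ and $\overline{C}$ is closed.

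The key step is a symmetric midpoint estimate that eliminates the gradient altogether. First, condition (iii) of Definition \ref{D:bregman}, applied to $\overline{y}\in\overline{C}$ and the sequence $y^{j_k}\to\overline{y}$, gives $D_h(\overline{y},y^{j_k})\to 0$. Next set $m=\frac{1}{2}(y+\overline{y})$, which lies in $\overline{C}$ by convexity. Expanding $D_h(y,y^{j_k})$, $D_h(\overline{y},y^{j_k})$ and $D_h(m,y^{j_k})$ from (\ref{E:Dfunction}) and using $2m=y+\overline{y}$, all terms involving $\nabla h(y^{j_k})$ cancel in the combination
\[
D_h(y,y^{j_k})+D_h(\overline{y},y^{j_k})-2D_h(m,y^{j_k})=h(y)+h(\overline{y})-2h(m).
\]
Since $m,y,\overline{y}\in\dom h$ and $y^{j_k}\in\dom\partial h=C$, convexity of $h$ gives $D_h(m,y^{j_k})\ge 0$, hence $h(y)+h(\overline{y})-2h(m)\le D_h(y,y^{j_k})+D_h(\overline{y},y^{j_k})$. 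Letting $k\to\infty$, the right-hand side tends to $0$, so $h\!\left(\frac{1}{2}(y+\overline{y})\right)\ge\frac{1}{2}\bigl(h(y)+h(\overline{y})\bigr)$. Combined with the opposite (convexity) inequality this is an equality, and the strict convexity of $h$ on $\overline{C}$ from Definition \ref{D:bregman}(i) forces $y=\overline{y}$, which is what we wanted.

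The point requiring care — and the reason for the midpoint device — is the boundary case $\overline{y}\in\bd C$. The naive approach of passing to the limit directly in $D_h(y,y^{j_k})=h(y)-h(y^{j_k})-\langle\nabla h(y^{j_k}),y-y^{j_k}\rangle$ in order to recover $D_h(y,\overline{y})$ breaks down there, since a Bregman function need not be of Legendre type and $\nabla h(y^{j_k})$ may be unbounded as $y^{j_k}$ approaches $\bd C$, leaving the inner-product term uncontrolled. The symmetric combination above is arranged precisely so that every occurrence of $\nabla h(y^{j_k})$ disappears, reducing the matter to the midpoint strict-convexity inequality on $\overline{C}$, the nonnegativity of $D_h$, and condition (iii); no regularity of $\nabla h$ up to the boundary is needed, so the argument covers the interior and boundary cases uniformly.
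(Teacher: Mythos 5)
Your proof is correct. One point of comparison to note first: the paper itself does not prove this lemma at all --- it is imported verbatim from Kiwiel \cite[Lemma 2.16]{Kiw97a} as a known property of $B$-functions, so there is no internal argument to measure yours against. What you have produced is a valid, self-contained proof, and it is essentially the standard one in the Bregman-function literature. Each step checks out: condition (ii) of Definition \ref{D:bregman} with $a=y$ gives boundedness of $(y^j)$; for a cluster point $\overline{y}\in\overline{C}$ of a subsequence $y^{j_k}$, condition (iii) gives $D_h(\overline{y},y^{j_k})\to 0$; the identity
$$D_h(y,z)+D_h(\overline{y},z)-2D_h\bigl(\tfrac{1}{2}(y+\overline{y}),z\bigr)=h(y)+h(\overline{y})-2h\bigl(\tfrac{1}{2}(y+\overline{y})\bigr),\qquad z\in C,$$
is an exact cancellation of the $\nabla h(z)$ terms; and $D_h(m,y^{j_k})\ge 0$ plus the two vanishing limits force the midpoint equality $h\bigl(\tfrac{1}{2}(y+\overline{y})\bigr)=\tfrac{1}{2}\bigl(h(y)+h(\overline{y})\bigr)$, which strict convexity of $h$ on $\overline{C}$ (Definition \ref{D:bregman}(i)) turns into $\overline{y}=y$. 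Your diagnosis of why the naive limit passage fails is also accurate and relevant to this paper: for the Legendre-type Bregman functions used here (e.g., the kernels $\theta_3,\dots,\theta_6$ of section \ref{S:Examples}), $|\nabla h(y^{j_k})|$ genuinely blows up as $y^{j_k}$ approaches $\bd C$, so the symmetric combination eliminating the gradient is not a luxury but a necessity for the boundary case --- which is precisely the case exploited in the proof of Theorem \ref{T:convergence}, where the limit $x^*$ may lie in $\partial C\cap\A$.
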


%In relation with the examples given in section \ref{S:Examples}, the
%Legendre kernels $\theta_i$, $i=3,...,6$, are all Bregman functions with
%zone $\RR_+$. For $i=3,6$  we obtain the so called {\em Kullback-Liebler
%divergence}: $D_{\theta_i}(r,s)=r\ln(r/s)+s-r$, $r\geq0$ and $s>0$, while
%for $i=4,5$ with $\gamma=1/2$ we have
%$D_{\theta_i}(r,s)=(r^{1/2}-s^{1/2})/s^{1/2}$, $r\geq0$ and $s>0$.
%Concerning the induced Legendre function (\ref{E:defofh}), we have the
%following elementary result:
%\begin{lemma}
%Let $C$ be as in {\rm (\ref{E:convexC})} and $\theta \in \Gamma_0(\R)$
%satisfy $(H_1)$ with $\theta$ being a Bregman
%function with zone $\R _{+}$. Under {\rm (\ref{E:nondeg})}, the
%corresponding Legendre function $h$ defined by {\rm (\ref{E:defofh})} is
%indeed a Bregman function with zone $C$.
%\end{lemma}
\begin{theorem}\label{T:convergence}  Suppose that $(H_0)$  holds with $h$ being
a Bregman function with zone
$C$.  If $f$ is quasi-convex satisfying 
{\rm (\ref{E:locLip})} and $S(P)\neq\emptyset$ then $(H$-$SD)$ is
well-posed  and its solution $x(t)$  converges as $t\to+\infty$
to some $x^*\in \overline{\F}$ with $ -\nabla f (x^*) \in
N_{\overline{C}}(x^*)+ \A_0^\perp. $ If in addition $f$ is convex
then $x(t)$ converges to a solution of $(P)$.
\end{theorem}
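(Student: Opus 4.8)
The plan is to establish convergence in three stages: first well-posedness, then the existence of a limit point that is feasible, and finally the identification of that limit with a critical point in the sense of $(\mathcal{O})$. For well-posedness, I would invoke Theorem \ref{T:existence}: since $h$ is a Bregman function with zone $C$, condition $(WP_2)(\mathrm{i})$ follows from Definition \ref{D:bregman}(i)--(ii) ($\dom h = \overline{C}$ and the boundedness of the $D_h$-level sets), while the hypotheses $S(P)\neq\emptyset$ and quasi-convexity of $f$ give $(WP_2)(\mathrm{ii})$. Together with $\inf_{\F} f > -\infty$ (guaranteed since $S(P)\neq\emptyset$), Theorem \ref{T:existence} yields $T_M = +\infty$ and that $t\mapsto f(x(t))$ is nonincreasing and convergent.

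Next I would extract the candidate limit. Fix a minimizer $a\in S(P)$. As in the derivation of (\ref{Dliap}) in the proof of Theorem \ref{T:existence}, setting $y = x(t)-a$ in (\ref{liap}) and using (\ref{fliap}) gives $\frac{d}{dt}D_h(a,x(t)) + \langle \nabla f(x(t)), x(t)-a\rangle = 0$. Quasi-convexity of $f$ forces $\langle \nabla f(x(t)), x(t)-a\rangle \geq 0$ for all $t$, so $t\mapsto D_h(a,x(t))$ is nonincreasing and bounded below by $0$, hence convergent; in particular, by $(WP_2)(\mathrm{i})$ the trajectory $x(t)$ remains in a bounded set. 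The integral $\int_0^\infty \langle \nabla f(x), x-a\rangle\,dt$ is therefore finite, which produces a sequence $t_j\to+\infty$ along which $\langle \nabla f(x(t_j)), x(t_j)-a\rangle \to 0$. Passing to a subsequence, $x(t_j)\to x^*$ for some $x^*\in\overline{\F}$ (boundedness plus closedness of $\overline{\F}$). The optimality condition $-\nabla f(x^*)\in N_{\overline{C}}(x^*)+\A_0^\perp$ should then be read off from the differential-inclusion formulation (\ref{E:cont-prox}): integrating $\frac{d}{dt}\nabla h(x) + \nabla f(x) \in \A_0^\perp$ and tracking the blow-up of $\nabla h(x(t_j))$ along the normal directions, exactly as in Lemma \ref{L:normal}, identifies the limiting normal component with an element of $N_{\overline{C}}(x^*)$.

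The main obstacle, and the step I expect to demand the most care, is upgrading convergence along a subsequence to convergence of the full trajectory to a single point $x^*$. This is where the Bregman structure becomes indispensable: having established that $D_h(a,x(t))$ converges for every $a\in S(P)$, and in particular that $D_h(x^*, x(t))$ converges (choosing $a$ near $x^*$ or using a separate argument that $x^*$ itself satisfies the decrease), I would argue that the limit must be $0$ using the asymptotic separation Lemma \ref{L:Bregman}. Concretely, Definition \ref{D:bregman}(iii) gives $D_h(x^*, x(t_j))\to 0$ along the subsequence converging to $x^*$; monotonicity of $t\mapsto D_h(x^*,x(t))$ (if $x^*$ can be shown to play the role of a feasible reference point, or via a limiting argument on the class of valid $a$) then forces $D_h(x^*,x(t))\to 0$ along the whole net, and Lemma \ref{L:Bregman} converts this back into $x(t)\to x^*$. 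A delicate point here is that $x^*$ may lie on $\bd C$, so $D_h(x^*,\cdot)$ need not be one of the functionals whose monotonicity was established directly; handling this boundary case—justifying that the decrease property extends to the limit point—is the crux.

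Finally, when $f$ is convex, quasi-convexity is automatic and the condition $-\nabla f(x^*)\in N_{\overline{C}}(x^*)+\A_0^\perp$ is, by (\ref{E:optcond}) and the sufficiency noted in Section \ref{S:problem}, exactly the statement that $x^*\in S(P)$. Thus $x(t)$ converges to a solution of $(P)$, completing the proof.
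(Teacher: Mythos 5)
Your overall architecture (well-posedness via $(WP_2)$, the Bregman machinery of Definition \ref{D:bregman}(iii) plus Lemma \ref{L:Bregman} for full trajectory convergence, then reading the optimality condition off the integrated inclusion via Lemma \ref{L:normal}) matches the paper's, but the step you yourself flag as ``the crux'' is a genuine gap, and the fixes you gesture at do not close it. Your monotonicity of $t\mapsto D_h(a,x(t))$ is established only for $a\in S(P)$, and in the quasi-convex case the cluster point $x^*$ has no reason to belong to $S(P)$: the theorem asserts only the first-order condition $-\nabla f(x^*)\in N_{\overline{C}}(x^*)+\A_0^\perp$, not global optimality, and for merely quasi-convex $f$ such critical points need not be minimizers. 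Hence ``in particular $D_h(x^*,x(t))$ converges'' does not follow from what you proved; ``choosing $a$ near $x^*$'' fails because points near $x^*$ need not lie in $S(P)$ either; and no limiting argument over $a\in S(P)$ can help when $S(P)$ is far from the cluster points of the trajectory. The paper's missing idea is to enlarge the class of admissible reference points from $S(P)$ to the lower level set $L=\{y\in\overline{\F}\mid f(y)\leq f_\infty\}$, where $f_\infty=\lim_{t\to+\infty}f(x(t))$. For any $a\in L$ one has $f(a)\leq f_\infty\leq f(x(t))$ for all $t$ (since $f(x(t))$ is nonincreasing), so quasi-convexity yields $\langle\nabla f(x(t)),x(t)-a\rangle\geq 0$, and (\ref{Dliap}) makes $D_h(a,x(t))$ nonincreasing, hence convergent, for \emph{every} $a\in L$; moreover every cluster point of the bounded trajectory automatically lies in $L$, by continuity of $f$ and closedness of $\overline{\F}$. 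With $a=x^*$ thus admissible, your own Bregman argument (Definition \ref{D:bregman}(iii) along $t_j$, then Lemma \ref{L:Bregman}) concludes exactly as you intended.

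A secondary point: you derive the optimality condition at the level of the subsequence $t_j$, before full convergence. The paper does it in the opposite order for a reason: its argument passes to the limit in the Ces\`aro identity (\ref{E:inte}), $\frac{1}{t}\int_0^t\langle\nabla f(x(s)),z\rangle\,ds=\langle[\nabla h(x^0)-\nabla h(x(t))]/t,z\rangle$, and identifying the left-hand side with $\langle\nabla f(x^*),z\rangle$ requires $\nabla f(x(s))\to\nabla f(x^*)$ as $s\to+\infty$, i.e., convergence of the whole trajectory, not just of a subsequence. So this part of your sketch should in any case be re-ordered to come after the convergence proof, at which point it becomes the paper's case analysis ($x^*\in\F$ versus $x^*\in\bd C\cap\A$, the latter handled with Lemma \ref{L:normal}, Lemma \ref{L:lem1} and positive homogeneity of the normal cone).
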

\begin{proof} Notice first that $(WP_2)$ is satisfied. By Theorem
\ref{T:existence}, $(H$-$SD)$ is well-posed, $x(t)$ is bounded
and for each $a\in S(P)$, $D_h(a,x(t))$ is non-increasing and
hence convergent 
% (see Remark \ref{R:existence}). 
Set
$f_\infty=\lim_{t\to+\infty}f(x(t)) $ and define $L=\{y\in
\overline{\F}\;|\;f(y)\leq f_\infty\}$. The set $L$ is nonempty
and closed. Since $f$ is supposed to be quasi-convex, $L$ is
convex, and similar arguments as in the proof of Theorem
\ref{T:existence} under $(WP_2)$ show that $D_h(a,x(t))$ is
convergent for all $a\in L$.  Let $x^*\in L$ denote a cluster
point of $x(t)$ and take $t_j\to+\infty$ such that $x(t_j)\to
x^*$. Then, by (iii) in Definition \ref{D:bregman}, $
\lim_{t}D_h(x^*,x(t))=\lim_{j}D_h(x^*,x(t_j))=0.$
Therefore, $x(t)\to x^*$ thanks to
Lemma \ref{L:Bregman}. Let us prove that $x^*$ satisfies the optimality
condition $-\nabla f(x^*)\in
N_{\overline{C}}(x^*)+ \A_0^\perp$. Fix $z\in \A_0$, and for each
$t\geq 0$ take $y=-\dot{x}(t)+z$ in (\ref{liap}) to obtain $
\langle\frac{d}{dt} \nabla h(x(t))+ \nabla f(x(t)),z\rangle=0.$
This gives
\begin{equation}\label{E:inte}
\frac{1}{t}\int_0^t\langle\nabla f(x(s)),z\rangle
ds=\langle s(t),z\rangle,
\end{equation}
where $s(t)=[\nabla h(x^0)-\nabla h(x(t))]/t.$ If $x^*\in \F$ then $\nabla
h(x(t))\to \nabla h(x^*)$, hence $
\langle \nabla f(x^*),z\rangle=\lim_{t\to+\infty}\frac{1}{t}
\int_0^t\langle\nabla f(x(s)),z\rangle ds =\lim_{t\to+\infty}\langle
s(t),z\rangle=0.$ Therefore, $\Pi_{\A_0}\nabla f(x^*)=0$. But
$N_{\overline{\F}}(x^*)=\A_0^\perp$ when $x^*\in \F$, which proves
our claim in this case. Assume now that $x^*\notin\F$, which
implies that $x^*\in\partial C\cap \A$. By (\ref{E:inte}), we
have that $\langle s(t),z\rangle$ converges to $\langle \nabla
f(x^*),z\rangle$ as $t\to+\infty$ for all $z\in \A_0$, and
therefore  $\Pi_{\A_0} s(t)\to\Pi_{\A_0} \nabla f(x^*)$ as
$t\to+\infty$. On the other hand, by Lemma
\ref{L:normal}, we have that there exists $\nu\in
-N_{\overline{C}}(x^*)$ with $|\nu|=1$ such that $ \nabla
h(x(t_j))/|\nabla h(x(t_j))|\to \nu$ for some $t_j\to+\infty$.
Since $N_{\overline{C}}(x^*)$ is positively homogeneous, we
deduce that $\exists$ $\bar\nu\in -N_{\overline{C}}(x^*)$ such
that $ \Pi_{\A_0} \nabla f(x^*)= \Pi_{\A_0} \bar\nu$. Thus,
$-\nabla f(x^*)\in -\Pi_{\A_0}\bar\nu+\A_0^\perp\subseteq
N_{\overline{C}}(x^*)+\A_0^\perp$, which proves the theorem.
\end{proof}

 Following \cite{IuS99}, we remark that when $f$ is
linear, the limit point can be characterized as a sort of
``$D_h$-projection'' of the initial condition onto the optimal
set $S(P)$. In fact, we have:
\begin{corollary}\label{C:selection} Under the assumptions of Theorem {\em
\ref{T:convergence}}, if $f$ is linear then
the solution $x(t)$ of $(H$-$SD)$  converges as $t\to+\infty$ to
the unique optimal solution $x^*$ of
\begin{equation}\label{E:selection}
\min_{x\in S(P)} D_h(x,x^0).
\end{equation}
\end{corollary}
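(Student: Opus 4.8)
The plan is to let Theorem~\ref{T:convergence} do the convergence work and then to \emph{identify} the limit. Since a linear $f$ is in particular convex, Theorem~\ref{T:convergence} already guarantees that $\HS$ is well-posed and that $x(t)$ converges as $t\to+\infty$ to some $x^*\in S(P)$. Thus the only thing left is to show that this particular $x^*$ is the unique minimizer of $D_h(\cdot,x^0)$ over $S(P)$.

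First I would dispose of uniqueness, which is cheap. The map $x\mapsto D_h(x,x^0)=h(x)-h(x^0)-\langle\nabla h(x^0),x-x^0\rangle$ differs from $h$ only by an affine term, so it is strictly convex on $\dom h=\overline{C}$ because $h$ is. The feasible set of (\ref{E:selection}), namely $S(P)=\overline{\F}\cap\{x\mid\langle c,x\rangle=\inf_{\overline{\F}}f\}$, is convex. A strictly convex function has at most one minimizer over a convex set, so (\ref{E:selection}) admits at most one solution; existence will drop out once $x^*$ is shown to be optimal.

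The heart of the matter is the optimality of $x^*$. Writing $f(x)=\langle c,x\rangle$ and integrating the differential inclusion (\ref{E:cont-prox}) over $[0,t]$ gives, exactly as noted just before (\ref{E:viscosity}), the key relation $\nabla h(x(t))-\nabla h(x^0)+tc\in\A_0^\perp$. Now fix an arbitrary $\bar x\in S(P)$ and apply the three-point identity for the $D$-function (a direct algebraic consequence of (\ref{E:Dfunction})) to the triple $\bar x,\,x^0,\,x(t)$:
\[
D_h(\bar x,x^0)-D_h(\bar x,x(t))-D_h(x(t),x^0)=\langle\nabla h(x(t))-\nabla h(x^0),\,\bar x-x(t)\rangle .
\]
Since $\bar x,x(t)\in\A$ we have $\bar x-x(t)\in\A_0$, which is orthogonal to $\A_0^\perp$; substituting the integrated relation collapses the right-hand side to $-t\langle c,\bar x-x(t)\rangle=t\big(\langle c,x(t)\rangle-\langle c,\bar x\rangle\big)$. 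As $\bar x\in S(P)$, one has $\langle c,\bar x\rangle=\inf_{\overline{\F}}f$ while $\langle c,x(t)\rangle=f(x(t))\ge\inf_{\overline{\F}}f$, so this term is nonnegative. I thereby obtain the exact decomposition
\[
D_h(\bar x,x^0)=D_h(\bar x,x(t))+D_h(x(t),x^0)+t\big(f(x(t))-\inf_{\overline{\F}}f\big),
\]
whose three summands are all nonnegative (the first two by convexity of $h$, the third by optimality of $\bar x$). In particular $D_h(\bar x,x^0)\ge D_h(x(t),x^0)$ for every $t>0$. Letting $t\to+\infty$ and using that $x\mapsto D_h(x,x^0)$ is continuous on $\overline{C}$, I conclude $D_h(\bar x,x^0)\ge D_h(x^*,x^0)$. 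Since $\bar x\in S(P)$ was arbitrary and $x^*\in S(P)$, this proves $x^*$ solves (\ref{E:selection}), and by the uniqueness above it is \emph{the} solution.

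The one point I expect to require care is the passage to the limit when $x^*\in\partial C$: there $\nabla h(x(t))$ may blow up, so one cannot directly take $t\to+\infty$ in the term $D_h(\bar x,x(t))$. This is exactly why the argument is arranged to discard that term and retain only $D_h(x(t),x^0)$, whose continuity in its first argument is unproblematic, since $h$ is continuous on $\overline{C}=\dom h$ (Definition~\ref{D:bregman}(i)) and the base point $x^0\in C$ is held fixed. The remaining computations — the three-point identity and the orthogonality cancellation — are routine.
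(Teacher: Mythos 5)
Your proposal is correct, and its skeleton is the same as the paper's: invoke Theorem~\ref{T:convergence} for well-posedness and convergence to some $x^*\in S(P)$, establish the comparison $D_h(x(t),x^0)\le D_h(\bar x,x^0)$ for every $\bar x\in S(P)$, pass to the limit using continuity of $h$ on $\overline{C}$, and conclude uniqueness from strict convexity of $D_h(\cdot,x^0)$. The one place you diverge is the mechanism behind the comparison inequality. The paper simply cites the viscosity characterization (\ref{E:viscosity}): since $x(t)$ minimizes $\langle c,x\rangle+\frac1t D_h(x,x^0)$ over $\{Ax=b\}$ and $\langle c,x(t)\rangle\ge\langle c,\bar x\rangle$, the inequality drops out in one line. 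You instead return to the integrated inclusion $\nabla h(x(t))-\nabla h(x^0)+tc\in\A_0^\perp$ (the very relation from which (\ref{E:viscosity}) was derived) and combine it with the three-point identity for $D_h$, obtaining the exact decomposition
\[
D_h(\bar x,x^0)=D_h(\bar x,x(t))+D_h(x(t),x^0)+t\bigl(f(x(t))-\textstyle\inf_{\overline{\F}}f\bigr),
\]
with all three summands nonnegative. This is a bit longer but self-contained (it does not presuppose that the first-order condition yields the argmin property), it is slightly more informative — discarding the first term instead of the second recovers the value estimate of Proposition~\ref{P:convergence} — and your explicit handling of the possible blow-up of $\nabla h(x(t))$ when $x^*\in\partial C$, by retaining only the term $D_h(x(t),x^0)$ whose first argument stays in $\F$ and whose base point is fixed, is exactly the right precaution; the paper sidesteps this silently because (\ref{E:viscosity}) never requires evaluating $\nabla h$ at the limit.
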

\begin{proof} Let $x^*\in S(P)$ be such that $x(t)\to x^*$ as $t\to +\infty$.
Let $\bar x\in S(P)$.  Since $x(t)\in \F$, the optimality of $\bar x$ yields
$f(x(t))\geq f(\bar x)$, and
it follows from (\ref{E:viscosity}) that $D_h(x(t),x^0)\leq
D_h(\bar x, x^0)$. Letting $t\to+\infty$ in the last inequality, we deduce
that $x^*$ solves
(\ref{E:selection}). Noticing that $D_h(\cdot,x^0)$ is strictly
convex  due to Definition \ref{D:bregman}(i), we conclude the
result. \end{proof}

 We finish this section with an abstract result
concerning the rate of convergence under uniqueness of the
optimal solution. We will apply this result in the next section.
Suppose that $f$ is convex and satisfies (\ref{E:hypof}) and {\rm
(\ref{E:locLip})}, with in addition $S(P)=\{a\}$. Given a Bregman
function $h$ complying with $(H_0)$,  consider the following
growth condition:
$$
  f(x)-f(a)\geq \alpha D_h (a,x)^{\beta}, \: \forall x \in U_a
\cap  \overline{C}, \leqno (GC)
$$
where $U_a$ is a neighborhood of $a$ and with $\alpha>0$, $\beta
\geq 1$.
%By Proposition \ref{P:convergence}, and independently of$(GC)$, the
%trajectory of $(H$-$SD)$ is known to converge to $a$ as $t$ goes to
%infinity.  
The next abstract result gives an
estimation of the convergence rate with respect to the
$D$-function of $h$.
\begin{proposition}\label{P:rate1}
Assume that $f$ and $h$ satisfy the above conditions an let
$x:[0,+\infty)\to \F$ be the solution of $\HS$. Then we have the
following estimations:\\
$\bullet$ If $\beta =1$ then there exists $K>0$ such that
$D_h (a,x(t))\leq Ke^{-\alpha t}$, $\forall t>0.$\\
$\bullet$ If $\beta >1$ then there exists $K'>0$ such that $D_h
(a,x(t))\leq K'/t^{\frac{1}{\beta-1}}$, $\forall t>0.$
\end{proposition}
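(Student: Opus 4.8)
The plan is to extract from the dynamics a scalar differential inequality for the function $\phi(t):=D_h(a,x(t))$ and to integrate it. First I would invoke Theorem \ref{T:convergence}: since $f$ is convex (hence quasi-convex), satisfies (\ref{E:locLip}), and $S(P)=\{a\}\neq\emptyset$ with $h$ a Bregman function complying with $(H_0)$, the system $\HS$ is well-posed and its solution $x(t)$ converges to $a$ as $t\to+\infty$. Therefore there exists $t_0\geq 0$ such that $x(t)\in U_a\cap\overline{C}$ for all $t\geq t_0$, which is precisely the region where the growth condition $(GC)$ is available.

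Next I would exploit the Liapounov identity (\ref{Dliap}), valid here because $a\in S(P)\subset\overline{\F}$, namely $\frac{d}{dt}D_h(a,x(t))=-\langle\nabla f(x(t)),x(t)-a\rangle$. Combining it with the subgradient inequality of the convex function $f$, $\langle\nabla f(x),x-a\rangle\geq f(x)-f(a)$, gives $\dot\phi(t)\leq -(f(x(t))-f(a))$ for every $t$; applying $(GC)$ on $[t_0,+\infty)$ then yields the autonomous differential inequality
$$\dot\phi(t)\leq -\alpha\,\phi(t)^{\beta},\qquad t\geq t_0.$$
Since $h$ is strictly convex, $\phi\geq 0$ and $\phi(t)=0$ iff $x(t)=a$, so the degenerate case $\phi(t_0)=0$ is trivial and I may otherwise assume $\phi>0$ and divide.

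The integration is then elementary in each regime. For $\beta=1$ the inequality reads $\dot\phi/\phi\leq-\alpha$, and Gronwall's lemma (equivalently, direct integration of $\frac{d}{dt}\log\phi$) gives $\phi(t)\leq\phi(t_0)e^{-\alpha(t-t_0)}$ for $t\geq t_0$. For $\beta>1$ I would rewrite it as $\frac{d}{dt}\phi^{1-\beta}\geq(\beta-1)\alpha$ — the factor $1-\beta<0$ reversing the sense of the inequality — and integrate from $t_0$ to $t$ to obtain $\phi(t)^{1-\beta}\geq\phi(t_0)^{1-\beta}+(\beta-1)\alpha(t-t_0)\geq(\beta-1)\alpha(t-t_0)$, i.e. $\phi(t)\leq\big[(\beta-1)\alpha(t-t_0)\big]^{-1/(\beta-1)}$ for $t\geq t_0$.

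Finally, to upgrade these estimates on $[t_0,+\infty)$ into bounds valid for all $t>0$ as stated, I would use that $\phi(t)=D_h(a,x(t))$ is non-increasing (shown for $a\in S(P)$ in the proof of Theorem \ref{T:convergence}), hence $\phi(t)\leq\phi(0)$ on the compact transient $[0,t_0]$, and merely enlarge the constant: for $\beta=1$ take $K=\max\{\phi(0),\phi(t_0)\}e^{\alpha t_0}$, and for $\beta>1$ compare $t$ with $2t_0$ (so that $t-t_0\geq t/2$ for $t\geq 2t_0$) and choose $K'$ to dominate both the resulting $t^{-1/(\beta-1)}$ bound and the value $\phi(0)$ on $(0,2t_0]$. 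I expect the only mildly delicate point to be exactly this bookkeeping absorbing the initial segment into the constants; the differential-inequality step is routine once the trajectory has entered $U_a$, and the essential structural input is the convergence $x(t)\to a$ supplied by Theorem \ref{T:convergence}, which guarantees that $(GC)$ eventually takes effect.
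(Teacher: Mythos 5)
Your proof is correct and follows essentially the same route as the paper's: invoke Theorem \ref{T:convergence} for well-posedness and convergence of $x(t)$ to $a$, combine (\ref{Dliap}) with the convex subgradient inequality and $(GC)$ to obtain $\frac{d}{dt}D_h(a,x(t))+\alpha D_h(a,x(t))^\beta\leq 0$ for $t\geq t_0$, then divide and integrate. The only differences are cosmetic: you spell out the constant bookkeeping on $(0,t_0]$ that the paper compresses into ``a simple integration procedure,'' and you treat the positivity of $D_h(a,x(t))$ a bit more loosely than the paper (which proves $D_h(a,x(t))>0$ for all $t\geq 0$ iff $x^0\neq a$, splitting the cases $a\in\overline{\F}\setminus\F$ and $a\in\F$), a point that your monotonicity observation covers in any case.
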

\begin{proof} The assumptions of Theorem \ref{T:convergence} are
satisfied, this yields the well-posedness of $(H$-$SD)$
and the convergence of $x(t)$ to $a$ as $t\to +\infty$.
Besides, from  (\ref{Dliap}) it follows that for all $t\geq 0$,
$\frac{d}{dt} D_h(a,x(t))+
\langle \nabla f (x(t)), x(t)-a \rangle=0.$ By convexity of $f$,
we have $ \frac{d}{dt} D_h(a,x(t))+f (x(t))-f(a)\leq 0.$ Since
$x(t)\to a$, there exists $t_0$
such that $\forall t\geq t_0$,  $x(t)\in U_a \cap \F$. Therefore
by combining $(GC)$ and the last inequality it follows that
\begin{equation}
\label{difine} \frac{d}{dt} D_h(a,x(t))+\alpha D_h
(a,x(t))^{\beta}\leq 0, \: \forall t\geq t_0.
\end{equation}
In order to integrate this differential inequality, let us first
observe that we have the following equivalence: $D_h(a,x(t))>0,
\:\forall t\geq0$ iff $x^0\neq a$. Indeed, if $a \in
\overline{\F}\setminus\F$ then the equivalence follows from
$x(t)\in\F$ together with Lemma \ref{L:Bregman}; if $a\in \F$ then
the optimality condition that is satisfied by $a$ is
$\Pi_{\A_0}\nabla f(a)=0$, and the equivalence
is a consequence of the uniqueness of the solution $x(t)$ of $(H$-$SD)$.
Hence, we can assume that $x^0\neq a$ and
divide (\ref{difine}) by $D_h (a,x(t))^{\beta}$ for all $t\geq
t_0$.
A simple integration procedure then yields the result. \end{proof}

\subsection{Examples: interior point flows in convex
programming}\label{S:Examples}

This section gives a systematic method to construct explicit Legendre metrics 
 on a quite general class of convex sets. By so doing, we will also show
 that many systems studied earlier by various authors \cite{BaL89,Kar90,Fay91a,Fia90,Mac89} appears as particular cases of $\HS$ systems.

Let $p\geq 1$ be an integer and set $I=\{1,\ldots,p\}$. Let us
assume that to each $i\in I$ there corresponds a ${\cal C}^3$ concave
function
$g_i:\RR^n\to \RR$ such that
\begin{equation}\label{E:slater}
\exists x^0\in \RR^n, \; \forall i\in I, \; g_i(x^0)> 0.
\end{equation}
Suppose that the open convex set $C$ is given by
\begin{equation}\label{E:convexC}
C=\{x\in\reels^n\;|\;g_i(x)>0, i\in I\}.
\end{equation}
By  (\ref{E:slater}) we have that
$C\neq\emptyset$ and $\overline{C}=\{x\in\reels^n\;|\;g_i(x)\geq 0, i\in
I\}.$
Let us introduce a class of convex functions of Legendre type $\theta \in
\Gamma_0(\RR)$
satisfying
$$
\left\{
\begin{array}{l}
\hbox{(i)   }(0,\infty)\subset \dom \theta \subset [0,\infty).\\
\hbox{(ii)  }\theta\in {\cal C}^3(0,\infty) \hbox{ and } \lim_{s\to
0^{+}}\theta'(s)=-\infty.\\
\hbox{(iii) }\forall s>0,\: \theta''(s)>0.\\
\hbox{(iv) }\hbox{Either }\theta\hbox{ is non-increasing or }\forall i\in I,
g_i \hbox{ is an affine function.}
\end{array}
\right.\leqno{(H_1)}$$
\begin{proposition} Under {\rm (\ref{E:slater})} and
$(H_1)$, the function $h\in \Gamma_0(\RR^n)$ defined by
\begin{equation}\label{E:defofh} h(x)=\sum_{i\in I}\theta(g_i(x)).
\end{equation} is essentially smooth  with $\inte\dom h=C$ and $h\in {\cal
C}^3(C)$,
where $C$ is given by {\rm (\ref{E:convexC})}. If we assume in
addition the following non-degeneracy condition:
\begin{equation}\label{E:nondeg}
\forall x\in C,\; {\rm span}\{\nabla g_i(x)\:|\:i\in I \}=\reels^n,
\end{equation}
then $H=\nabla^2 h$ is positive definite on $C$, and consequently $h$
satisfies $(H_0)$.
\end{proposition}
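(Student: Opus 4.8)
The plan is to verify each claimed property of $h(x)=\sum_{i\in I}\theta(g_i(x))$ in turn, using the chain rule together with the hypotheses $(H_1)$ on $\theta$ and the concavity of the $g_i$. First I would compute the Euclidean gradient and Hessian of $h$. By the chain rule,
\[
\nabla h(x)=\sum_{i\in I}\theta'(g_i(x))\nabla g_i(x),
\]
and
\[
\nabla^2 h(x)=\sum_{i\in I}\Big[\theta''(g_i(x))\nabla g_i(x)\nabla g_i(x)^T+\theta'(g_i(x))\nabla^2 g_i(x)\Big].
\]
Since each $g_i\in{\cal C}^3$ and $\theta\in{\cal C}^3(0,\infty)$, and $g_i(x)>0$ for $x\in C$, the composition is ${\cal C}^3$ on $C$, giving $h\in{\cal C}^3(C)$.

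Next I would identify $\inte\dom h$ with $C$. The key point is that $\dom\theta\subset[0,\infty)$ with $(0,\infty)\subset\dom\theta$, so $\theta(g_i(x))$ is finite precisely when $g_i(x)\geq 0$ (with the boundary value understood via lower semicontinuity), and the interior of the feasible region is exactly $\{x\mid g_i(x)>0,\ i\in I\}=C$. I would also check $h\in\Gamma_0(\RR^n)$: convexity of $h$ follows because $\theta$ is convex and each $\theta\circ g_i$ is convex — here hypothesis $(H_1)$(iv) is essential, since composing a convex function with a concave $g_i$ preserves convexity only when $\theta$ is non-increasing (the affine case being trivial as $\nabla^2 g_i=0$). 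Lower semicontinuity and properness then follow from those of $\theta$ and continuity of the $g_i$.

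For essential smoothness I must show $|\nabla h(x^j)|\to+\infty$ whenever $x^j\in C$ converges to a boundary point $x^*\in\bd C$. At such a point some $g_{i_0}(x^*)=0$, so $g_{i_0}(x^j)\to 0^+$ and hence $\theta'(g_{i_0}(x^j))\to-\infty$ by $(H_1)$(ii). The main obstacle is controlling the sum: I would project $\nabla h(x^j)$ onto the direction $\nabla g_{i_0}(x^*)$ (nonzero by the non-degeneracy condition \eqref{E:nondeg}) and argue that the blow-up of the $i_0$-term dominates, since the surviving active terms each have $\theta'\to-\infty$ with the same sign (non-increasing $\theta$ gives $\theta'\leq 0$, so no cancellation occurs among active constraints), while the inactive terms stay bounded. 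This sign-coherence, guaranteed precisely by $(H_1)$(iv), is what prevents the active singular contributions from cancelling and forces $|\nabla h(x^j)|\to+\infty$.

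Finally, positive definiteness of $H=\nabla^2 h$ on $C$ under \eqref{E:nondeg}: for $u\neq 0$,
\[
\langle\nabla^2 h(x)u,u\rangle=\sum_{i\in I}\theta''(g_i(x))\langle\nabla g_i(x),u\rangle^2+\sum_{i\in I}\theta'(g_i(x))\langle\nabla^2 g_i(x)u,u\rangle.
\]
The second sum is nonnegative: each $g_i$ is concave so $\langle\nabla^2 g_i(x)u,u\rangle\leq 0$, and $\theta'\leq 0$ in the non-increasing case (affine case makes the term vanish). The first sum is a sum of nonnegative terms with strictly positive coefficients $\theta''(g_i(x))>0$ by $(H_1)$(iii), and it vanishes only if $\langle\nabla g_i(x),u\rangle=0$ for every $i$; but \eqref{E:nondeg} says the $\nabla g_i(x)$ span $\RR^n$, forcing $u=0$. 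Hence $\langle\nabla^2 h(x)u,u\rangle>0$ for all $u\neq 0$, so $H(x)\in\Sn$. Combined with $h\in{\cal C}^3(C)$ (which gives local Lipschitz continuity of $\nabla^2 h$), this establishes $(H_0)$.
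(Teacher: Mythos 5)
Your computation of $\nabla h$ and $\nabla^2 h$, the ${\cal C}^3$ regularity, the identification $\inte\dom h=C$, and the positive-definiteness argument under (\ref{E:nondeg}) are all correct, and the last of these coincides with the paper's. The genuine gap is in your proof of essential smoothness, which is the heart of the statement. First, a structural problem: you invoke (\ref{E:nondeg}) to guarantee $\nabla g_{i_0}(x^*)\neq 0$, but essential smoothness is asserted \emph{without} that hypothesis (it is only added for the Hessian claim), and in any case (\ref{E:nondeg}) is postulated only at points of $C$, not at boundary points such as $x^*$. Second, and more seriously, the ``sign-coherence'' argument is wrong: the cancellation you must rule out does not come from the signs of the coefficients $\theta'(g_i(x^j))$ (these are indeed all eventually negative for active $i$), but from the signs of the inner products $\langle\nabla g_i(x^j),\nabla g_{i_0}(x^*)\rangle$, which can be negative for the \emph{other} active constraints. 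Concretely, take $n=2$, $g_1(x)=x_1$, $g_2(x)=1-x_1-x_2$, $\theta(s)=-\ln s$, $x^*=(0,1)$ and $x^j=(1/j,\,1-2/j)$. Both constraints are active at $x^*$, yet
\[
\nabla h(x^j)=-j\,(1,0)^T+j\,(1,1)^T=(0,j)^T,\qquad
\langle\nabla h(x^j),\nabla g_1(x^*)\rangle=0\ \mbox{ for all } j,
\]
so the projection onto $\nabla g_{i_0}(x^*)$ stays bounded (here identically zero) even though both singular terms blow up: your dominance argument cannot yield $|\nabla h(x^j)|\to+\infty$.

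There are two ways to close the gap. The paper's route avoids any limit computation: writing $h=\sum_i h_i$ with $h_i=\theta\circ g_i$, it applies the subdifferential sum rule \cite[Theorem 23.8]{Roc70} and the composition formula to obtain $\partial h(x)=\sum_{i\in I}\theta'(g_i(x))\nabla g_i(x)$ for $x\in C$ and $\partial h(x)=\emptyset$ otherwise (note $\partial\theta(0)=\emptyset$ because $\theta'(0^+)=-\infty$ by $(H_1)$(ii)); since $\partial h$ is single-valued, Rockafellar's characterization \cite[Theorem 26.1]{Roc70} delivers essential smoothness and $\inte\dom h=\dom\partial h=C$ in one stroke. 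Alternatively, your direct approach can be repaired by changing the test direction: pair $\nabla h(x^j)$ with $y^0-x^j$, where $y^0$ is the Slater point of (\ref{E:slater}). Concavity gives $\langle\nabla g_i(x^j),y^0-x^j\rangle\geq g_i(y^0)-g_i(x^j)$, which is bounded below by a positive constant for active $i$ and large $j$; hence every active term $\theta'(g_i(x^j))\langle\nabla g_i(x^j),y^0-x^j\rangle$ tends to $-\infty$ while the inactive terms remain bounded, so $\langle\nabla h(x^j),y^0-x^j\rangle\to-\infty$ and therefore $|\nabla h(x^j)|\to+\infty$. In this pairing all singular contributions carry the same sign, which is exactly the coherence your argument needed but did not have.
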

\begin{proof} Define $h_i\in\Gamma_0(\RR^n)$ by
$h_i(x)=\theta(g_i(x)).$ We have that $\forall i\in I$,
$C\subset \dom h_i$. Hence $ \inte\dom h=\bigcap_{i\in
I}\inte\dom h_i\supseteq C\neq \emptyset,$ and by \cite[Theorem
23.8]{Roc70}, we conclude that $
\partial h(x)=\sum_{i\in I}\partial h_i(x)$ for all
$x\in\RR^n$. But $ \partial h_i(x)=\theta'(g_i(x))\nabla g_i(x)$
if  $g_i(x)>0$ and  $\partial h_i(x)=\emptyset$ if $g_i(x)\leq
0$; see \cite[Theorem IX.3.6.1]{HiL96}. Therefore
$\partial h(x)=\sum_{i\in I}\theta'(g_i(x))\nabla g_i(x)$ if $x\in C$, and
$\partial h(x)=\emptyset$ otherwise. Since $\partial h$ is a
single-valued mapping, it follows from \cite[Theorem 26.1]
{Roc70} that $h$ is essentially smooth and $\inte\dom
h=\dom\partial h=C$. Clearly, $h$ is of class ${\cal C}^3$ on
$C$. Assume now that (\ref{E:nondeg}) holds. For  $x\in C$, we
have $\nabla^2 h(x)=\sum_{i\in I}\theta''(g_i(x))\nabla
g_i(x)\nabla g_i(x)^T+\sum_{i\in
I}\theta'(g_i(x))\nabla^2g_i(x).$ By $(H_1)$(iv), it follows that
for any $v\in\RR^n$, $ \sum_{i\in
I}\theta'(g_i(x))\<\nabla^2g_i(x)v,v\ra\geq 0$. Let $v\in\RR^n$
be such that $\<\nabla ^2h(x)v,v\ra=0$, which yields $ \sum_{i\in
I}\theta''(g_i(x))\<v,\nabla g_i(x)\ra^2=0.$ According to
$(H_1)$(iii),  the latter implies that $v\in {\rm span}\{\nabla
g_i(x)|i\in I \}^\perp=\{0\}$. Hence $\nabla ^2h(x)\in\Sn$  and
the proof is complete. \end{proof}

  If $h$ is defined by (\ref{E:defofh}) with
$\theta\in\Gamma_0(\RR)$ satisfying $(H_1)$, we say that $\theta$
is the {\em Legendre kernel} of $h$. Such kernels can be divided
into two classes. The first one corresponds to those kernels
$\theta$ for which $\dom\theta=(0,\infty)$ so that
$\theta(0)=+\infty$, and are associated with {\em interior
barrier} methods in optimization as  for instance :  the log-barrier $\theta_1(s)=-\ln(s)$, $s>0$ and the inverse barrier $\theta_{2}(s)=1/s$,
$s>0$. The kernels $\theta$ belonging to the second class
satisfy $\theta(0)<+ \infty$, and are connected with the notion
of {\em Bregman function} in proximal algorithms theory. 
Here are some examples: 
 the Boltzmann-Shannon entropy 
$\theta_3(s)=s\ln(s)-s$, $s\geq 0$ (with
$0\ln0=0$); $\theta_4(s)=-\frac{1}{\gamma}s^\gamma$ with $\gamma\in (0,1)$ ,
$s\geq 0$ (Kiwiel \cite{Kiw97a});  $\theta_5(s)= (\gamma
s-s^{\gamma})/(1-\gamma)$ with $\gamma \in (0,1)$, $s\geq 0$ (Teboulle \cite{Teb92}); the ``$x \log x$'' entropy 
$\theta_6 (s)=s\ln s$, $ s\geq 0$.
In relation with  Theorem \ref{T:convergence} given in the previous section, note that the
Legendre kernels $\theta_i$, $i=3,...,6$, are all Bregman functions with
zone $\RR_+$. Moreover, it is easily seen that each corresponding Legendre 
function $h$ defined by {\rm (\ref{E:defofh})} is indeed 
a Bregman function with zone $C$.

In order to illustrate the type of dynamical systems given by
$(H$-$SD)$,  consider the case of positivity constraints where
$p=n$ and $g_i(x)=x_i$, $i\in I$. Thus $C=\RR^n_{++}$ and
$\overline{C}=\RR^n_+$. Let us assume that $\exists x^0\in
\R^n_{++}$, $ Ax^0=b$. Recall that the corresponding minimization problem is
 $(P)\:\:\min\{ f(x) \;|\; x\geq 0,\; Ax=b\}$ and take first the kernel $\theta_3$ from above. The
associated Legendre function (\ref{E:defofh}) is given by
\begin{equation}\label{E:xlogx}
h(x)=\sum_{i=1}^n x_i\ln x_i-x_i, \; x\in\RR^n_{+},
\end{equation}
%thus $ H(x)={\rm diag}(1/x_1,...,1/x_n)\in\RR^{n\times n},$
%$x\in\RR^n_{++}$, the induced Riemannian metric is
%\begin{equation}\label{E:shahshahani} \forall x \in \RR^n_{++},\:
%\forall u,v\in  \R^n,\: (u,v)_x^H=\<X^{-1}u,v\ra,
%\end{equation}
%\end{equation}
and the differential equation in $(H$-$SD)$ is given by
\begin{equation}\label{E:faybusovich}
\dot{x}+[I-XA^T(AXA^T)^{-1}A]X\nabla f(x)=0.
\end{equation}
where $X={\rm diag}(x_1,...,x_n)$.
If $f(x)=\<c,x\ra$ for some $c\in \RR^n$ and  in absence of
linear equality constraints, then (\ref{E:faybusovich}) is $
\dot{x}+X c=0$. The  change of coordinates $y=\nabla h(x)=(\ln
x_1,...,\ln x_n)$ gives $ \dot{y}+c=0$. Hence,
$x(t)=(x_1^0e^{-c_1t},...,x_n^0e^{-c_nt})$, $t\in \R,$ where
$x^0=(x_1^0,...,x_n^0)\in\RR^n_{++}$. If $c\in\R^n_+$ then
$\inf_{x\in \RR^n_{+}}\<c,x\ra=0$ and $x(t)$ converges to a
minimizer of $f=\<c,\cdot\ra$ on $\RR^n_+$; if $c_{i_0}<0$ for
some $i_0$,  then $\inf_{x\in \RR^n_{+}}\<c,x\ra=-\infty$ and
$x_{i_0}(t)\to+\infty$ as $t\rightarrow +\infty$. Next, take
$A=(1,\ldots,1)\in\RR^{1\times n}$ and $b=1$ so that the feasible
set of $(P)$  is given by
$\overline{\F}=\Delta_{n-1}=\{x\in\R^n\mid x\geq 0,\:
\sum_{i=1}^nx_i=1\}$, that is the $(n-1)$-dimensional simplex. In
this case, (\ref{E:faybusovich}) corresponds to $\dot
x+[X-xx^T]\nabla f(x)=0$, or componentwise
\begin{equation}\label{E:karmarkar}
\dot x_i+x_i\left(\frac{\partial f}{\partial
x_i}-\sum_{j=1}^nx_j\frac{\partial f}{\partial x_j}\right)=0,\quad
i=1,\ldots,n.
\end{equation}
For  suitable choices of $f$, this is a {\em Lotka-Volterra} type
equation that naturally arises in population dynamics theory and,
in that context, the structure $(\cdot,\cdot)^H$ with $h$ as in (\ref{E:xlogx}) is usually referred to as
the {\em Shahshahani} metric; see \cite{Aki79,HoS98} and the
references therein. The figure \ref{3dess1} gives
a numerical illustration of system (\ref{E:karmarkar})
for $n=3$ and with $f(x)=x_3-x_2$.
\begin{figure}%[!h]
\begin{center}
\includegraphics[scale=0.4]{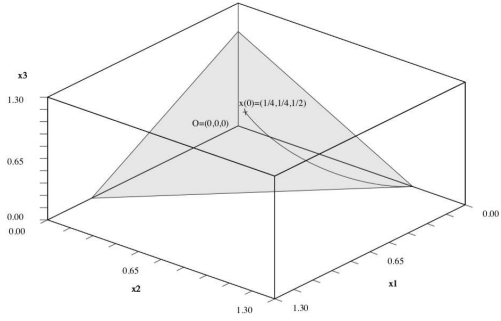}
\end{center}
\caption{\label{3dess1} A trajectory of (\ref{E:karmarkar}).}
\end{figure}
Karmarkar studied (\ref{E:karmarkar}) in \cite{Kar90} for
a quadratic objective function as a continuous model of the
interior point algorithm introduced by him in \cite{Kar84}. 
Equation (\ref{E:faybusovich}) is studied by Faybusovich in
\cite{Fay91a,Fay91b,Fay91c} when $(P)$ is a linear
program, establishing connections with completely integrable
Hamiltonian systems and exponential convergence rate, and by
Herzel et al. in \cite{HRZ91}, who prove quadratic convergence
for an explicit discretization.  
%See \cite{HeM94} for more
%general linear inequality constraints and for connections with
%the {\em double Lie bracket} flow introduced by Brockett in
%\cite{Bro88,Bro91}.

Take now the log barrier kernel $\theta_1$ and $
h(x)=-\sum_{i=1}^n\ln x_i.$
Since $\nabla ^2 h(x)=X^{-2}$ with $X$ defined as above,
the associated differential equation is
\begin{equation}\label{E:Atrajectory}
\dot{x}+[I-X^2A^T(AX^2A^T)^{-1}A]X^2\nabla f(x)=0.
\end{equation}
This equation  was considered by Bayer and Lagarias in
\cite{BaL89} for  a  linear program. In the particular case
$f(x)=\<c,x\ra$ and without linear equality constraints,
(\ref{E:Atrajectory}) amounts to  $\dot{x}+X^2c=0$, or $\dot y+c=0$ for
$y=\nabla h(x)=-X^{-1}e$ with $e=(1,\cdots,1)\in\R^n$, which gives
$x(t)=\left(1/(1/x_1^0+c_1t),...,1/(1/x_n^0+c_nt)\right)$,
$ T_m\leq t\leq T_M, $ with $ T_m=\max\{-1/x_i^0c_i\;|\; c_i>0\}$
and $T_M=\min\{-1/x_i^0c_i\;|\; c_i<0\}$ (see \cite[pag.
515]{BaL89}).   Denote by $\Pi_{A_0}$ the Euclidean orthogonal projection 
onto $\A_0$. To study  the associated trajectories for a general linear
program, it is
introduced in \cite{BaL89} the {\em Legendre transform
coordinates} $y=\Pi_{\A_0}\nabla
h(x)=[I-A^T(AA^T)^{-1}A]X^{-1}e$, which still linearizes
(\ref{E:Atrajectory}) when
$f$ is linear (see  section \ref{S:legendre-transform} for an
extension of this result), and permits to establish some
remarkable analytic and geometric properties of the trajectories.
A similar system was considered in \cite{Fia90,Mac89} as a continuous
log-barrier method for nonlinear inequality constraints and with
$\A_0=\R^n$.

New systems may be derived by choosing other kernels. For instance, taking 
 $h(x)=-1/\gamma\sum_{i=1}^n x_i^\gamma$ with $\gamma\in (0,1)$, $A=(1,\ldots,1)\in\RR^{1\times n}$ and $b=1$, we obtain
\begin{equation}\label{E:kiwiel}
\dot x_i+\frac{x_i^{2-\gamma}}{1-\gamma}\left(\frac{\partial f}{\partial
x_i}-\sum_{j=1}^n\frac{x_j^{2-\gamma}}{\sum_{k=1}^nx_k^{2-\gamma}}\frac{\partial f}{\partial x_j}\right)=0,\quad
i=1,\ldots,n.
\end{equation} 
%Let us also mention that different Legendre kernels may be combined by taking  %$h(x)=\sum_{i=1}^ph_i(g_i(x))$. 
%The case
%of {\em box} constraints $L_i\leq x_i\leq U_i$ may be handled
%with a term of the form
%$h_i(x_i)=\theta(x_i-L_i)+\theta(U_i-x_i)$; taking for instance
%the $x\log x$-entropy and $0\leq x_i\leq 1$, we obtain the {\em
%Fermi-Dirac} entropy $h_i(x_i)=x_i\ln x_i+(1-x_i)\ln(1-x_i)$.

%%%%%%%%%%%%%%%%%%%%%%%%%%%%%%%%%%%%%%%%%%%%%%%%%%%%%%%%%%%%%%%%%%%%%%%%%%

%%%%%%%%%%%%%%  ASYMPTOTIC CONVERGENCE IN LINEAR PROGRAMMING      %%%%%%%%%%

%%%%%%%%%%%%%%%%%%%%%%%%%%%%%%%%%%%%%%%%%%%%%%%%%%%%%%%%%%%%%%%%%%%%%%%%%%%

\subsection{Convergence results for linear programming}\label{S:LP}

Let us consider the specific case of a  linear program
$$
\min_{x\in \reels^n}\{\langle c,x\rangle\mid Bx\geq d,\: Ax= b\},
\leqno{(LP)}
$$
where $A$ and $b$ are as in section \ref{S:problem}, $c\in
\reels^n$,  $B$ is a $p\times n$ full rank real matrix with
$p\geq n$ and $d\in\R^p$. We assume that the optimal set satisfies
\begin{equation}\label{E:LP}
S(LP)\hbox{ is nonempty and bounded},
\end{equation}
and there exists a  Slater point $x^0\in\R^n$, $Bx^0>d$ and
$Ax^0=b$. Take the
Legendre function
\begin{equation}\label{E:defhLP}
h(x)=\sum_{i=1}^n\theta(g_i(x)),\quad g_i(x)= \<B_i,x\ra-d_i,
\end{equation}
where $B_i\in\RR^n$ is the $i$th-row of $B$  and the Legendre
kernel $\theta$ satisfies $(H_1)$.  By (\ref{E:LP}),  $(WP_1)$ holds and
therefore $\HS$ is well-posed due
to Theorem \ref{T:existence}. Moreover,  $x(t)$  is bounded
% (see Remark \ref{R:existence}) 
and all its cluster points belong
to $S(LP)$ by Proposition \ref{P:convergence}.
The variational property
(\ref{E:viscosity}) ensures the convergence of $x(t)$ and gives a
variational characterization of the limit as well. Indeed, we have the
following result:
\begin{proposition}\label{P:selectionLP}
Let $h$ be given by {\rm (\ref{E:defhLP})} with $\theta$
satisfying $(H_1)$. Under {\rm (\ref{E:LP})}, $\HS$ is well-posed
and $x(t)$ converges as $t\to+\infty$ to the
unique  solution $x^*$ of
\begin{equation}\label{E:selectionLP}
\min_{x\in S(LP)}\sum\limits_{i\notin I_0}
D_{\theta}(g_i(x),g_i(x^0)),
\end{equation}
where $I_0=\{i\in I\mid g_i(x)=0\mbox{ for all } x\in S(LP)\}$.
\end{proposition}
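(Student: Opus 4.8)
The plan is to establish two things: (1) that $x(t)$ converges (not merely that its cluster points lie in $S(LP)$, which is already known from Proposition~\ref{P:convergence}), and (2) that the limit is precisely the minimizer of the partial $D_\theta$-sum in (\ref{E:selectionLP}). The delicate point is that, unlike in Corollary~\ref{C:selection}, here $h$ need not be a Bregman function with zone $C$, so I cannot directly invoke Lemma~\ref{L:Bregman}. The separable structure $h(x)=\sum_i\theta(g_i(x))$ together with the splitting of the index set into $I_0$ and its complement is exactly what will compensate for this.

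First I would exploit the viscosity characterization. By (\ref{E:viscosity}), for every $t$ the point $x(t)$ minimizes $\langle c,x\rangle+\tfrac{1}{t}D_h(x,x^0)$ over $\A$. Since $D_h(x,x^0)=\sum_{i\in I}D_\theta(g_i(x),g_i(x^0))$ by the separability of $h$ and the chain rule, I would compare $x(t)$ against any fixed $\bar x\in S(LP)$: optimality of $\bar x$ gives $\langle c,x(t)\rangle\ge\langle c,\bar x\rangle$, and the viscosity inequality then forces
\begin{equation}\label{E:planbound}
\sum_{i\in I}D_\theta(g_i(x(t)),g_i(x^0))\le\sum_{i\in I}D_\theta(g_i(\bar x),g_i(x^0)).
\end{equation}
For $i\in I_0$ one has $g_i(\bar x)=0$, so those terms on the right are finite but may force $g_i(x(t))\to0$; for $i\notin I_0$ the right side is finite and uniformly bounded. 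This bound, combined with the boundedness of $x(t)$ from (\ref{E:LP}) via $(WP_1)$, is the engine of the argument.

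Next I would pass to a cluster point $x^*\in S(LP)$ along some $t_j\to+\infty$, which exists by the boundedness and Proposition~\ref{P:convergence}. The goal is to show $x^*$ solves (\ref{E:selectionLP}) and that it is the unique such solution, which then forces full convergence. For the indices $i\notin I_0$, the functions $g_i$ are affine and $\theta$ is strictly convex on $(0,\infty)$ with $\lim_{s\to0^+}\theta'(s)=-\infty$; I expect that along any convergent subsequence $D_\theta(g_i(x(t_j)),g_i(x^0))\to D_\theta(g_i(x^*),g_i(x^0))$ whenever $g_i(x^*)>0$, using continuity of $\theta$ on $(0,\infty)$, while for indices where $g_i(x^*)=0$ one must handle the boundary carefully — this is where the barrier property $\theta'\to-\infty$ matters. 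The minimality of $x^*$ in (\ref{E:selectionLP}) should follow by taking the limit in (\ref{E:planbound}) restricted to $i\notin I_0$ and discarding the $I_0$-terms (which are nonnegative), comparing against an arbitrary competitor in $S(LP)$; strict convexity of $\sum_{i\notin I_0}D_\theta(g_i(\cdot),g_i(x^0))$ on $S(LP)$, inherited from strict convexity of $\theta$ together with the fact that the $g_i$, $i\notin I_0$, separate points of $S(LP)$ (this uses that $g_i\equiv0$ on $S(LP)$ exactly for $i\in I_0$), yields uniqueness of the minimizer.

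The main obstacle, as I see it, is the boundary behavior for indices $i\notin I_0$ at which the limit satisfies $g_i(x^*)=0$: such indices exist precisely when $x^*$ lies on a face of the feasible polytope not contained in $S(LP)$, and there $\theta(g_i(x(t)))$ may blow up or the $D_\theta$-term may fail to be lower semicontinuous in the naive way. I would resolve this by showing that $I_0$ is exactly the set of constraints active on all of $S(LP)$, so that for $i\notin I_0$ there is some optimal point with $g_i>0$; the finiteness of the right-hand side of (\ref{E:selectionLP}) at such competitors, together with the coercivity-type bound (\ref{E:planbound}) and the convexity of $D_\theta$ in its first argument, should rule out escape to the boundary and secure the convergence $x(t)\to x^*$ globally rather than subsequentially. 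The uniqueness from strict convexity then upgrades ``every cluster point solves (\ref{E:selectionLP})'' to genuine convergence.
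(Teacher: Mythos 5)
Your outline works only in the case $\theta(0)<+\infty$, and that case is not the real content of the proposition. The hypothesis $(H_1)$ allows interior barrier kernels with $\dom\theta=(0,\infty)$, e.g.\ $\theta_1(s)=-\ln s$, and this is precisely why the statement is formulated with the partial sum over $i\notin I_0$. Your engine is the comparison bound $\sum_{i\in I}D_{\theta}(g_i(x(t)),g_i(x^0))\le\sum_{i\in I}D_{\theta}(g_i(\bar x),g_i(x^0))$ against a \emph{fixed} $\bar x\in S(LP)$; but if $\theta(0)=+\infty$ and $I_0\neq\emptyset$, then for every $i\in I_0$ the right-hand term $D_{\theta}(g_i(\bar x),g_i(x^0))=D_{\theta}(0,g_i(x^0))$ equals $+\infty$ --- your claim that these terms ``are finite'' is false for such kernels --- so the bound reads ``$\le+\infty$'' and carries no information. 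Your closing discussion of ``boundary behavior'' concerns lower semicontinuity of the left-hand terms at indices $i\notin I_0$ with $g_i(x^*)=0$, which is not the issue; the fatal point is the infinite right-hand side, and nothing in your plan repairs it. (When $\theta(0)<+\infty$ your argument is fine, but then the $I_0$-terms are constant on $S(LP)$, minimizing the full sum is the same as minimizing the partial sum, and the result is essentially Corollary \ref{C:selection}.)

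The paper closes exactly this gap with a \emph{moving} competitor rather than a fixed one. Given a cluster point $\bar x$ of $x(t_j)$ and the unique minimizer $x^*$ of (\ref{E:selectionLP}) --- which lies in $\ri S(LP)$, so $g_i(x^*)>0$ for all $i\notin I_0$ --- one tests the viscosity inequality (\ref{E:viscosity}) at $x^*(t):=x(t)-\bar x+x^*\in\A$. Since the $g_i$ are affine, $g_i(x^*(t))=g_i(x(t))-g_i(\bar x)+g_i(x^*)$: for $i\in I_0$ this equals $g_i(x(t))>0$, so the $I_0$-terms are finite and \emph{identical} on both sides and cancel; for $i\notin I_0$ one has $g_i(x^*(t_j))\to g_i(x^*)>0$, so the right-hand side is finite for large $j$. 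Letting $j\to\infty$ (using $\<c,x(t)\ra=\<c,x^*(t)\ra$) yields $\sum_{i\notin I_0}D_{\theta}(g_i(\bar x),g_i(x^0))\le\sum_{i\notin I_0}D_{\theta}(g_i(x^*),g_i(x^0))$, whence $\bar x=x^*$ by the uniqueness argument you did get right (strict convexity of $\theta$ plus $B$ of full rank). Some such device is indispensable: without it, your outline cannot close in the barrier case.
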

\begin{proof}  Assume that $S(LP)$ is not a
singleton, otherwise there is nothing to prove. The
relative interior $\ri S(LP)$ is nonempty and moreover $\ri
S(LP)=\{x\in\reels^n\mid
g_i(x)=0\mbox{ for }i\in I_0,\: g_i(x)>
0\mbox{ for }i\not\in I_0,\: Ax=b\}$. By compactness of $S(LP)$ and strict
convexity of $\theta\circ g_i$, there exists a unique solution
$x^*$ of (\ref{E:selectionLP}). Indeed, it is easy to see that $x^*\in
\ri(LP)$.  Let $\bar x\in S(LP)$ and $t_j\to+\infty$ be
such that $x(t_j)\to\bar x$. It suffices to prove that $\bar x= x^*$. When
$\theta(0)<+\infty$, the latter follows by the same arguments as in
Corollary \ref{C:selection}. When  $\theta(0)=+\infty$,  the proof of
\cite[Theorem 3.1]{ACH97} can be adapted to our setting (see also
\cite[Theorem 2]{IuS99}). Set $x^*(t)=x(t)-\bar x+x^*$. Since
$Ax^*(t)=b$ and $D_h(x,x^0)=\sum_{i=1}^m
D_{\theta}(g_i(x),g_i(x^0))$, (\ref{E:viscosity}) gives
\begin{equation}\label{E:visco}
\langle c,x(t)\rangle+\frac{1}{t}\sum_{i=1}^m
D_{\theta}(g_i(x(t)),g_i(x^0))\leq\langle
c,x^*(t)\rangle+\frac{1}{t}\sum_{i=1}^m
D_{\theta}(g_i(x^*(t)),g_i(x^0)).
\end{equation}
But $\langle c,x(t)\rangle=\langle c,x^*(t)\rangle$ and $\forall
i\in I_0$, $g_i(x^*(t))=g_i(x(t))>0$. Since $x^*\in\ri S(LP)$, for
all $i\notin I_0$ and $j$ large enough, $g_i(x^*(t_j))> 0$. Thus,
the right-hand side of (\ref{E:visco}) is finite at $t_j$, and it
follows that $\sum\limits_{i\notin I_0}D_{\theta}(g_i(\bar
x),g_i(x^0))\leq\sum\limits_{i\notin I_0}
D_{\theta}(g_i(x^*),g_i(x^0)).$ Hence, $\bar x= x^*$. \end{proof}

{\bf Rate of convergence.} We turn now to the case where there is no equality
constraint so that the linear program is
\begin{equation}\label{E:lp}
\min_{x\in\RR^n}\{\<c,x\ra\mid Bx\geq d\}.
\end{equation}
We assume
that (\ref{E:lp}) admits a unique solution $a$ and we study  the
rate of convergence  when $\theta$ is a Bregman function with
zone $\R_{+}$. To apply Proposition \ref{P:rate1}, we need:
\begin{lemma}\label{L:norm}
Set $C=\{x\in\R^n|Bx> d\}$. If {\rm (\ref{E:lp})} admits a unique
solution $a\in\R^n$ then  $\exists k_0>0$,  $\forall y\in
\overline{C}$, $\langle c,y-a\rangle \geq k_0\N(y-a)$, where $\N
(x)=\sum _{i\in I}  |\langle B_i , x\rangle|$ is  a norm on
$\R^n$.
\end{lemma}
\begin{proof} Set $ I_0=\{i\in I\mid \<B_i,a\ra=d_i\}$.  The optimality
conditions for $a$  imply the existence  of a {\em multiplier}
vector
$\lambda \in \R^p_+$ such that $\lambda _i [d_i-\< B_i,a\ra]=0$,
$\forall i\in I,$ and $c=\sum_{i\in I} \lambda_i B_i$. Let
$y\in \overline{C}$. We deduce that $ \< c,y-a \ra=N(y-a)$ where
$N(x)=\sum_{i\in I_0} \lambda_i |\< B_i,x\ra|$. By uniqueness of the optimal
solution, it is easy to see that
${\rm span}\{B_i\mid i\in I_0 \}=\reels^n$, hence $N$ is a norm
on $\R^n$. Since $\N(x)=\sum _{i\in I}  |\langle B_i , x\rangle|$ is also  a
norm on $\R^n$ (recall that $B$ is a full rank matrix), we deduce that
$\exists k_0$ such that $N(x)\geq k_0\N(x)$. \end{proof}

  The following lemma is a sharper version of Proposition
\ref{P:rate1} in the linear
context.
\begin{lemma}
Under the assumptions of Proposition {\rm \ref{P:selectionLP}},
assume in addition that  $\theta$  is  a Bregman function with
zone $\R_{-}$ and that there exist $\alpha>0$, $\beta\geq 1$ and
$\eps>0$ such that
\begin{equation}
\label{comp} \forall s\in (0,\eps), \: \alpha
D_{\theta}(0,s)^{\beta}\leq s.
\end{equation}
Then there exists positive constants $K,L,M$ such that for all
$t>0$ the trajectory of $(H$-$SD)$ satisfies
$D_h(a,x(t))\leq Ke^{-Lt}$ if $\beta=1$, and $D_h(a,x(t))\leq
M/t^{\frac{1}{\beta-1}}$  if $\beta>1$.
\end{lemma}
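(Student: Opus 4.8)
The plan is to reduce the statement to the abstract rate estimate of Proposition \ref{P:rate1} by verifying that the growth condition $(GC)$ holds on a neighborhood of the unique optimum $a$, with the prescribed exponent $\beta$ and some constant $\alpha'>0$. Once this is done the hypotheses of Proposition \ref{P:rate1} are all met: $f$ is linear hence convex, $S(LP)=\{a\}$, $h$ is a Bregman function with zone $C$ satisfying $(H_0)$ (this follows from $\theta$ being a Bregman function and $B$ having full rank, so the non-degeneracy condition (\ref{E:nondeg}) holds, exactly as in Section \ref{S:Examples}), and $(GC)$ is satisfied. Proposition \ref{P:rate1} then yields the exponential bound when $\beta=1$ and the polynomial bound when $\beta>1$, with suitable positive constants $K,L,M$. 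The convergence $x(t)\to a$ needed to make the local condition $(GC)$ effective along the trajectory is supplied by Proposition \ref{P:selectionLP}.

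To verify $(GC)$, I would write $f(x)-f(a)=\langle c,x-a\rangle$ and apply Lemma \ref{L:norm} to get $f(x)-f(a)\geq k_0\N(x-a)=k_0\sum_{i\in I}|u_i|$, where $u_i=\langle B_i,x-a\rangle=g_i(x)-g_i(a)$. I then split $I$ according to whether the constraint is active at $a$: for $i\in I_0$ one has $g_i(a)=0$ and $u_i=g_i(x)>0$, while for $i\notin I_0$ one has $g_i(a)>0$. Correspondingly I decompose $D_h(a,x)=P+Q$ with $P=\sum_{i\in I_0}D_\theta(0,g_i(x))$ and $Q=\sum_{i\notin I_0}D_\theta(g_i(a),g_i(x))$. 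For the active part, the compatibility hypothesis (\ref{comp}) gives $u_i\geq\alpha D_\theta(0,u_i)^{\beta}$ as soon as $x$ is close enough to $a$ that $g_i(x)<\eps$; summing over $i\in I_0$ and using the power-mean inequality $\sum_{i\in I_0}a_i^{\beta}\geq|I_0|^{1-\beta}\left(\sum_{i\in I_0}a_i\right)^{\beta}$ yields $\sum_{i\in I_0}u_i\geq c_2 P^{\beta}$. For the inactive part, $\theta$ is of class ${\cal C}^2$ near each $g_i(a)>0$, so a Taylor expansion gives $D_\theta(g_i(a),g_i(x))\leq C_1 u_i^2\leq C_1|u_i|$ for $x$ near $a$, whence $\sum_{i\notin I_0}|u_i|\geq c_3 Q$.

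Combining these two bounds with Lemma \ref{L:norm} produces $f(x)-f(a)\geq C_2 P^{\beta}+C_3 Q$ on a neighborhood of $a$, for positive constants $C_2,C_3$ absorbing $k_0$. When $\beta=1$ this is immediately at least $\min(C_2,C_3)(P+Q)=\min(C_2,C_3)\,D_h(a,x)$, giving $(GC)$ with exponent $1$. When $\beta>1$, I would use the convexity inequality $(P+Q)^{\beta}\leq 2^{\beta-1}(P^{\beta}+Q^{\beta})$ together with $P^{\beta}\leq(f(x)-f(a))/C_2$ and, since $Q\to 0$ as $x\to a$, the elementary bound $Q^{\beta}\leq Q\leq(f(x)-f(a))/C_3$ valid near $a$; this gives $f(x)-f(a)\geq\alpha' D_h(a,x)^{\beta}$, i.e. $(GC)$ with exponent $\beta$. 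In either case Proposition \ref{P:rate1} concludes the proof.

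The main obstacle I anticipate is the bookkeeping around the two different scales of decay of $D_h$: the active constraints ($i\in I_0$) control the leading behaviour through (\ref{comp}), while the inactive ones contribute only a quadratically small remainder $Q$. The delicate point is to show, for $\beta>1$, that this remainder does not spoil the power-$\beta$ estimate, and this is precisely where the smallness of $Q$ near $a$ (so that $Q^{\beta}\leq Q$) is exploited; the quadratic Taylor bound there is legitimate because $g_i(a)>0$ keeps $\theta$ in its smooth regime. One must also keep track of the fact that $I_0\neq\emptyset$ (indeed $\operatorname{span}\{B_i\mid i\in I_0\}=\R^n$ by the uniqueness argument in Lemma \ref{L:norm}), which is what makes the active part dominate.
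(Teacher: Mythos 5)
Your proposal is correct and follows essentially the same route as the paper: both reduce the statement to Proposition \ref{P:rate1} by verifying $(GC)$, both start from Lemma \ref{L:norm}, and both treat the active indices via hypothesis (\ref{comp}) and the inactive ones via a Taylor expansion of $D_\theta(g_i(a),\cdot)$ combined with the trick $D^{\beta}\leq D$ when $D\leq 1$. The only difference is organizational—the paper proves one uniform per-index inequality $\mu D_{\theta}(r_0,s)^{\beta}\leq |r_0-s|$ for all $r_0\geq 0$ and then sums, while you split $D_h(a,x)$ into active and inactive parts with different powers and recombine at the end—so your write-up is, if anything, slightly more explicit about the summation step the paper leaves implicit.
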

\begin{proof} By Lemma \ref{L:norm},  there exists $k_0$ such that
for all $t>0$,
\begin{equation}
\label{restim} \langle c,x(t)-a\rangle \geq\sum
_{i\in I} k_0 |\langle B_i, x(t) \ra -\<B_i,a\rangle|.
\end{equation}  Now, if  we  prove that $\exists \lambda>0$ such that
\begin{equation}
\label{restim2} |\langle B_i, x(t) \ra -\<B_i,a\rangle|\geq
\lambda
D_{\theta}(\<B_i,a\rangle-d_i, \langle B_i, x(t) \ra -d_i)
\end{equation}
for all $i\in I$ and for $t$ large enough, then from
(\ref{restim}) it follows that $f(\cdot)=\< c,\cdot\ra$ satisfies
the assumptions of Proposition \ref{P:rate1}  and the conclusion follows
easily.
Since $x(t)\rightarrow a$, to prove (\ref{restim2}) it suffices
to show that $\forall r_0 \geq 0$, $\exists \eta, \mu>0$ such that
$\forall s$, $|s-r_0|<\eta$,
$\mu D_{\theta}(r_0,s)^{\beta} \leq |r_0-s|.$
The case where $r_0=0$ is a direct consequence of (\ref{comp}).
Let $r_0>0$. An easy computation yields $\frac{d^2}{ds^2}
D_{\theta}(r_0, s)_{|s=r_0}=\theta '' (r_0),$ and by Taylor's
expansion formula
\begin{equation}
\label{Taylor} D_{\theta}(r_0, s)=\frac{\theta ''
(r_0)}{2}(s-r_0)^2+o(s-r_0)^2
\end{equation}
with  $\theta '' (r_0)>0$ due to $(H_1)$(iii). Let $\eta$ be such
that $\forall s$, $|s-r_0|<\eta$, $s>0$, $D_{\theta}(r_0, s) \leq
\theta '' (r_0)(s-r_0)^2$ and $D_{\theta}(r_0, s) \leq  1$; since
$\beta \geq 1$, $D_{\theta}(r_0, s) ^{\beta}\leq D_{\theta}(r_0,
s) \leq \theta '' (r_0)|s-r_0|$. \end{proof}

To obtain  Euclidean estimates, the functions $s\mapsto
D_{\theta}(r_0, s)$, $r_0 \in \R _+$ have to be locally compared
to $s\mapsto |r_0-s|$. By (\ref{Taylor}) and the fact that
$\theta '' >0$, for each $r_0>0$ there exists $K,\eta>0$ such
that $ |r_0-s|\leq K\sqrt{D_{\theta}(r_0, s)}, \:\forall s,
\:|r_0-s|< \eta.$ This shows that, in practice, the Euclidean
estimate depends only on a property of the type
(\ref{comp}).  Examples:\\
$\bullet$ The Boltzmann-Shannon entropy  $\theta_3 (s)=s\ln(s)-s$ and
$\theta_6 (s)=s\ln s$ satisfy $D_{\theta _i}(0,s)=s$, $s>0$; hence
  for some $K,L>0$, $|x(t)-a|\leq K e^{-Lt}$, $\forall t\geq0$.\\
$\bullet$ With either $\theta_4 (s)= -s^{\gamma}/\gamma$ or
$\theta _5 (s)= (\gamma s-s^{\gamma})/(1-\gamma)$,
$\gamma \in (0,1)$, we have $D_{\theta_i}(0,s)=(1+1/\gamma)s^{\gamma}$,
$s>0$;
hence $|x(t)-a|\leq K/t^{\frac{\gamma}{2-2\gamma}}$, $\forall t>0.$

%%%%%%%%%%%%%%%%%%%%%%%%%%%%%%%%%%%%%%%%%%%%%%%%%%%%%%%%%%%%%%%%%%%%%

%%%%%%%%%%%%%%%%%%%%    DUAL CONVERGENCE            %%%%%%%%%%%%%%%%%

%%%%%%%%%%%%%%%%%%%%%%%%%%%%%%%%%%%%%%%%%%%%%%%%%%%%%%%%%%%%%%%%%%%%%%

\subsection{Dual convergence}\label{S:dual}
In this section we focus on the  case $C=\R^n_{++}$, so that the
minimization problem is
$$
\min \{f(x)\mid x\geq 0,\; Ax=b\}. \leqno (P)
$$
We assume  \begin{equation}\label{A1}
\hbox{$f$ is
convex  and }S(P)\neq \emptyset,
\end{equation}
together with the Slater condition
\begin{equation}\label{A2}
\exists x^0\in \R^n,\; x^0>0,\; Ax^0=b.
\end{equation}
In convex
optimization theory, it is usual to associate with $(P)$ the {\em
dual} problem given by
$$
\min \{p(\lambda)\mid \lambda\geq 0\}, \leqno (D)
$$
where $p(\lambda)=\sup\{\<\lambda,x\ra-f(x)\mid Ax=b\}$. For many
applications,  dual solutions are as important as primal ones. In
the particular case of a linear program where $f(x)=\<c,x\ra$ for
some $c\in\R^n$, writing $\lambda=c+A^Ty$ with $y\in\R^m$ the
linear dual problem may  equivalently be expressed as $ \min
\{\<b,y\ra\mid A^Ty+c\geq 0\}$. Thus, $\lambda$ is interpreted as
a vector of {\em slack} variables for the dual inequality
constraints. In the general case,  $S(D)$ is nonempty and bounded
under (\ref{A1}) and (\ref{A2}), and moreover
\(S(D)=\{\lambda\in\R^n\mid \lambda\geq 0,\; \lambda\in\nabla
f(x^*)+\Im A^T,\; \<\lambda,x^*\ra=0\}\), where $x^*$ is any
solution of $(P)$; see for instance \cite[Theorems VII.2.3.2 and
VII.4.5.1]{HiL96}.\\
Let us introduce  a Legendre kernel $\theta$ satisfying $(H_1)$ and
define
  \begin{equation}\label{E:hdual}
h(x)=\sum_{i=1}^n \theta(x_i).
\end{equation}
Suppose that $\HS$ is well-posed. Integrating the differential
inclusion (\ref{E:cont-prox}), we obtain
\begin{equation}\label{E:opt-dual}
\lambda(t)\in c(t) +{\rm Im}A^T,
\end{equation}
where $
c(t)=\frac{1}{t}\int_0^t \nabla f(x(\tau))d\tau$
  and $\lambda(t)$ is the {\em dual trajectory } defined by
\begin{equation}\label{E:dual-traj}
\lambda(t)=\frac{1}{t}[\nabla h(x^0)-\nabla h(x(t))].
\end{equation}
Assume that $x(t)$ is bounded. From (\ref{A1}), it follows that
$\nabla f$ is constant on $S(P)$, and then  it is easy to see that
$\nabla f(x(t))\to\nabla f(x^*)$ as $t\to+\infty$ for any $x^*\in
S(P)$. Consequently,  $c(t)\to\nabla f(x^*)$. By
(\ref{E:dual-traj}) together with \cite[Theorem 26.5]{Roc70}, we have
$
x(t)=\nabla h^*(\nabla h(x^0)-t\lambda(t)),$ where the Fenchel
conjugate $h^*$ is given by $
h^*(\lambda)=\sum_{i=1}^n \theta^*(\lambda_i). $
Take any solution $\widetilde{x}$ of $A\widetilde{x}=b$. Since
$Ax(t)=b$, we have $\widetilde x-\nabla h^*(\nabla
h(x^0)-t\lambda(t))\in \Ker A$. On account of (\ref{E:opt-dual}),
$\lambda(t)$
is the unique optimal solution of
\begin{equation}\label{E:penalty}
\lambda(t)\in \argmin\left\{\<\widetilde{x},\lambda\ra+\frac{1}{t}
\sum_{i=1}^n \theta^*(\theta'(x^0_i)-t\lambda_i)\mid \lambda\in c(t)+{\rm
Im}A^T \right\}.
\end{equation}
By $(H_1)$(iii), $\theta'$ is increasing in $\R_{++}$. Set \(
\eta=\lim_{s\to +\infty}\theta'(s)\in (-\infty,+\infty]\). Since
$\theta^*$ is a Legendre type function, $\inte\dom
\theta^*=\dom\partial\theta^*=\Im \partial
\theta=(-\infty,\eta)$. From $(\theta^*)'=(\theta')^{-1}$, it
follows that $\lim_{u\to-\infty}(\theta^*)'(u)=0$ and
$\lim_{u\to\eta^-}(\theta^*)'(u)=+\infty$. Consequently,
(\ref{E:penalty}) can be interpreted as a {\em penalty
approximation scheme} of the dual problem $(D)$, where the dual
positivity constraints are penalized by a separable strictly
convex function. Similar schemes have been treated in
\cite{ACH97,Com,IuM00}. Consider the additional condition
\begin{equation}\label{A3}
\hbox{Either } \theta(0)<\infty, \hbox{ or } S(P) \hbox{ is
bounded, or $f$ is linear.}
\end{equation}
As a direct consequence of \cite[Propositions 10 and 11]{IuM00},
we obtain that under {\rm (\ref{A1}), (\ref{A2}), (\ref{A3})}
and   $(H_1)$, $\{\lambda(t)\mid t\to+\infty\}$ is bounded and
its cluster points belong to $S(D)$. The convergence of
$\lambda(t)$ is more difficult to establish. In fact, under some
additional conditions on $\theta^*$ (see \cite[Conditions
$(H_0)$-$(H_1)$]{Com} or \cite[Conditions (A7) and (A8)]{IuM00})
it is possible to show that $\lambda(t)$ converges to a
particular element of the dual optimal set (the
``$\theta^*$-center" in the sense of \cite[Definition 5.1]{Com} or
the $D_h(\cdot,x^0)$-center as defined in \cite[pag.
616]{IuM00}), which is characterized as the unique solution of a
{\em nested hierarchy} of optimization problems on the dual
optimal set. We will not develop this point here. Let us only
mention that  for all the examples of section \ref{S:Examples},
$\theta_i^*$ satisfies such additional conditions and
consequently:
\begin{proposition} Under {\rm (\ref{A1}), (\ref{A2})} and {\rm (\ref{A3})}, for 
each of the explicit Legendre kernels  given in  section
{\rm \ref{S:Examples}},  $\lambda(t)$ given by {\rm
(\ref{E:dual-traj})}  converges to a particular dual solution.
\end{proposition}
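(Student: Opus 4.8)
The plan is to reduce the statement to a verification of the additional hypotheses on $\theta^*$ invoked in the discussion preceding the proposition, namely those of \cite[Conditions $(H_0)$-$(H_1)$]{Com} (equivalently \cite[Conditions (A7) and (A8)]{IuM00}), and then to check them explicitly for the six kernels $\theta_1,\dots,\theta_6$ of Section \ref{S:Examples}. Most of the structural work is already in place: by the penalty reformulation (\ref{E:penalty}), $\lambda(t)$ is the unique minimizer of a penalty approximation of the dual problem $(D)$ in which the constraint $\lambda\geq 0$ is smoothed through the separable term built from $\theta^*$, whose marginal satisfies $\lim_{u\to-\infty}(\theta^*)'(u)=0$ and $\lim_{u\to\eta^-}(\theta^*)'(u)=+\infty$, with $\eta=\lim_{s\to+\infty}\theta'(s)$. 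Under the standing assumptions (\ref{A1}), (\ref{A2}), (\ref{A3}) and $(H_1)$, boundedness of $\{\lambda(t)\}$ and inclusion of its cluster points in $S(D)$ are already granted by \cite[Propositions 10 and 11]{IuM00}; the only thing left for the proposition is to promote this to genuine convergence, and the cited theorems do exactly that once the extra conditions on $\theta^*$ hold.

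First I would write each conjugate $\theta_i^*$ in closed form and identify its domain $\inte\dom\theta_i^*=(-\infty,\eta_i)$. For the Boltzmann--Shannon and $x\log x$ kernels one gets $\theta_3^*(u)=e^{u}$ and $\theta_6^*(u)=e^{u-1}$ with $\eta=+\infty$, so the penalty is everywhere finite; for the power kernels $\theta_4,\theta_5$ one obtains algebraic expressions on a genuine half-line, with $\eta_4=0$ and $\eta_5=\gamma/(1-\gamma)$; for the interior barriers $\theta_1,\theta_2$ one finds $\theta_1^*(u)=-1-\ln(-u)$ and $\theta_2^*(u)=-2\sqrt{-u}$ on $(-\infty,0)$. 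Then I would confirm, on each of these, the conditions demanded by \cite{Com} and \cite{IuM00}: smoothness and strict convexity on $(-\infty,\eta_i)$, the prescribed endpoint behaviour of $(\theta_i^*)'$, and the second-order growth estimates that ensure existence and uniqueness of the $\theta^*$-center (equivalently the $D_h(\cdot,x^0)$-center), characterized as the limit of the nested hierarchy of optimization problems over $S(D)$. Convergence of $\lambda(t)$ to that center would then follow at once.

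The main obstacle is this last verification, which must be handled uniformly across two qualitatively different situations, distinguished by the value of $\theta(0)$. When $\theta(0)<+\infty$ (the Bregman kernels $\theta_3,\dots,\theta_6$), the first alternative of (\ref{A3}) holds automatically and $\theta^*$ is comparatively tame; here one may additionally exploit that, as observed in Section \ref{S:Examples}, each associated $h$ is a Bregman function with zone $\R^n_{++}$. When $\theta(0)=+\infty$ (the genuine barriers $\theta_1,\theta_2$) the delicate points are twofold: one must invoke the ``$S(P)$ bounded or $f$ linear'' alternative of (\ref{A3}) to keep $c(t)$ and the argument $\nabla h(x^0)-t\lambda(t)$ within $\dom\theta^*$ under control, and one must match the blow-up of $(\theta^*)'$ at the finite endpoint $\eta=0$ precisely to the growth conditions of the cited convergence theorems. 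Once these checks are completed for all six kernels---a finite, explicit computation in each case---the assertion follows directly from \cite{Com} (or \cite{IuM00}).
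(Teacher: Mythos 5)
Your proposal follows essentially the same route as the paper: the paper also reduces the statement to the penalty interpretation (\ref{E:penalty}), invokes \cite[Propositions 10 and 11]{IuM00} for boundedness and optimality of cluster points, and then asserts convergence to the $\theta^*$-center by appealing to \cite{Com} and \cite{IuM00} under their additional conditions on $\theta^*$, explicitly declining to carry out the kernel-by-kernel verification ("We will not develop this point here"). Your sketch is, if anything, more detailed than the paper's, since you write out the conjugates $\theta_i^*$ and flag the two regimes $\theta(0)<+\infty$ versus $\theta(0)=+\infty$; the computations you give (e.g. $\theta_1^*(u)=-1-\ln(-u)$, $\theta_3^*(u)=e^u$, $\eta_4=0$, $\eta_5=\gamma/(1-\gamma)$) are correct.
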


%%%%%%%%%%%%%%%%%%%%%%%%%%%%%%%%%%%%%%%%%%%%%%%%%%%%%%%%%%%%%%%%%%%%%%%%%%
%%              LEGENDRE TRANSFORM COORDINATES                  %%%%%%%%%%
%%%%%%%%%%%%%%%%%%%%%%%%%%%%%%%%%%%%%%%%%%%%%%%%%%%%%%%%%%%%%%%%%%%%%%%%%%
\section{Legendre transform coordinates}\label{S:legendre-transform}
%%%%%%%%%%%%%%%%%%%%%%%%%%%%%%%%%%%%%%%%%%%%%%%%%%%%%%%%%%%%%%%%%%%%%%%%%%%
\subsection{Legendre functions on affine subspaces}

The first objective of  this section is to slightly generalize
the notion of Legendre type function to the case of functions
whose domains  are contained in an affine subspace of $\RR^n$. We
begin by noticing that the Legendre type property does not depend
on canonical coordinates.

\begin{lemma}\label{L:leg-coord} Let $g\in \Gamma_0(\RR^r)$, $r\geq 1$,  and
$T:\RR^r\to\RR^r$ an
affine invertible mapping. Then $g$ is of  Legendre type iff
$g\circ T$ is of Legendre type.
\end{lemma}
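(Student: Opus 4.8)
The plan is to reduce the Legendre type property to its two defining components from Definition~\ref{D:legendre} --- essential smoothness and strict convexity on the interior of the domain --- and verify that each is preserved under composition with an affine invertible map $T$. Write $T(x)=Mx+c$ with $M$ invertible, and set $\tilde g=g\circ T$. Since $T$ is a homeomorphism of $\RR^r$, it maps $\dom g$ onto $\dom\tilde g=T^{-1}(\dom g)$, carries interiors to interiors and boundaries to boundaries, and preserves convexity and lower semicontinuity; so $\tilde g\in\Gamma_0(\RR^r)$ whenever $g$ does, and by symmetry (replacing $T$ by $T^{-1}$) it suffices to prove one implication.

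First I would handle strict convexity on the interior. On $\inte\dom\tilde g=T^{-1}(\inte\dom g)$, the chain rule gives $\nabla\tilde g(x)=M^T\nabla g(T(x))$, so $\tilde g$ is differentiable there exactly when $g$ is differentiable on $\inte\dom g$. For strict convexity, observe that for distinct $x_0,x_1$ in the (convex) set $\inte\dom\tilde g$, the points $T(x_0),T(x_1)$ are distinct in $\inte\dom g$, and the affine structure sends the segment $[x_0,x_1]$ onto $[T(x_0),T(x_1)]$; strict convexity of $g$ along the latter transfers verbatim to strict convexity of $\tilde g$ along the former.

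Next I would verify essential smoothness, which is the part demanding genuine care --- this is the main obstacle, since the blow-up condition $|\nabla g(x^j)|\to+\infty$ involves the \emph{Euclidean} norm of the gradient, and composition with $M^T$ distorts norms. Given a sequence $(x^j)\subset\inte\dom\tilde g$ converging to a boundary point $\bar x$ of $\dom\tilde g$, the images $y^j=T(x^j)$ lie in $\inte\dom g$ and converge to $T(\bar x)\in\bd\dom g$; essential smoothness of $g$ yields $|\nabla g(y^j)|\to+\infty$. The remaining point is to conclude $|\nabla\tilde g(x^j)|=|M^T\nabla g(y^j)|\to+\infty$. This is immediate because $M^T$ is invertible: $|M^T v|\ge |v|/\|(M^T)^{-1}\|$ for all $v$, so the Euclidean norm of $\nabla\tilde g(x^j)$ is bounded below by a positive constant times $|\nabla g(y^j)|$, forcing it to diverge.

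Finally I would assemble these observations: $\tilde g$ is differentiable on $\inte\dom\tilde g$ with gradients blowing up at the boundary (essential smoothness) and is strictly convex there (Legendre type). Since the argument is symmetric in $T$ and $T^{-1}$, the equivalence holds in both directions, completing the proof. No step requires more than the chain rule and the invertibility of $M$, so the only subtlety worth flagging explicitly is the norm-comparison estimate that salvages the blow-up condition under the linear distortion $M^T$.
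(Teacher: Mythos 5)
Your proof is correct and complete. The paper itself supplies no argument here (its proof reads ``elementary and left to the reader''), and yours is precisely the intended routine verification: transport of domain, interior and boundary under the homeomorphism $T$, strict convexity along segments, the chain rule $\nabla(g\circ T)(x)=M^{T}\nabla g(T(x))$, and the lower bound $|M^{T}v|\geq |v|/\|(M^{T})^{-1}\|$, which is indeed the one point needing explicit care since essential smoothness is stated in terms of the Euclidean norm of the gradient.
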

\begin{proof} The proof is elementary and is left to the reader.
\end{proof}
%Changing $T$ in $T^{-1}$ we see that only one of the
%implications has to be proved. Assume that $g$ is of Legendre
%type and take $Tx=Lx+x^0$ for all $x\in\RR^p$ with $x^0\in \RR^p$
%and $L$ being a $r\times r$ nonsingular real matrix. Since
%$\inte\dom g\neq \emptyset$ and $T$ is onto, $\inte\dom g\cap \Im
%T\neq \emptyset$ and by \cite[Theorem 23.9]{Roc70}, $\partial

% (g\circ T)(x)=L^T\partial g(Tx)$, for all $x\in \RR^r.$ This
%implies that $\partial (g\circ T)(x)$ is single-valued, and by
%\cite[Theorem 26.1] {Roc70}, that $g\circ T$ is essentially
%smooth. To prove that $g\circ T$ is essentially strictly convex
%it suffices to show that $g\circ T$ is strictly convex on the
%convex set $\dom
%\partial (g\circ T)=T(\dom\partial g)=T(\inte \dom g)$, which is a
%consequence of the strict convexity of $g$ on $\inte\dom g$.
%\end{proof}

  From now on,  $\A$ is the affine subspace  defined by
(\ref{E:affine-space}), whose dimension is $r=n-m$.

\begin{definition}\label{D:glegendre} A function $g\in\Gamma_0(\A)$ is said to
be of {\em Legendre type} if there exists an affine invertible
mapping $T:\A\to\RR^r$  such that $g\circ T^{-1}$ is a Legendre
type function in $\Gamma_0(\R^r)$.
\end{definition}

By Lemma \ref{L:leg-coord}, the previous  definition is
consistent.
\begin{proposition}\label{P:leg-trans2} Let $h\in\Gamma_0(\RR^n)$ be a function of
Legendre type
with $C=\inte\dom h$. If $\F=C\cap\A\neq \emptyset$ then the
restriction $h_{|_\A}$  of $h$ to $\A$ is of Legendre type and
moreover
$ {\rm int}_\A \dom h_{|_\A}=\F$ (${\rm int}_\A B$
stands for the interior of $B$ in $\A$ as a topological subspace
of $\RR^n$).
\end{proposition}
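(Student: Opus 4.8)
The plan is to reduce everything to a Euclidean parameterization of $\A$ and then verify directly the two requirements of Definition \ref{D:legendre} (strict convexity and essential smoothness) for the pulled-back function, while the assertion on the interior falls out of a relative-interior computation. Concretely, I would fix a point $x^0\in\F$ and a linear isometry $E:\R^r\to\A_0$ (its columns an orthonormal basis of $\A_0$), and set $\phi(u)=x^0+Eu$, an affine bijection $\R^r\to\A$ whose inverse plays the role of the map $T$ in Definition \ref{D:glegendre}. Put $g=h_{|_\A}\circ\phi=h\circ\phi$. Since $\phi$ is affine, $g$ is convex and lower semicontinuous, and it is proper because $\F\neq\emptyset$ furnishes a point where $h$, hence $g$, is finite; thus $g\in\Gamma_0(\R^r)$. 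By Lemma \ref{L:leg-coord} it then suffices to prove that $g$ is of Legendre type and that $\inte\dom g=\phi^{-1}(\F)$.

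For the interior, I would invoke the relative-interior calculus for intersections with affine sets. Because $\dom h$ is full dimensional, $\ri\dom h=\inte\dom h=C$, and the hypothesis $\F=C\cap\A\neq\emptyset$ means $\A$ meets $\ri\dom h$; by \cite[Theorem 6.5]{Roc70}, $\ri(\A\cap\dom h)=\A\cap C=\F$. Since $\F$ is a nonempty relatively open subset of $\A$, we have $\T{aff}(\A\cap\dom h)=\A$, so this relative interior coincides with the interior taken inside the subspace $\A$; that is, $\inteA\dom h_{|_\A}=\F$, which is precisely the second assertion. Transporting through the homeomorphism $\phi$ gives $\inte\dom g=\phi^{-1}(\F)$.

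Strict convexity of $g$ on $\phi^{-1}(\F)$ is immediate: $\F$ is convex, $h$ is strictly convex on $C\supseteq\F$, and $\phi$ is injective and affine. Differentiability of $g$ on $\phi^{-1}(\F)$ follows from the chain rule, with $\nabla g(u)=E^T\nabla h(\phi(u))$; since $E$ is an isometry, $EE^T=\Pi_{\A_0}$ and hence $|\nabla g(u)|=|\Pi_{\A_0}\nabla h(\phi(u))|$. The only real work is the gradient blow-up condition, and this is the step I expect to be the main obstacle. Given $u^j\to\bar u\in\bd\dom g$ with $u^j\in\phi^{-1}(\F)$, set $x^j=\phi(u^j)\to\bar x=\phi(\bar u)$. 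Then $x^j\in\F\subset C$ forces $\bar x\in\overline C$, while $\bar u\notin\inte\dom g=\phi^{-1}(\F)$ forces $\bar x\notin\F$; as $\bar x\in\A$ this gives $\bar x\in(\bd C)\cap\A$, a boundary point of $\dom h$. Essential smoothness of $h$ yields $|\nabla h(x^j)|\to+\infty$, but because $\nabla g(u^j)=E^T\nabla h(x^j)$ only records the $\A_0$-component, I must rule out that the blow-up occurs entirely along $\A_0^\perp$.

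Here I would argue by contradiction using the two normal-cone lemmas already proved. If $|\Pi_{\A_0}\nabla h(x^j)|=|\nabla g(u^j)|$ stayed bounded along a subsequence, then after normalizing so that $\nabla h(x^j)/|\nabla h(x^j)|\to\nu$ with $|\nu|=1$, the ratio $\Pi_{\A_0}\nabla h(x^j)/|\nabla h(x^j)|$ would tend to $0$, giving $\Pi_{\A_0}\nu=0$, i.e. $\nu\in\A_0^\perp$; meanwhile Lemma \ref{L:normal} gives $\nu\in N_{\overline C}(\bar x)$. Thus $\nu\in N_{\overline C}(\bar x)\cap\A_0^\perp=\{0\}$ by Lemma \ref{L:lem1} (applicable since $\bar x\in(\bd C)\cap\A$ and $C\cap\A=\F\neq\emptyset$), contradicting $|\nu|=1$. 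Hence $|\nabla g(u^j)|\to+\infty$, so $g$ is essentially smooth and therefore of Legendre type, which completes the proof.
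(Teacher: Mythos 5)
Your proof is correct and follows essentially the same route as the paper's: reduce to $\Gamma_0(\R^r)$ via an affine parameterization of $\A$ (justified by Lemma \ref{L:leg-coord}), obtain $\inteA\dom h_{|_\A}=\F$ by a relative-interior argument, and establish the gradient blow-up by combining Lemma \ref{L:normal} with Lemma \ref{L:lem1} to show the $\A_0$-component of $\nabla h$ cannot stay bounded. The only differences are cosmetic: your choice of an isometric parameterization makes $|\nabla g(u)|=|\Pi_{\A_0}\nabla h(\phi(u))|$ exact (the paper uses a generic nonsingular $L$ and a constant $\lambda>0$), and you phrase as a contradiction what the paper states as a $\liminf$ over the compact set of cluster points of the normalized gradients.
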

\begin{proof} From the inclusions $\F\subset \dom
h_{|_\A}\subset\overline{\F}=\overline{C}\cap\A$ and since
$\ri\overline{\F}=\F$, we conclude that $ {\rm int}_\A \dom
h_{|_\A}=\F\neq\emptyset$. Let $T:\RR^r\to\A$ be an invertible
transformation with $ Tz=Lz+x^0$ for all $z\in\RR^r$, where
$x^0\in\A$ and $L:\RR^r\to\A_0$ is a nonsingular linear mapping.
Define $k=h_{|_\A}\circ T$. Clearly, $k\in\Gamma_0(\RR^r)$. Let
us prove that $k$ is essentially smooth. We have $\dom
k=T^{-1}\dom h_{|_\A}$ and therefore $\inte\dom k=T^{-1}\F$. Since
$h$ is differentiable on $C$, we conclude that $k$ is
differentiable on $\inte\dom k$. Now, let $(z^j)\in\inte\dom k$
be a sequence that converges to a boundary point $z\in \bd\dom
k$. Then, $Tz^j\in{\rm int}_\A\dom h_{|_\A}$ and $Tz^j\to Tz\in
{\rm bd}_\A\dom h_{|_\A}\subset \bd\dom h$. Since $h$ is
essentially smooth, $|\nabla h(Tz^j)|\to +\infty$. Thus, to prove
that $|\nabla k(z^j)|\to +\infty$ it suffices to show that there
exists $\lambda>0$ such that $ |\nabla k(z^j)|\geq \lambda
|\nabla h(Tz^j)|$ for all $j$ large enough. Note that $ \nabla k
(z^j)=\nabla[h_{|_\A}\circ T](z^j)=L^*\nabla
h_{|_\A}(Tz^j)=L^*\Pi_{\A_0}\nabla h(Tz^j),$ where
$L^*:\A_0\to\RR^r$ is defined by $ \langle z,L^*x\rangle=\langle
Lz,x\rangle$, $\forall (z,x)\in \RR^r\times \A_0.$ Of course,
$L^*$ is linear with $\Ker L^*=\{0\}$. Therefore $ \frac{\nabla
k(z^j)}{|\nabla h(Tz^j)|}=L^*\Pi_{\A_0}\frac{\nabla
h(Tz^j)}{|\nabla h(Tz^j)|}.$ Let $\omega$ denote the nonempty and
compact set of cluster points of the normalized sequence $\nabla
h(Tz^j)/|\nabla h(Tz^j)|$, $j\in\mathbb{N}$. By Lemma
\ref{L:normal}, we have that $ \omega\subset\{\nu \in
N_{\overline{C}}(Tz)\: | \: |\nu|=1\},$ and consequently Lemma
\ref{L:lem1} yields $ \Pi_{\A_0}\omega\cap\{0\}=\emptyset.$ By
compactness of $\omega$, we obtain  $
\liminf_{j\to+\infty}|\Pi_{\A_0}\nabla h(Tz^j)|/|\nabla
h(Tz^j)|>0,$ which proves our claim. Finally, the strict
convexity of $k$ on $\dom \partial k=\inte\dom k=T^{-1}\F$ is a
direct consequence of the strict convexity of $h$ in $\F$. \end{proof}

%%%%%%%%%%%%%%%%%%%%%%%%%%%%%%%%%%%%%%%%%%%%%%%%%%%%%%%%%%%%%%%%%%
\subsection{Legendre transform coordinates}

%As we have already recalled in section \ref{S:legendre}, 
The prominent fact of Legendre functions theory is that
$h\in\Gamma_0(\RR^n)$ is of Legendre type iff its Fenchel 
conjugate $h^*$ is of Legendre type \cite[Theorem26.5]{Roc70}, and  
$\nabla h:\inte \dom h \to \inte\dom h^*$ is onto with 
$(\nabla h)^{-1}=\nabla h^*$. In
the case of Legendre functions on affine subspaces, we have the
following generalization:
\begin{proposition}\label{P:leg-trans1} If $g\in \Gamma_0(\A)$
is of Legendre type in the sense of Definition {\rm
\ref{D:glegendre}}, then $\nabla g({\rm int}_\A\dom g)$ is a
nonempty, open and convex subset of $\A_0$. In addition, $\nabla
g$ is a one-to-one continuous mapping from ${\rm int}_\A\dom g$
onto its image.
\end{proposition}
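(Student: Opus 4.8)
The plan is to pull everything back to $\R^r$ through the affine chart furnished by Definition \ref{D:glegendre}, apply the finite-dimensional Legendre duality \cite[Theorem 26.5]{Roc70} there, and then push the conclusions forward to $\A$, keeping careful track of how the intrinsic gradient $\nabla g$ (which takes its values in $\A_0$) transforms under the change of coordinates.

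First I would fix, by Definition \ref{D:glegendre}, an affine invertible map $T:\A\to\R^r$ such that $k:=g\circ T^{-1}\in\Gamma_0(\R^r)$ is of Legendre type, and write $T^{-1}(z)=Lz+x^0$ with $x^0\in\A$ and $L:\R^r\to\A_0$ a linear isomorphism, exactly as in the proof of Proposition \ref{P:leg-trans2}. Since $T$ is an affine homeomorphism of $\A$ onto $\R^r$ and $\dom k=T(\dom g)$, it carries $\inteA\dom g$ onto $\inte\dom k$. The crucial computation is the chain rule for $\nabla g$: introducing the Euclidean adjoint $L^*:\A_0\to\R^r$ as in that same proof, one gets, for $x=T^{-1}(z)\in\inteA\dom g$,
\begin{equation}
\nabla k(z)=L^*\nabla g(T^{-1}(z)),\qquad\textrm{equivalently}\qquad\nabla g=(L^*)^{-1}\circ\nabla k\circ T\ \ \textrm{on}\ \inteA\dom g,
\end{equation}
which in particular shows that $\nabla g$ takes its values in $\A_0$.

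Next I would invoke \cite[Theorem 26.5]{Roc70}: because $k$ is of Legendre type, so is its conjugate $k^*$, the set $\inte\dom k^*$ is nonempty, open and convex, and $\nabla k:\inte\dom k\to\inte\dom k^*$ is a homeomorphism (a continuous bijection with continuous inverse $\nabla k^*$). Reading off the displayed identity, $\nabla g$ is then the composition of the homeomorphism $T$, the continuous injection $\nabla k$, and the linear isomorphism $(L^*)^{-1}$; hence $\nabla g$ is continuous and one-to-one on $\inteA\dom g$, with image
\begin{equation}
\nabla g(\inteA\dom g)=(L^*)^{-1}\bigl(\inte\dom k^*\bigr).
\end{equation}
As the image of a nonempty, open and convex set under the linear isomorphism $(L^*)^{-1}:\R^r\to\A_0$, this set is a nonempty, open and convex subset of $\A_0$, which gives all three assertions.

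The step I expect to require the most care is the gradient transformation rule, namely that the pullback involves the adjoint $(L^*)^{-1}$ rather than $L^{-1}$. This is because $\nabla g(x)$ encodes the differential $dg(x)$ through the Euclidean metric restricted to $\A_0$, so the change of chart acts on gradients through the transpose $L^*$; once this is handled correctly---essentially the computation already done for Proposition \ref{P:leg-trans2}---the proposition reduces to Legendre duality on $\R^r$.
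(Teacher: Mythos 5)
Your proof is correct and follows essentially the same route as the paper's: transport $g$ to $\R^r$ via the affine chart of Definition \ref{D:glegendre}, apply Rockafellar's Theorem 26.5 to $k=g\circ T^{-1}$, and carry the conclusions back through the adjoint of the linear part (your $(L^*)^{-1}$ is precisely the paper's $L^*$, since the paper takes $L$ as the linear part of $T$ rather than of $T^{-1}$). The only cosmetic difference is that the paper justifies relative openness of the image in $\A_0$ via \cite[Theorem 6.6]{Roc70}, whereas you observe directly that a linear isomorphism onto $\A_0$ is a homeomorphism onto that subspace; both are valid.
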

\begin{proof} Let $Tx=Lx+z_0$ with $L:\A_0\to\RR^r$ being a linear
invertible mapping and $z_0\in\RR^p$. Set $k=g\circ
T^{-1}\in\Gamma_0(\RR^r)$, which is of Legendre type. We have
$\dom k=T\dom g$. Define $L^*:\RR^r\to \A_0$ by $ \langle
L^*z,x\rangle=\langle z,Lx\rangle$, $\forall (z,x)\in
\RR^r\times\A_0$.  We have that $ \nabla g(x)=\nabla[k\circ
T](x)=L^*\nabla k(Tx)$ for all $x\in\inteA\dom g$. Therefore
$\nabla g(\inteA\dom g) =L^*\nabla k(T\inteA\dom g)=L^*\nabla
k({\rm int}_{\RR^r}\dom k)=L^*{\rm int}_{\RR^r}\dom k^*.$ Since
${\rm int}_{\RR^r}\dom k^*$ is a nonempty, open and convex subset
of $\RR^r$ and $L^*$ is an invertible linear mapping, then
$L^*{\rm int}_{\RR^r}\dom k^*$ is an open and nonempty subset of
$\A_0$. Moreover, by \cite[Theorem 6.6]{Roc70}, we have $L^*{\rm
int}_{\RR^r}\dom k^*=\ri L^*\dom k^*.$ Consequently, $ \nabla
g(\inteA\dom g)=\ri L^*\dom k^*={\rm int}_{\A_0} L^*\dom k^*\neq
\emptyset.$ Finally, since $\nabla k:{\rm int}_{\RR^r}\dom
k\to{\rm int}_{\RR^r}\dom k^*$ is one-to-one and continuous, the
same result holds for $\nabla g=L^*\circ\nabla k\circ T$ on
$\inteA\dom g$. \end{proof}

\vspace{2ex}

In the sequel, we assume that $h$ satisfies the basic condition
$(H_0)$ and $\F=C\cap\A\neq\emptyset$. The {\em Legendre
transform coordinates mapping} on $\F$ associated with $h$ is
defined by
\begin{equation}\label{E:LTC}
\begin{array}{cccl}
\phi_h:&\F&\to&\F^*=\phi_h(\F)\\
&x&\mapsto&\phi_h(x)=\nabla(h_{|_\A})=\Pi_{\A_0}\nabla h(x).
\end{array}
\end{equation}
This definition
retrieves the Legendre transform coordinates introduced by Bayer
and Lagarias in \cite{BaL89} for the particular case of the
log-barrier on a polyhedral set.

\begin{theorem}\label{T:dphi} Under the above definitions and assumptions,  
$\F^*$ is a
convex, (relatively) open and
nonempty subset of $\A_0$, $\phi_h$ is a ${\cal C}^1$
diffeomorphism from $\F$ to $\F^*$, and for all $x\in\F$,
$d\phi_h(x)=\Pi_{\A_0}H(x)$ and
$d\phi_h(x)^{-1}=\sqrt{H(x)^{-1}}\Pi_{\sqrt{H(x)}\A_0}\sqrt{H(x)^{-1}}$,
where $H(x)=\nabla^2h(x)$.
\end{theorem}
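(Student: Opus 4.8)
The plan is to establish the three assertions of the theorem in sequence, leveraging Proposition \ref{P:leg-trans1} as the main engine. First I would observe that $\phi_h = \nabla(h_{|_\A})$ where $h_{|_\A}$ is of Legendre type on $\A$ with $\inteA \dom h_{|_\A} = \F$, by Proposition \ref{P:leg-trans2}. Applying Proposition \ref{P:leg-trans1} to $g = h_{|_\A}$ immediately yields that $\F^* = \nabla(h_{|_\A})(\F)$ is a nonempty, (relatively) open and convex subset of $\A_0$, and that $\phi_h$ is a continuous bijection from $\F$ onto $\F^*$. This disposes of the topological claims with essentially no extra work.

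The next step is to compute $d\phi_h(x)$. Writing $\phi_h(x) = \Pi_{\A_0}\nabla h(x)$ and differentiating, since $\Pi_{\A_0}$ is a fixed linear projection and $\nabla h$ has derivative $\nabla^2 h(x) = H(x)$, the chain rule gives $d\phi_h(x) = \Pi_{\A_0}H(x)$, viewed as a linear map $\A_0 \to \A_0$ (restricting the domain to $T_x\F \simeq \A_0$). To upgrade from the topological bijection to a ${\cal C}^1$ diffeomorphism via the inverse function theorem, I must check that $d\phi_h(x)$ is invertible as an endomorphism of $\A_0$. This amounts to showing $\Pi_{\A_0}H(x)|_{\A_0}$ is injective: if $v\in\A_0$ with $\Pi_{\A_0}H(x)v = 0$, then $H(x)v \in \A_0^\perp$, so $\langle H(x)v,v\rangle = 0$, and positive definiteness of $H(x)$ forces $v=0$. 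Combined with the continuity of $x\mapsto H(x)$ from $(H_0)$, the inverse function theorem then gives that $\phi_h$ is a ${\cal C}^1$ diffeomorphism.

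The main obstacle, and the step requiring genuine care, is verifying the explicit formula $d\phi_h(x)^{-1} = \sqrt{H(x)^{-1}}\,\Pi_{\sqrt{H(x)}\A_0}\,\sqrt{H(x)^{-1}}$. The strategy here is to recognize $P_x = I - H(x)^{-1}A^T(AH(x)^{-1}A^T)^{-1}A$ from (\ref{E:projH}) as the $(\cdot,\cdot)^H_x$-orthogonal projection onto $\A_0$, and to relate it to the Euclidean projection $\Pi_{\sqrt{H}\A_0}$ after the substitution $u = \sqrt{H(x)}\,v$. The key identity to prove is that for $w\in\A_0$, the map $\Pi_{\A_0}H(x)$ restricted to $\A_0$ is inverted by $\sqrt{H^{-1}}\,\Pi_{\sqrt{H}\A_0}\,\sqrt{H^{-1}}$; I would verify this by a direct composition, checking that applying $\Pi_{\A_0}H(x)$ followed by the claimed inverse returns $w$ for all $w\in\A_0$. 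Concretely, I would confirm that $\sqrt{H}\,P_x\,\sqrt{H^{-1}} = \Pi_{\sqrt{H}\A_0}$ (the conjugation of the $H$-orthogonal projection by $\sqrt{H}$ is the Euclidean orthogonal projection onto $\sqrt{H}\A_0$), and that $\Pi_{\A_0}H(x) = (\sqrt{H^{-1}}\,\Pi_{\sqrt{H}\A_0}\,\sqrt{H^{-1}})^{-1}$ on $\A_0$. The bookkeeping with where each projection lands (onto $\A_0$ versus onto $\sqrt{H}\A_0$) is the delicate part, but it reduces to the standard fact that conjugating an orthogonal projection by an invertible symmetric map yields the orthogonal projection onto the image subspace.
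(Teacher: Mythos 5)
Your proposal is correct and follows essentially the same path as the paper: Propositions \ref{P:leg-trans2} and \ref{P:leg-trans1} for the topological claims and bijectivity, the computation $d\phi_h(x)=\Pi_{\A_0}H(x)$ with the same injectivity argument ($H(x)v\in\A_0^\perp$ plus positive definiteness), and the inverse function theorem to upgrade to a ${\cal C}^1$ diffeomorphism. The only difference is that you sketch the verification of the inverse formula (via the conjugation identity $\sqrt{H}P_x\sqrt{H(x)^{-1}}=\Pi_{\sqrt{H}\A_0}$, which does work), whereas the paper states this as a lemma and leaves its proof to the reader, citing Bayer--Lagarias.
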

\begin{proof} By Propositions \ref{P:leg-trans2} and
\ref{P:leg-trans1}, $\F^*$  is a convex, open and nonempty subset
of $\A_0$ and $\phi_h$ is a continuous  bijection. By
$(H_0)$(ii), $\phi_h$ is of class ${\cal C}^1$ on $\F$ and we have for
all $x\in \F$, $d\phi_h(x)=\Pi_{\A_0}\nabla ^2 h(x)=\Pi_{\A_0}
H(x).$  Let $v\in\A_0$ be such that $d\phi_h(x)v=0$. It follows
that $H(x)v\in\A_0^\perp$ and in particular $\langle
H(x)v,v\rangle=0$. Hence, $v=0$ thanks to $(H_0)$(iii). The
implicit function theorem implies then that $\phi_h$ is a ${\cal C}^1$
diffeomorphism. The  formula concerning $d\phi_h(x)^{-1}$ is a
direct consequence of the next lemma.
\begin{lemma} Define the linear operators $L_i:\RR^n\to\RR^n$ by
$L_1=\Pi_{\A_0} H(x)$ and
$L_2=\sqrt{H(x)^{-1}}\Pi_{\sqrt{H(x)}\A_0}\sqrt{H(x)^{-1}}$. Then
$L_2L_1v=v$ for all $v\in\A_0$.
\end{lemma}

This  follows by the same method as in \cite{BaL89}, pag. 545; we
leave the proof to the reader. \end{proof}

Similarly to the classical Legendre type functions
theory, the
inverse of $\phi_h$ can be expressed in terms of Fenchel
conjugates. For that purpose, we notice that inverting $\phi_h$ is a
minimization problem. Indeed,   given $y\in\A_0$,  the problem of
finding $x\in\F$ such that $y=\Pi_{\A_0}\nabla h(x)$ is
equivalent to  $ x=\argmin\{h(z)-\<y,z\ra|z\in\A\}$, or
equivalently
\begin{equation}\label{E:inv-min}
x=\argmin\{(h+\delta_{\A})(z)-\<y,z\ra\},
\end{equation}
where $\delta_{\A}$ is the {\em indicator} of $\A$, i.e.
$\delta_{\A}(z)= 0$ if $z\in \A$ and $+\infty$ otherwise.  Let us
recall the definition of {\em epigraphical
sum} of two functions  $g_1,g_2\in\Gamma_0(\RR^n)$, which is given by
  $ \left(g_1\square g_2\right)(y)=\inf\{ g_1(u)+g_2(v)|u+v=y\}$, $\forall
y\in\RR^n.$
We have $g_1\square g_2\in\Gamma_0(\RR^n)$ and
if $g_1$ and $g_2$ satisfy
  $\ri\dom g_1\cap \ri\dom g_2\neq \emptyset$  then
$(g_1+g_2)^*=g_1^*\square
  g_2^*$ (see \cite{Roc70}).
\begin{proposition}\label{P:F*} We have that $\phi_h^{-1}:\F^*\to\F$ is given by $
\phi_h^{-1}(y)=\nabla[h^*\square(\delta_{\A_0^\perp}+\<\cdot,\widetilde
x\ra)](y),$ for any $\widetilde x\in\A$, and moreover \(
\F^*=\Pi_{\A_0}\inte\dom h^*\).

\end{proposition}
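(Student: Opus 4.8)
The plan is to establish the formula for $\phi_h^{-1}$ by recognizing the inversion problem (\ref{E:inv-min}) as a Fenchel subdifferential relation, and then to compute the relevant conjugate via the epigraphical sum identity. First I would fix $\widetilde x\in\A$ and write $\A=\widetilde x+\A_0$, so that the indicator decomposes as $\delta_\A(z)=\delta_{\A_0}(z-\widetilde x)$. The key observation is that, by Proposition \ref{P:leg-trans1} applied to $h_{|_\A}$, the mapping $\phi_h=\Pi_{\A_0}\nabla h=\nabla(h_{|_\A})$ is a bijection from $\F$ onto $\F^*$, so inverting it means solving $y=\Pi_{\A_0}\nabla h(x)$ for $x\in\F$. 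Since $y\in\A_0$, this is exactly the first-order optimality condition for the strictly convex problem $x=\argmin\{(h+\delta_\A)(z)-\<y,z\ra\}$ displayed in (\ref{E:inv-min}), because the constraint $z\in\A$ forces the Euclidean gradient of $h$ to match $y$ only up to a component in $\A_0^\perp$.

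Next I would translate this optimality statement into a conjugacy relation. Setting $g=h+\delta_\A\in\Gamma_0(\RR^n)$, the minimizer condition in (\ref{E:inv-min}) reads $y\in\partial g(x)$, which by essential smoothness and \cite[Theorem 26.5]{Roc70} is equivalent to $x=\nabla g^*(y)$. Thus $\phi_h^{-1}(y)=\nabla g^*(y)$, and the task reduces to identifying $g^*$. To compute it, I would verify the qualification condition $\ri\dom h\cap\ri\dom\delta_\A=\F\neq\emptyset$ (which holds by hypothesis), so that the conjugate of a sum becomes the epigraphical sum: $(h+\delta_\A)^*=h^*\square\,\delta_\A^*$. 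A routine computation gives $\delta_\A^*(y)=\sup\{\<y,z\ra\mid z\in\A\}=\<y,\widetilde x\ra+\delta_{\A_0^\perp}(y)$, since the supremum of a linear functional over the affine set $\widetilde x+\A_0$ is finite (and equal to $\<y,\widetilde x\ra$) precisely when $y\perp\A_0$. Substituting yields $g^*=h^*\square(\delta_{\A_0^\perp}+\<\cdot,\widetilde x\ra)$, and differentiating gives the announced formula.

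Finally, for the identity $\F^*=\Pi_{\A_0}\inte\dom h^*$, I would argue as in the proof of Proposition \ref{P:leg-trans1}. Writing $\F^*=\nabla(h_{|_\A})(\F)=\Pi_{\A_0}\nabla h(\F)$ and using that $\nabla h:\inte\dom h\to\inte\dom h^*$ is a bijection (again by \cite[Theorem 26.5]{Roc70}), one identifies $\F^*$ as the image of $\inte\dom h^*$ under $\Pi_{\A_0}$, with the relative-interior bookkeeping handled by \cite[Theorem 6.6]{Roc70} exactly as in Proposition \ref{P:leg-trans1}.

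The main obstacle I expect is the careful handling of the qualification condition and the domain arithmetic: one must check that the infimal convolution is exact (attained) so that $\nabla g^*$ is genuinely single-valued and smooth on $\F^*$, and that the formula is independent of the choice of $\widetilde x\in\A$—the latter follows because different choices of $\widetilde x$ differ by an element of $\A_0$, which changes the linear term $\<\cdot,\widetilde x\ra$ only on $\A_0^\perp$ where $\delta_{\A_0^\perp}$ already vanishes, leaving the convolution and its gradient unchanged on $\F^*\subset\A_0$.
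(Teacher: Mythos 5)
Your derivation of the inversion formula is correct and follows the paper's own route: recognize inversion of $\phi_h$ as the minimization problem (\ref{E:inv-min}), write the optimality condition $y\in\partial(h+\delta_{\A})(x)$, pass to the conjugate to get $x\in\partial(h+\delta_\A)^*(y)$, and evaluate $(h+\delta_\A)^*=h^*\square\,\delta_\A^*=h^*\square(\delta_{\A_0^\perp}+\<\cdot,\widetilde x\ra)$ under the qualification $\ri\dom h\cap\ri\dom\delta_\A=C\cap\A=\F\neq\emptyset$. One repair is needed in your justification that $\partial(h+\delta_\A)^*(y)$ is a singleton: \cite[Theorem 26.5]{Roc70} does not apply here, because $h+\delta_\A$ is not of Legendre type (its domain has empty interior in $\R^n$, so it is not essentially smooth), and its conjugate is not Legendre either (it is affine along the directions of $\A_0^\perp$, hence not strictly convex). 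The correct tool, and the one the paper uses, is \cite[Corollary 26.3.2]{Roc70}: $h+\delta_\A$ is essentially strictly convex (it is strictly convex on $\dom\partial(h+\delta_\A)=\F\subset C$), so its conjugate is essentially smooth, whence the subdifferential of the conjugate is single-valued on its domain. You flag this issue yourself at the end (``one must check that $\nabla g^*$ is genuinely single-valued'') but never discharge it.

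The genuine gap is in your last paragraph, concerning $\F^*=\Pi_{\A_0}\inte\dom h^*$. From $\F^*=\Pi_{\A_0}\nabla h(\F)$ and the bijection $\nabla h:C\to\inte\dom h^*$ you can only conclude $\F^*=\Pi_{\A_0}\nabla h(C\cap\A)\subseteq\Pi_{\A_0}\nabla h(C)=\Pi_{\A_0}\inte\dom h^*$; the image of the slice $\F=C\cap\A$ under $\nabla h$ is in general a proper subset of $\inte\dom h^*$, so surjectivity of $\nabla h$ on $C$ says nothing about the reverse inclusion, and no ``relative-interior bookkeeping'' in the style of Proposition \ref{P:leg-trans1} produces it (the proof of that proposition, applied to $h_{|_\A}$, leads right back to the conjugate of $h+\delta_\A$). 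What must be shown is that for every $w\in\inte\dom h^*$ there exists $x\in\F$ --- not merely $x\in C$ --- with $\Pi_{\A_0}\nabla h(x)=\Pi_{\A_0}w$, i.e.\ that problem (\ref{E:inv-min}) with $y=\Pi_{\A_0}w$ has a solution. The paper extracts this from the machinery you already built: $g:=(h+\delta_\A)^*$ is essentially smooth, so $\dom\partial g=\inte\dom g$; the epigraphical-sum structure gives $\dom g=\dom h^*+\A_0^\perp$, hence $\inte\dom g=\inte\dom h^*+\A_0^\perp$ by \cite[Corollary 6.6.2]{Roc70}; and since $\F^*$ consists exactly of those $y\in\A_0$ with $\partial g(y)\neq\emptyset$, this yields $\F^*=\A_0\cap(\inte\dom h^*+\A_0^\perp)=\Pi_{\A_0}\inte\dom h^*$. (Alternatively, one can check solvability of (\ref{E:inv-min}) directly: $w\in\inte\dom h^*$ makes $h-\<w,\cdot\ra$ level-bounded, and on $\A$ this function differs from $h-\<\Pi_{\A_0}w,\cdot\ra$ by a constant, so the minimum over $\A$ is attained, and the sum rule forces the minimizer into $C\cap\A=\F$.) Either way, the second claim requires the domain computation your sketch omits.
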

\begin{proof} The optimality condition for (\ref{E:inv-min})
yields $y\in\partial(h+\delta_{\A})(x).$ Thus, $x\in\partial
(h+\delta_\A)^*(y)$. From $\F\neq\emptyset$, we conclude that the
function $g\in\Gamma_0(\RR^n)$ defined by $g=(h+\delta_\A)^*$
satisfies $g =h^*\square\delta_\A^*=h^*\square
(\delta_{\A_0^\perp}+\<\cdot,\widetilde x\ra)$ with $\widetilde
x\in\A$. Moreover, by \cite[Corollary 26.3.2]{Roc70},  $g$ is
essentially smooth and we deduce that indeed $x=\nabla g(y)$.
Since $g$ is essentially smooth,  $\dom\partial g=\inte\dom g$.
By definition of epigraphical sum, $
g(y)=\inf\{h^*(u)+\delta_{\A_0^\perp}(v)+\<v,\widetilde x\ra
|u+v=y\},$ and consequently we have that $y\in \dom g$ iff $y\in
\dom h^*+ \A_0^\perp$. Hence, $\inte\dom g=\inte\dom h^* +
\A_0^\perp$ (see for instance \cite[Corollary 6.6.2]{Roc70}).
Recalling that $\F^*$ is a relatively open subset of $\A_0$, we
deduce that $\F^*=\Pi_{\A_0}\dom\partial g=\Pi_{\A_0}\inte\dom
h^*$. \end{proof}

%  Motivated by \cite{BaL89,Son86}, we finish this section
%with the following definition.
%\begin{definition}\label{D:hcenter} A point $x_h\in\F$ is called the {\em
%$h$-center} of
%$\overline{\F}$ when $\phi_h(x_h)=0$.
%\end{definition}
%Remark that, when the $h$-center exists, $x_h$ is the minimizer of
%$h$ on $\F$.
\subsection{Linear problems in Legendre transform coordinates} 
\subsubsection{Polyhedral sets in Legendre transform coordinates} One of the first interest of Legendre transform coordinates is to transform linear constraints into
 positive cones.
\begin{proposition}\label{P:domh*} Assume that $ C=\{x\in\RR^n | Bx>d\}$, where $B$ 
is  a
$p\times n$ full rank matrix, with $p\geq n$. Suppose also that $h$ is of the form {\rm
(\ref{E:defhLP})} with $\theta$ satisfying $(H_1)$, and let
$\eta=\lim_{s\to+\infty}\theta'(s)\in(-\infty,+\infty]$. 
If
$\eta<+\infty$ then \( \overline{\dom h^*}=\{y\in\R^n\mid
y+B^T\lambda=0,\;\lambda_i\geq -\eta\}, \) and $\dom h^*=\RR^n$
when $\eta=+\infty$.
\end{proposition}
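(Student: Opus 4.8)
The plan is to compute the Fenchel conjugate $h^*$ explicitly by exploiting the separable-plus-linear structure coming from $h(x)=\sum_{i=1}^p\theta(g_i(x))$ with $g_i(x)=\langle B_i,x\rangle-d_i$. The key observation is that $h$ factors as $h=\Theta\circ G$, where $G:\R^n\to\R^p$ is the affine map $Gx=(g_1(x),\dots,g_p(x))=Bx-d$ and $\Theta(s)=\sum_{i=1}^p\theta(s_i)$ is a separable convex function on $\R^p$ whose conjugate is $\Theta^*(\sigma)=\sum_{i=1}^p\theta^*(\sigma_i)$. First I would recall that $\inte\dom\theta^*=\Im\partial\theta=(-\infty,\eta)$, a fact already used in section \ref{S:dual}, so that $\dom\theta^*\subseteq(-\infty,\eta]$ with equality of interiors; when $\eta=+\infty$ we get $\dom\theta^*=\R$, and when $\eta<+\infty$ we get $\overline{\dom\theta^*}=(-\infty,\eta]$.

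Next I would pass the conjugate through the affine composition. Since $B$ has full rank $n\le p$, the map $x\mapsto Bx$ is injective, and for the linear-composition rule for conjugates one computes
\begin{equation}\label{E:conj-comp}
h^*(y)=\sup_{x\in\R^n}\{\langle y,x\rangle-\Theta(Bx-d)\}.
\end{equation}
Substituting $s=Bx-d$ does not directly invert because $B$ is not surjective, so the clean way is to write the supremum over $x$ and recognize it as finite precisely when $y$ lies in the range of $B^T$. Concretely, if $y=-B^T\lambda$ is \emph{not} expressible through $B^T$ then I expect to show the supremum is $+\infty$ by sending $x$ along a direction in $(\Im B^T)^\perp=\Ker B$; but $\Ker B=\{0\}$ here since $B$ is full rank with $p\ge n$, so this degeneracy does not occur and instead the finiteness is governed entirely by the growth of $\Theta^*$. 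The correct route is therefore to use the identity $(\Theta\circ G)^*(y)=\inf\{\Theta^*(\sigma)+\langle\sigma,d\rangle\mid B^T\sigma=y\}$ (the conjugate of a convex function precomposed with a surjective-onto-its-image affine map, valid under the Slater condition \eqref{E:slater} which guarantees $\ri\dom$ compatibility). This is the analogue of the epigraphical-sum formula already invoked in Proposition \ref{P:F*}.

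From this infimal representation the domain description follows by inspection: $y\in\dom h^*$ iff there exists $\sigma=-\lambda$ (matching the sign convention of the statement, writing $y+B^T\lambda=0$) with $B^T\lambda=-y$ and $\Theta^*(-\lambda)<+\infty$, i.e. with each $-\lambda_i\in\dom\theta^*\subseteq(-\infty,\eta]$, equivalently $\lambda_i\ge-\eta$. Taking closures, when $\eta<+\infty$ this gives exactly
\begin{equation}\label{E:domstar}
\overline{\dom h^*}=\{y\in\R^n\mid y+B^T\lambda=0,\ \lambda_i\ge-\eta\},
\end{equation}
while when $\eta=+\infty$ every $\sigma_i$ is admissible, so the constraint is vacuous and, because $B^T$ has full column rank $n$ hitting all of $\R^n$ (as $\Im B^T=\R^n$ when $B$ is $p\times n$ of rank $n$), one obtains $\dom h^*=\R^n$.

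The main obstacle I anticipate is the justification of the conjugate-composition formula \eqref{E:conj-comp}--\eqref{E:domstar}: the infimal-convolution/image formula for $(\Theta\circ G)^*$ requires a qualification condition to hold with equality rather than mere inequality. I would secure this by checking that the Slater point $x^0$ of \eqref{E:slater} places $Gx^0=Bx^0-d$ in the interior of $\dom\Theta=(\dom\theta)^p\supseteq(0,\infty)^p$, so that $\ri(\Im G-d)\cap\ri\dom\Theta\neq\emptyset$, which is precisely the hypothesis making the conjugate rule exact (cf. \cite[Theorem 16.3]{Roc70}). A secondary care point is correctly tracking the constant $-\langle\sigma,d\rangle$ and the sign conventions so that the final inequality reads $\lambda_i\ge-\eta$; but once the qualification is in place the domain identity is a direct reading of where $\Theta^*\circ(-\,\cdot)$ is finite, and passing to closures is routine since $\dom h^*$ differs from its closure only on the boundary values $\lambda_i=-\eta$.
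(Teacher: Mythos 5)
Your proof is correct, but it runs along a genuinely different line from the paper's. The paper never computes $h^*$: it works directly with the closure of $\dom h^*$ via the recession (horizon) function, using the unconditional identity $\overline{\dom h^*}=\{y\mid \< y,w\ra\leq h^\infty(w)\ \forall w\}$ from \cite[Theorem 11.5]{RoW98}, then computes $h^\infty(w)=\sum_i\theta^\infty(\<B_i,w\ra)$ with $\theta^\infty(-1)=+\infty$, $\theta^\infty(1)=\eta$, and finally invokes the Farkas lemma to convert the resulting dual-inequality description $\{y\mid \<y,w\ra\leq\eta\<B^Te,w\ra\ \forall w,\ Bw\geq0\}$ into the multiplier form $y+B^T\lambda=0$, $\lambda_i\geq-\eta$. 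You instead compute $h^*$ itself through the conjugate-of-affine-composition rule, $h^*(y)=\inf\{\Theta^*(\sigma)+\<\sigma,d\ra\mid B^T\sigma=y\}$, validated by the Slater qualification, and read off $\dom h^*=B^T(\dom\Theta^*)$ together with $\inte\dom\theta^*=(-\infty,\eta)$. The trade-off: the paper's route needs no constraint qualification (the recession identity holds for any closed proper convex function) and lands directly on the closure, at the price of invoking Farkas; your route needs the qualification but yields strictly more, namely an explicit infimal-projection formula for $h^*$, and in effect replaces Farkas by the exactness of the image formula --- two closely related dual facts. One point where your write-up is looser than it should be is the final closure step: ``$\dom h^*$ differs from its closure only on the boundary values $\lambda_i=-\eta$'' is not by itself a proof, since the image of a closed convex set under $B^T$ need not be closed in general. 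The clean fix is the sandwich $B^T\bigl((-\infty,\eta)^p\bigr)\subseteq\dom h^*\subseteq B^T\bigl((-\infty,\eta]^p\bigr)$, noting that the right-hand set is the linear image of a polyhedron, hence polyhedral and closed \cite[Theorem 19.3]{Roc70}, while each of its points is the limit of $B^T(\sigma-\tfrac1k e)$ with $\sigma-\tfrac1k e$ in the open box; this pins down $\overline{\dom h^*}=B^T\bigl((-\infty,\eta]^p\bigr)$, which is exactly the asserted set. With that detail supplied, your argument is complete, including the case $\eta=+\infty$, where $\dom\theta^*=\R$ and surjectivity of $B^T$ gives $\dom h^*=\R^n$.
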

\begin{proof} By \cite[Theorem 11.5]{RoW98}, $\overline{\dom
h^*}=\{y\in\R^n\mid \<y,d\ra\leq h^\infty(d) \hbox{ for all
}d\in\R^n\}$, where $h^\infty$ is the {\em recession} function,
also known as {\em horizon} function,  of $h$. The recession
function is defined by
$h^\infty(d)=\lim_{t\to+\infty}\frac{1}{t}[h(\bar x+td)-h(\bar
x)], \; d\in\RR^n$, where $\bar x\in\dom h$; this limit does not
depend of $\bar x$ and eventually $h^\infty(d)=+\infty$ (see also
\cite{Roc70}). In this case, it is easy to verify that $
h^\infty(d)=\sum_{i=1}^p\theta^\infty(\<B_i,d\ra).$ Clearly,
$\theta^\infty(-1)=+\infty$ and
$\theta^\infty(1)=\lim_{s\to+\infty}\theta'(s)=\eta$. In
particular, if $\eta=+\infty$ then $\dom h^*=\RR^n$. If
$\eta<+\infty$ then $y\in\overline{\dom h^*}$ iff for all
$d\in\RR^n$ such that $Bd\geq 0$, $\<y,d\ra\leq
h^\infty(d)=\sum_{i=1}^p\eta\<B_i,d\ra$, that is $\<y-\eta
B^Te,d\ra\leq 0 $ with $e=(1,\cdots,1)$. Thus, by the Farkas
lemma, $y\in\overline{\dom h^*}$ iff $\exists \mu\geq 0$, $y-\eta
B^Te+B^T\mu=0$. \end{proof}

As a direct consequence of  Propositions
\ref{P:F*} and \ref{P:domh*}:
\begin{corollary} Under the assumptions of Proposition {\rm
\ref{P:domh*}}, if $\eta=0$ then $\F^*$ is a positive convex cone
and
if $\eta=+\infty$ then $\F^*=\A_0$.
\end{corollary}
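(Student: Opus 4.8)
The plan is to read the corollary off directly from Propositions \ref{P:F*} and \ref{P:domh*}, which together already contain all the analytic content. The crucial hinge is the identity $\F^*=\Pi_{\A_0}\inte\dom h^*$ supplied by Proposition \ref{P:F*}: in each of the two regimes it therefore suffices to understand $\inte\dom h^*$ and then push it through the linear orthogonal projection $\Pi_{\A_0}$ onto $\A_0$. So I would organize the argument around these two inputs and treat the two values of $\eta$ separately.

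For $\eta=+\infty$, Proposition \ref{P:domh*} gives $\dom h^*=\R^n$, hence $\inte\dom h^*=\R^n$, and since $\Pi_{\A_0}$ maps $\R^n$ onto $\A_0$ we get $\F^*=\Pi_{\A_0}\R^n=\A_0$; this case is immediate. For $\eta=0$, Proposition \ref{P:domh*} yields $\overline{\dom h^*}=\{y\in\R^n\mid y+B^T\lambda=0,\ \lambda\geq 0\}=-B^T\R^p_+$, which is patently a convex cone, being the image of the convex cone $\R^p_+$ under the linear map $\lambda\mapsto -B^T\lambda$ and hence stable under multiplication by positive scalars. Since $h^*$ is of Legendre type its domain is convex with nonempty interior, so I may replace $\inte\dom h^*$ by $\inte\overline{\dom h^*}$. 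The interior of a convex cone is again a convex cone, and a linear map carries convex cones to convex cones; applying $\Pi_{\A_0}$ then shows that $\F^*$ is a convex cone, which is moreover relatively open in $\A_0$ by Theorem \ref{T:dphi}. This is precisely the asserted ``positive convex cone''.

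The only points demanding any care — and thus the (minor) obstacles — are the two standard convex-analytic facts invoked above: that $\inte\dom h^*=\inte\overline{\dom h^*}$, which rests on $\dom h^*$ being convex with nonempty interior, and that both passing to the interior and applying a linear projection preserve positive homogeneity. Both are routine (cf.\ \cite[Theorems 6.3 and 6.6]{Roc70}), so once Propositions \ref{P:F*} and \ref{P:domh*} are available no genuine difficulty remains; the real substance of the statement lives entirely in the recession-function computation of $\overline{\dom h^*}$ already carried out in Proposition \ref{P:domh*}.
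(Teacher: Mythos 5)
Your proof is correct and follows essentially the same route as the paper, which states the corollary as a direct consequence of Propositions \ref{P:F*} and \ref{P:domh*} without further argument: you simply make explicit the routine convex-analytic steps (that $\inte\dom h^*=\inte\overline{\dom h^*}$ for a convex set with nonempty interior, and that taking interiors and linear images preserves the convex-cone property). Nothing is missing.
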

\subsubsection{$\HS$-trajectories  in Legendre transform
coordinates} In the sequel, we assume that  $
f(x)=\langle c,x\rangle $ for some $c\in\RR^n$. As another striking application of Legendre transform coordinates,
 we prove now that the trajectories of $\HS$ may be seen as straight lines in
 $\F^*$. 
Recall that the  {\em push forward} vector field of $\gr f_{|_\F}$ by
$\phi_h$ is defined for every \(y\in\F^* \) by \([(\phi_h) _*\gr
f_{|_{\F}} ]\:(y)=d\phi_h(\phi_h^{-1}(y))\gr
f_{|_\F}(\phi_h^{-1}(y))\). 
\begin{proposition}\label{P:pushforward} For all $y\in\F^*$, $
[(\phi_h) _*\gr f_{|_\F} ]\:(y)=\Pi_{\A_0}c .$
\end{proposition}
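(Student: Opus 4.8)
The plan is to evaluate the push forward pointwise using the explicit formulas already established, so that essentially no new computation is required. Since $f(x)=\langle c,x\rangle$, we have $\nabla f\equiv c$. Fix $y\in\F^*$ and set $x=\phi_h^{-1}(y)\in\F$, so that by the definition of push forward recalled just above the statement, $[(\phi_h)_*\gr f_{|_\F}](y)=d\phi_h(x)\,\gr f_{|_\F}(x)$. Thus the whole proposition reduces to computing $d\phi_h(x)\,\gr f_{|_\F}(x)$ for an arbitrary $x\in\F$.

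Next I would substitute the two key formulas. By Theorem~\ref{T:dphi} we have $d\phi_h(x)=\Pi_{\A_0}H(x)$, and by the explicit expression (\ref{E:gradH-expl}),
\begin{equation*}
\gr f_{|_\F}(x)=H(x)^{-1}\bigl[I-A^T(AH(x)^{-1}A^T)^{-1}AH(x)^{-1}\bigr]c.
\end{equation*}
Multiplying on the left by $H(x)$ cancels the leading factor $H(x)^{-1}$, giving
\begin{equation*}
H(x)\,\gr f_{|_\F}(x)=c-A^T(AH(x)^{-1}A^T)^{-1}AH(x)^{-1}c.
\end{equation*}
Here the matrix $AH(x)^{-1}A^T$ is invertible because $A$ has full rank and $H(x)^{-1}\in\Sn$, exactly as already used to derive (\ref{E:projH}).

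Finally I would apply $\Pi_{\A_0}$, the Euclidean orthogonal projection onto $\A_0$. The second term in the display above has the form $A^T(\,\cdot\,)$, hence lies in $\Im A^T=\A_0^\perp$; since $\Pi_{\A_0}$ annihilates $\A_0^\perp$, this term drops out and we obtain
\begin{equation*}
d\phi_h(x)\,\gr f_{|_\F}(x)=\Pi_{\A_0}H(x)\,\gr f_{|_\F}(x)=\Pi_{\A_0}c,
\end{equation*}
which is the claimed identity. There is no genuine obstacle in this argument; the only point to keep in mind is the identification $\A_0^\perp=\Im A^T$ recorded in the Notations, which is precisely what renders the correction term invisible to $\Pi_{\A_0}$.
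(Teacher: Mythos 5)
Your proof is correct and follows essentially the same route as the paper's: substitute $d\phi_h(x)=\Pi_{\A_0}H(x)$ from Theorem~\ref{T:dphi} and the explicit gradient formula (\ref{E:gradH-expl}), cancel $H(x)H(x)^{-1}$, and observe that the correction term lies in $\Im A^T=\A_0^\perp$ and is therefore annihilated by $\Pi_{\A_0}$. No discrepancies to report.
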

\begin{proof} Let $y\in\F^*$. Setting $x=\phi_h^{-1}(y)$, by
Theorem \ref{T:dphi} we get $[(\phi_h) _*\gr f_{|_{\F}}]
\:(y)=d\phi_h(x)\gr f_{|_\F}(x)=
\Pi_{\A_0}H(x)H(x)^{-1}[I-A^T(AH(x)^{-1}A^T)^{-1}AH(x)^{-1}]c=
\Pi_{\A_0}c-\Pi_{\A_0}A^Tz,$ where $
z=[(AH(x)^{-1}A^T)^{-1}AH(x)^{-1}]c.$ Since $\Im A^T=\A_0^\perp$,
the conclusion follows. \end{proof}

   Next, we  give two optimality characterizations of the
orbits of $\HS$, extending thus to
the general case the results of \cite{BaL89} for the log-metric.

\subsubsection{Geodesic curves} First, we claim that  the orbits of $\HS$ 
can be regarded as  geodesics
curves with respect to some appropriate metric on $\F$. To this
end, we endow $\F^*=\phi_h(\F)$ with the Euclidean metric, which
allows us to define on $\F$ the  metric
\begin{equation}\label{E:H2}
(\cdot,\cdot)^{H^2}=\left( \phi_h \right)^* \<\cdot,\cdot\ra,
\end{equation}
that is, $\forall (x,u,v) \in \F\times\R^n\times\R^n$,
$(u,v)^{H^2} _x =\< d\phi_h (x)u,d\phi_h (x)v\ra=\<
\Pi_{\A_0}H(x)u,\Pi_{\A_0}H(x)v\ra.$  For each initial condition
$x^0\in \F$, and for every $c\in\R^n$ we set
\begin{equation}
\label{geovelo} v=d\phi_h
(x^0)^{-1}\Pi_{\A_0}c=\sqrt{H(x^0)^{-1}}\Pi_{\sqrt{H(x^0)}\A_0}\sqrt{H(x^0)^{-1}}\Pi_{\A_0}c.
\end{equation}
\begin{theorem}
Let $(x^0,c)\in \F\times\R^n$, set $f(x)=\<c,x\ra, \: \forall
x\in C$ and define $v$ as in {\rm (\ref{geovelo})}. If $\F$ is
endowed with the metric $(\cdot,\cdot)^{H^2}$  given by {\rm
(\ref{E:H2})}, then the solution $x(t)$ of $\HS$ is the unique
geodesic passing through $x^0$ with velocity $v$.
\end{theorem}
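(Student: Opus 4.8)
The plan is to read everything through the isometry $\phi_h$ and so reduce the geodesic problem to the trivial one of straight lines in a flat, convex open set. By the very definition (\ref{E:H2}), $(\cdot,\cdot)^{H^2}=(\phi_h)^*\langle\cdot,\cdot\rangle$, so for every $x\in\F$ the differential $d\phi_h(x)$ is a linear isometry from $(T_x\F,(\cdot,\cdot)^{H^2}_x)$ onto $(\A_0,\langle\cdot,\cdot\rangle)$. Thus $\phi_h$ is a Riemannian isometry of $(\F,(\cdot,\cdot)^{H^2})$ onto $\F^*$ equipped with the Euclidean metric, and since an isometry intertwines the two Levi-Civita connections it carries geodesics to geodesics. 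Consequently a curve $t\mapsto x(t)$ is a geodesic of $(\F,(\cdot,\cdot)^{H^2})$ if and only if $t\mapsto\phi_h(x(t))$ is a geodesic of $\F^*$.

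First I would describe the geodesics on the image side. By Theorem \ref{T:dphi}, $\F^*$ is an open convex subset of the linear space $\A_0$, and the metric it carries is the restriction of the flat Euclidean metric of $\R^n$. Hence its geodesics are exactly the affinely parametrized Euclidean straight segments $t\mapsto y^0+tw$, $y^0\in\F^*$, $w\in\A_0$; convexity of $\F^*$ ensures that such a segment stays in $\F^*$ throughout its maximal interval, so there is no obstruction to existence beyond the one already governing $\HS$.

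The heart of the argument is then Proposition \ref{P:pushforward}, which has already done the decisive computation. Writing $y(t)=\phi_h(x(t))$ for the solution $x(t)$ of $\HS$ and differentiating, the chain rule together with $\dot x=-\gr f_{|_\F}(x)$ gives
\[
\dot y(t)=d\phi_h(x(t))\,\dot x(t)=-d\phi_h(x(t))\,\gr f_{|_\F}(x(t))=-[(\phi_h)_*\gr f_{|_\F}](y(t))=-\Pi_{\A_0}c .
\]
Thus $\dot y$ is a fixed vector of $\A_0$, so $y(t)=\phi_h(x^0)-t\,\Pi_{\A_0}c$ is a Euclidean straight line run at constant speed, that is, an affinely parametrized geodesic of $\F^*$. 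Pulling back through the isometry, $x(t)=\phi_h^{-1}(y(t))$ is a geodesic of $(\F,(\cdot,\cdot)^{H^2})$.

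It remains to fix the initial data and to invoke uniqueness. Plainly $x(0)=x^0$, while $\dot x(0)=-\gr f_{|_\F}(x^0)=-d\phi_h(x^0)^{-1}\Pi_{\A_0}c$, which is precisely the vector $v$ of (\ref{geovelo}) up to the sign imposed by the descent orientation of the flow; uniqueness of the geodesic issued from $x^0$ with this prescribed velocity is the standard uniqueness theorem for the geodesic equation, transported back from $\F^*$ through $\phi_h$. The one delicate point I would be careful about is the clause ``isometry $\Rightarrow$ geodesics correspond'': rather than writing out the Christoffel symbols of $(\cdot,\cdot)^{H^2}$, I would run the whole argument on the flat side $\F^*$, where geodesics are manifestly straight lines, and use only that $\phi_h$ is a ${\cal C}^1$ diffeomorphic isometry (Theorem \ref{T:dphi}) to transfer both existence and uniqueness back to $\F$.
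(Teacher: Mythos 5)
Your proof is correct and takes essentially the same route as the paper: both read the problem through the isometry $\phi_h$ onto the flat convex set $\F^*\subset\A_0$, use Proposition \ref{P:pushforward} to see that in Legendre transform coordinates the relevant curve is an affinely parametrized straight line, and then transfer geodesic existence and uniqueness back through $\phi_h$ (the paper merely runs the identification in the opposite direction, starting from the geodesic with velocity $v$ and matching it to the $\HS$ orbit). Your parenthetical sign remark is in fact sharper than the paper's own proof, which silently equates the line $\phi_h(x^0)+t\Pi_{\A_0}c$ with the image of the descent trajectory even though $\dot x(0)=-\gr f_{|_\F}(x^0)=-v$, so the solution of $\HS$ is literally the geodesic with initial velocity $-v$ (the same geodesic curve traversed backwards); this discrepancy lies in the statement, not in your argument.
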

\begin{proof} Since $\F,\:(\cdot,\cdot)^{H^2}$ is isometric to the
Euclidean Riemannian manifold $\F^*$, the geodesic joining two
points of $\F$ exists and is unique. Let us denote by $\gamma:J
\subset \R \mapsto \F$ the geodesic passing through $x^0$ with
velocity $v$. By definition of $(\cdot,\cdot)^{H^2}$,
$\phi_h(\gamma)$ is a geodesic in $\F^*$. Whence
$\phi_h(\gamma(t))=\phi_h(x^0)+td\phi_h(x^0)v,$ where $t\in J$.
In view of (\ref{geovelo}), this can be rewritten
$\phi_h(\gamma(t))=\phi_h(x^0)+t\Pi_{\A_0}c$. By Proposition
\ref{P:pushforward} we know that $\left( \phi_h \right)_* \gr
f_{|\F}=\Pi_{\A_0}c$, and therefore
$\phi_h^{-1}(\phi_h(\gamma))=\gamma$ is exactly the solution of
$\HS$.\end{proof}

\begin{remark}
{\rm A Riemannian manifold is called {\em geodesically complete}
if the maximal interval of definition of every geodesic is
$\RR$.  When $\Pi_{\A_0}c\neq 0$ and $\F^*$ is not an affine
subspace of $\R^n$, the Riemannian manifold
$\F,\:(\cdot,\cdot)^{H^2}$ is not complete in this sense.}

\end{remark}

\subsubsection{Lagrange equations} Following the ideas of \cite{BaL89}, we 
describe  the orbits
of $\HS$ as orthogonal projections on \( \A \) of \( \dot{q}-
\)trajectories of a specific {\em Lagrangian system}. Recall that
given a real-valued mapping \( {\cal L}(q,\dot q) \) called the
Lagrangian, where  \( q=(q_{1},\dots ,q_{n}) \) and \( \dot
q=(\dot q_1,\dots ,\dot q_{n}) \), the associated Lagrange
equations of motion are the following
\begin{equation}\label{LAG}
\frac{d}{dt}\frac{\partial {\cal L}}{\partial
\dot q_{i}}=\frac{\partial {\cal L}}{\partial q_{i}}, \quad
\frac{d}{dt}q_{i}=\dot q_{i},\quad \forall i=1\dots n.
\end{equation}
Their solutions are \( C^{1}- \)piecewise paths \( \gamma
:t\longmapsto (q(t),\dot q(t)) \), defined for \( t\in J\subset
\R \), that satisfy (\ref{LAG}), and appear as extremals of the
functional \( \widehat{{\cal L}}(\gamma )=\int _{J}{\cal L}(q(t),\dot
q(t))dt\). Notice that in general, the solutions are not unique,
in the sense that they do not only depend on the initial
condition \( \gamma (0) \). Let us introduce the Lagrangian
${\cal L}:\RR^n\times C\to \RR$ defined  by
\begin{equation}\label{E:LAG} {\cal L} (q,\dot q)=\<\Pi 
_{\A_{0}}c,q\ra-h(\Pi
_{\A}\dot q),
\end{equation}
where \( \Pi _{\A}\) is the orthogonal projection onto \( \A \),
i.e.  \(\Pi _{\A}x=\widetilde x+\Pi _{\A_{0}}(x-\widetilde x) \)
for any \( \widetilde x\in \A \).

\begin{theorem}\label{T:LAG}
For any solution \( \gamma (t)=(q(t),\dot q(t)) \) of the
Lagrangian dynamical system {\rm (\ref{LAG})} with Lagrangian
given by {\rm (\ref{E:LAG})}, the projection \( x(t)=\Pi
_{\A}\dot q(t) \) is the solution of (H-SD)  with initial
condition \( x^0=\Pi _{\A}\dot{q}(0). \)
\end{theorem}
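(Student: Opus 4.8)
The plan is to write down the Euler--Lagrange equations for the Lagrangian (\ref{E:LAG}) and to show that, after the substitution $x=\Pi_\A \dot q$, they coincide with the equation satisfied by the $\HS$ trajectory once it is expressed in the Legendre transform coordinates $\phi_h$. The reduction of $\HS$ to a constant-velocity flow in $\F^*$ is already furnished by Proposition \ref{P:pushforward}, so the whole matter comes down to recognizing the first equation in (\ref{LAG}) as $\frac{d}{dt}\phi_h(x(t))=-\Pi_{\A_0}c$.

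First I would compute the two partial derivatives appearing in (\ref{LAG}). Since ${\cal L}$ depends on $q$ only through the linear term $\<\Pi_{\A_0}c,q\ra$ and $\Pi_{\A_0}$ is self-adjoint, one gets $\frac{\partial {\cal L}}{\partial q}=\Pi_{\A_0}c$. For the conjugate momentum, note that $\dot q\mapsto \Pi_\A \dot q=\widetilde x+\Pi_{\A_0}(\dot q-\widetilde x)$ is affine with linear part $\Pi_{\A_0}$; hence the chain rule, together with the symmetry of $\Pi_{\A_0}$, gives $\frac{\partial {\cal L}}{\partial \dot q}=-\Pi_{\A_0}\nabla h(\Pi_\A \dot q)$. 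Substituting into the first equation of (\ref{LAG}) yields
\begin{equation}
\frac{d}{dt}\,\Pi_{\A_0}\nabla h(\Pi_\A \dot q(t))=-\Pi_{\A_0}c .
\end{equation}

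Next, set $x(t)=\Pi_\A \dot q(t)$. Because $h(\Pi_\A \dot q)$ is finite and differentiable only on $\inte\dom h=C$, any solution automatically satisfies $x(t)\in C\cap\A=\F$, so the mapping $\phi_h$ of (\ref{E:LTC}) is defined along the trajectory and $\Pi_{\A_0}\nabla h(x(t))=\phi_h(x(t))$. The displayed identity thus reads $\frac{d}{dt}\phi_h(x(t))=-\Pi_{\A_0}c$, i.e. $t\mapsto\phi_h(x(t))$ is the straight line in $\F^*$ issuing from $\phi_h(x^0)$ with constant velocity $-\Pi_{\A_0}c$, where $x^0=\Pi_\A\dot q(0)$. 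On the other hand, Proposition \ref{P:pushforward} states that the push forward of $\gr f_{|_\F}$ by $\phi_h$ is the constant field $\Pi_{\A_0}c$; hence the $\HS$ solution $\bar x(t)$ starting from $x^0$ satisfies $\frac{d}{dt}\phi_h(\bar x(t))=-[(\phi_h)_*\gr f_{|_\F}](\phi_h(\bar x(t)))=-\Pi_{\A_0}c$, so $t\mapsto\phi_h(\bar x(t))$ is the same straight line. Since $\phi_h$ is a ${\cal C}^1$ diffeomorphism from $\F$ onto $\F^*$ by Theorem \ref{T:dphi}, the curves $\phi_h(x(t))$ and $\phi_h(\bar x(t))$ solve the same linear initial value problem in $\F^*$ and therefore coincide; applying $\phi_h^{-1}$ gives $x(t)=\bar x(t)$, the solution of $\HS$ with initial condition $x^0$.

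The only delicate point is the momentum computation: one must correctly differentiate $h\circ\Pi_\A$, keeping track of the fact that the linear part of the affine map $\Pi_\A$ is $\Pi_{\A_0}$, that $\Pi_{\A_0}$ is self-adjoint, and that the resulting expression $\Pi_{\A_0}\nabla h(\cdot)$ is \emph{exactly} $\phi_h$. Once this is in place the identification with Proposition \ref{P:pushforward} is immediate, and the (already noted) non-uniqueness of the full path $q(t)$ does not affect the conclusion, since only the projection $\Pi_\A \dot q(t)$ enters the statement and that projection is pinned down by the first Euler--Lagrange equation.
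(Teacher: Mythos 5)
Your proof is correct and follows essentially the same route as the paper's: compute the conjugate momentum, recognize it as $-\phi_h(\Pi_{\A}\dot q)$ (up to sign, exactly the Legendre transform coordinates), read the first Lagrange equation as saying that $\phi_h(\Pi_{\A}\dot q(t))$ traces a straight line with constant velocity, and identify this line with the $\HS$ orbit via Proposition \ref{P:pushforward} together with the diffeomorphism property of $\phi_h$ from Theorem \ref{T:dphi}. You are in fact more careful than the paper on one point: the paper writes $p(t)=\nabla(h\circ\Pi_{\A})(\dot q(t))$, dropping the minus sign coming from ${\cal L}(q,\dot q)=\<\Pi_{\A_0}c,q\ra-h(\Pi_{\A}\dot q)$, and consequently gets $\frac{d}{dt}\phi_h(\Pi_{\A}\dot q)=+\Pi_{\A_0}c$ (which would correspond to the ascent field $+\gr f_{|_\F}$), whereas your signs are consistent throughout and yield the descent flow, as the statement of the theorem requires.
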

\begin{proof} It is easy to verify  that  \( \nabla (h\circ \Pi
_{\A})(x)=\Pi _{\A_{0}}\nabla h(\Pi _{\A}x) \) for any \( x\in
\R^{n}. \) Given a solution \( \gamma (t)=(q(t),\dot{q}(t)) \) of
(\ref{E:LAG}) defined on \( J \), we set \( p(t)=(p_{1}(t),\dots
,p_{n}(t))=\left(\frac{\partial {\cal L}}{\partial \dot{q}_{1}}(\gamma
(t)), \dots ,\frac{\partial {\cal L}}{\partial \dot{q}_{n}}(\gamma
(t))\right)\). We have \( p(t)=\nabla (h\circ \Pi
_{\A})(\dot{q}(t))=\Pi _{\A_{0}}\nabla h(\Pi
_{\A}\dot{q}(t))=\phi _{h}(\Pi _{\A}\dot{q}(t)).\)  Equations of
motion  become \( \frac{d}{dt}p(t)=\Pi _{\A_{0}}c,\) that is, \(
\frac{d}{dt}\phi _{h}(\Pi _{\A}\dot{q}(t))=\Pi _{\A_{0}}c \).
Since \( \phi _{h}:\F\to\F^* \) is a diffeomorphism,  the latter
means, according to Proposition \ref{P:pushforward}, that \(
\Pi _{\A}\dot{q}(t) \) is a trajectory for the vector field \(
\nabla _{H}f_{|_\F}. \) Notice that \( C \) being convex, as soon
as \( \dot{q}(0)\in C, \) \( \Pi _{\A}\dot{q}(0)\in C\cap \A=\F,
\) and what precedes forces \( \Pi _{\A}\dot{q}(t) \) to stay in
\( \F \) for any \( t\in J. \) \end{proof}

\subsubsection{Completely integrable Hamiltonian systems} In the sequel,
all mappings are supposed to be at least of class \( {\cal C}^{2} \).
Let us first recall the notion of Hamiltonian system. Given an integer \(r\geq 1\) and a real-valued mapping \( {\cal H}(q,p) \) on \( \R^{2r}\) with coordinates \((q,p)=(q_{1},\dots ,q_{r},p_{1},\dots ,p_{r}) \),  the  {\it Hamiltonian vector field \( X_{{\cal H}} \) associated with \( {\cal H} \)}  is defined by 
\(
X_{{\cal H}}=\sum _{i=1}^r \frac{\partial {\cal H}}{\partial
p_{i}}\frac{\partial }{\partial q_{i}}-\frac{\partial {\cal H}}{\partial
q_{i}}\frac{\partial }{\partial p_{i}}.\) The trajectories of the dynamical system induced by \( X_{{\cal H}} \)  are the solutions to 
\begin{equation}
\left\{ \begin{array}{l}
\dot{p}_{i}(t)=-\frac{\partial} {\partial q_{i}}{\cal
H}(q(t),p(t)),\:i=1,\dots, r,\\
\dot{q}_{i}(t)=\frac{\partial} {\partial p_{i}}{\cal
H}(q(t),p(t)),\:i=1,\dots, r.
\end{array}\label{Ham}\right.
\end{equation}
Following a standard procedure,  Lagrangian functions  \( {\cal L}(q,\dot{q}) \) are  associated with  Hamiltonian  systems by means of  the so-called Legendre transform
\[\Phi :\left\{
\begin{array}{ccl}
\R^{2r}& \longrightarrow &\R^{2r}\\
(q,\dot{q}) &\longmapsto & (q,\frac{\partial {\cal L}}{\partial
\dot{q}}(q,\dot{q}))
\end{array}\right.\]
In fact, when \( \Phi  \) is a diffeomorphism, the Hamiltonian function \( {\cal H} \) associated
with the Lagrangian \( {\cal L} \) is defined on \( \Phi(\R^{2r}) \)  by \(
{\cal H}(p,q)  = \sum _{i=1}^r p_{i}\dot{q}_{i}-{\cal L}(q,\dot{q}) =  \<p,\psi ^{-1}(q,p)\ra-{\cal L}(q,\psi ^{-1}(q,p)),\)
where \( (q,\psi ^{-1}(q,p)):=\Phi ^{-1}(q,p) \). With these definitions, \( \Phi  \) sends the
trajectories of the corresponding Lagrangian system  on the trajectories of the Hamiltonian system (\ref{Ham}).

In general, the Lagrangian (\ref{E:LAG})  does not  lead to an 
invertible \( \Phi  \) on \(\R^{2n}\). However, we are only interested in the projections \( \Pi _{\A}\dot{q} \) of
the trajectories, which, according to Theorem \ref{T:LAG}, take their values in 
\( \F \). Moreover,  notice that for any differentiable path \( t\mapsto q^{\perp }(t) \) lying in \(
\A_{0}^{\perp } \),
\( t\mapsto (q(t),\dot{q}(t)) \) is a solution of (\ref{LAG}) iff
\( t\mapsto (q(t)+q^{\perp }(t),\dot{q}(t)+\dot{q}^{\perp }(t)) \) is.
This legitimates the idea of restricting  \( {\cal L} \) to \(
\A_{0}\times \Pi _{\A_{0}}\F \). Hence and from now on,
\( {\cal L} \) denotes the function:
\begin{equation}\label{E:Lag2}
{\cal L}: \left\{
\begin{array}{ccl}
\A_{0}\times \Pi _{\A_{0}}\F& \longrightarrow & \R\\
(q,\dot{q})& \longmapsto & {\cal L}(q,\dot{q}).
\end{array}\right.
\end{equation}
Taking \( (q_{1},\dots, q_{r}) \), with $r=n-m$,  a linear system of coordinates induced by an
Euclidean orthonormal basis for \( \A_{0} \), we easily see that
this ``new'' Lagrangian has
trajectories \( (q(t),\dot{q}(t)) \) lying in \( \A_{0}\times \Pi
_{\A_{0}}\F \), whose projections \( \Pi _{\A}\dot{q}(t) \) are exactly the \(\HS\) trajectories. Moreover, an easy computation yields
\[
\frac{\partial {\cal L}}{\partial \dot{q}}(q,\dot{q})=\Pi 
_{\A_{0}}\nabla h(\Pi _{\A_{0}}\dot{q})=[\phi _{h}\circ \Pi _{\A}](\dot{q}),\]
which is a diffeomorphism by Proposition \ref{T:dphi}. The Legendre transform is then given by 
\[\Phi : \left\{
\begin{array}{ccl}
\A_{0}\times \Pi _{\A_{0}}\F & \longrightarrow & \A_{0}\times \F^{*}\\
(q,\dot{q})& \longmapsto & (q,[\phi _{h}\circ \Pi _{\A}](\dot{q})),
\end{array}\right.\]
and therefore, \( {\cal L} \) is converted into the Hamiltonian system 
associated with
\begin{equation}
{\cal H}:\left\{
\begin{array}{ccl}
\A_{0}\times \F^{*}& \longrightarrow & \R\\
(q,p)&\longmapsto & \<p,[\phi _{h}\circ \Pi _{\A}]^{-1}(p)\ra-{\cal 
L}(q,[\phi
_{h}\circ \Pi _{\A}]^{-1}(p)).
\end{array}\right.\label{Ham2}
\end{equation}
%Eventhough $ {\cal L}$ and $ \Phi  $ are not defined on the whole space \(
%\R^{n}\times \R^{n}, \), we emphasize the fact that,
%according to Proposition \ref{T:LAG}
%the general construction of still holds when restricted to \( \A_{0}\times \Pi
%_{\A_{0}}(\F) \).
Let us now introduce the concept of completely integrable Hamiltonian
system.  The Poisson bracket of two real valued
functions \( f_{1},f_{2} \)
on \( \R^{2r}\) is given by
\(
\{f_{1},f_{2}\}=\sum _{i=1}^r\frac{\partial f_{1}}{\partial p_{i}}
\frac{\partial f_{2}}{\partial q_{i}}-\frac{\partial f_{1}}
{\partial q_{i}}\frac{\partial f_{2}}{\partial p_{i}}.\) Notice that, from the definitions, we have \(
\{f_{1},f_{2}\}=X_{f_{1}}(f_{2}) \)
and \( X_{\{f_{1},f_{2}\}}=[X_{f_{1}},X_{f_{2}}] \), where \([\cdot,\cdot]\) is the standard {\em bracket product} of vector fields \cite{Lan95}.  Now, the system
(\ref{Ham})
is called {\it completely integrable} if there exist $r$ functions \( f_{1},\dots 
,f_{r} \) with \( f_{1}={\cal H} \),  satisfying
\[
\left\{ \begin{array}{l}
\{f_{i},f_{j}\}=0, \quad \forall i,j=1,\dots, r.\\
df_{1}(x),\dots,df_{r}(x) \textrm{ are linearly independent at any }x\in \R^{2r}.
\end{array}\right. \]
As a motivation for completely integrable systems, we will just point out
the
following: the functions \( f_{i} \) are called {\it integrals of motions}
because
\( X_{{\cal H}}(f_{i})=\{h,f_{i}\}=0 \), which means that any trajectory of
\( X_{{\cal H}} \)
lies on the level sets of each \( f_{i} \) (the same holds for all \(
X_{f_{j}} \)).
Also,  the trajectory passing through \( (q_{0},p_{0}) \) lies in
  the set \( \bigcap _{i=1\dots r}f^{-1}_{i}(\{f_{i}(q_{o},p_{0})\}) \).
Besides,
\( [X_{f_{i}},X_{f_{j}}]=0 \) implies that we can find, at least locally,
coordinates
\( (x_{1},\dots ,x_{r}) \) on this set such that \( X_{{\cal H}
}=\frac{\partial }{\partial x_{1}},X_{f_{2}}=\frac{\partial }{\partial
x_{2}},
\dots ,X_{f_{r}}=\frac{\partial }
{\partial x_{r}}, \)
that is, in these coordinates, the trajectories of \( X_{f_i} \) are
straight
lines.

%Naturally, the notion of complete integrability still makes sense when
%restricted to open
%subsets of $\R^n\times(\R^n)^*$.

\begin{theorem} Suppose $\Pi_{A_0}c\neq 0.$ The Lagrangian system  on \( \A_{0}\times \Pi _{\A_{0}}\F \) associated with {\rm (\ref{E:LAG}), (\ref{E:Lag2})}
gives rise, by the Legendre transform, to a completely integrable
Hamiltonian system on \( \A_{0}\times \F^{*} \) with Hamiltonian given by {\rm (\ref{Ham2})}. 
\end{theorem}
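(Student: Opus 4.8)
The plan is to lean on the fact, already established in the discussion preceding the statement, that the Legendre transform $\Phi$ carries the trajectories of the Lagrangian system (\ref{E:Lag2}) onto those of the Hamiltonian vector field $X_{{\cal H}}$ whose Hamiltonian ${\cal H}$ is given by (\ref{Ham2}). So the only thing left to produce is a family of $r=n-m$ functions $f_1,\dots,f_r$ with $f_1={\cal H}$, pairwise in involution for the Poisson bracket, and with everywhere linearly independent differentials. First I would fix an orthonormal basis $(v_1,\dots,v_r)$ of $\A_0$ and work in the induced coordinates $(q_1,\dots,q_r)$ on $\A_0$ together with the conjugate momenta $(p_1,\dots,p_r)$ on $\F^*\subset\A_0$, so that the Poisson bracket is exactly the canonical one used in the definition.

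The single computation that matters is the $q$-dependence of ${\cal H}$. Since the Lagrangian ${\cal L}(q,\dot q)=\langle\Pi_{\A_0}c,q\rangle-h(\Pi_{\A}\dot q)$ depends on $q$ only through the linear term, the standard Legendre-transform identity $\partial_{q_i}{\cal H}=-\partial_{q_i}{\cal L}$ gives $\partial_{q_i}{\cal H}=-\langle\Pi_{\A_0}c,v_i\rangle$, a constant. Hence the equations of motion (\ref{Ham}) reduce to $\dot p=\Pi_{\A_0}c$ and $\dot q=\partial_p{\cal H}$; that is, the momentum moves along the fixed nonzero direction $\gamma:=\Pi_{\A_0}c$. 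This is precisely the ``straight line'' behaviour recorded in Proposition \ref{P:pushforward}, and it is the structural reason integrability should hold.

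With this in hand I would set $f_1={\cal H}$ and, choosing an orthonormal basis $w_2,\dots,w_r$ of the hyperplane $\{w\in\A_0\mid\langle w,\gamma\rangle=0\}$ (of dimension $r-1$ precisely because $\gamma\neq0$), define the linear integrals $f_i(q,p)=\langle w_i,p\rangle$ for $i=2,\dots,r$. Involution is then immediate: for $i,j\geq2$ the $f_i$ depend only on $p$, so $\{f_i,f_j\}=0$ since momenta commute; and $\{f_1,f_i\}=X_{{\cal H}}(f_i)=\langle w_i,\dot p\rangle=\langle w_i,\gamma\rangle=0$. For the independence of the differentials, each $df_i$ with $i\geq2$ is a combination of the $dp_k$ alone, and these are independent because the $w_i$ are; whereas $d{\cal H}$ has $dq$-part $-\sum_i\langle\gamma,v_i\rangle\,dq_i$, which is nonzero since $\gamma\neq0$, so $d{\cal H}$ cannot lie in the span of $df_2,\dots,df_r$. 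Thus $df_1,\dots,df_r$ are linearly independent at every point of $\A_0\times\F^*$, and the system is completely integrable.

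As for difficulty, there is really no serious obstacle here beyond the bookkeeping; the content is entirely concentrated in the observation that the momentum evolves along the constant direction $\Pi_{\A_0}c$. The hypothesis $\Pi_{A_0}c\neq0$ is used in exactly two places: to guarantee that $\gamma^{\perp}\cap\A_0$ has the dimension $r-1$ needed to supply enough commuting momenta, and to ensure that $d{\cal H}$ carries a nonzero $dq$-component so that $f_1$ is independent from those momenta. If one prefers to avoid the transverse basis, one may instead rotate the orthonormal frame so that $\gamma=|\gamma|\,v_1$ and simply take $f_i=p_i$ for $i=2,\dots,r$; the verification is then word for word the same.
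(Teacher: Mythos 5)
Your proposal is correct and follows essentially the same route as the paper: the same integrals of motion (the Hamiltonian together with linear momentum functions $\langle w_i,p\rangle$ spanning the orthogonal complement of $\Pi_{\A_0}c$ in $\A_0$), the same key computation $\partial_q{\cal H}=-\Pi_{\A_0}c$, and the same involution and independence arguments. Your use of the standard identity $\partial_q{\cal H}=-\partial_q{\cal L}$ is just a packaged form of the chain-rule cancellation the paper carries out explicitly, and your closing remark about rotating the frame so that $\gamma=|\gamma|v_1$ matches the paper's choice of completing $\Pi_{\A_0}c$ to an orthonormal basis.
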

\begin{proof} There only remains to prove the complete integrability of the system. To this end, we adapt the proof of \cite[Theorem II.12.2]{BaL89} to our abstract framework. Take the integrals of motion to be $f_{1}={\cal H}$, $f_{i}(q,p)=\<c_{i},p\ra,\:i=2,\dots, r$ where $r=n-m$ and  \(\{ \Pi _{\A_{0}}c,\:c_{2},\dots ,c_{r} \}\) is chosen as to be an
orthonormal basis of \( \A_{0} \). For any \( i,j\in\{2,\dots, r\}, \) \( \{f_{i},f_{j}\} \) is
zero since \( f_{i} \) and \( f_{j} \) only depend on \( p \).  Let \( \phi
_{h,l}^{-1}(q,p) \) (resp. \( (\Pi _{\A_{0}}c)_{l} \)) stand
for the \( l \)-th component of \( \phi_h^{-1} (q,p) \) (resp. the \( l 
\)-th
component
of \( \Pi _{\A_{0}}c \)) and take some $k\in\{1,...,r\}$. Since
\begin{eqnarray*}
\frac{\partial {\cal H}}{\partial q_{k}}(q,p)&=&\frac{\partial (\sum
_{l=1}^r p_{l}\phi ^{-1}_{h,l})}{\partial q_{k}}(q,p)-\frac{\partial ({\cal 
L}\circ
\Phi ^{-1})}{\partial q_{k}}(q,p)\\
&=&\sum _{l=1}^r p_{l}\frac{\partial \phi _{h,l}^{-1}}
{\partial q_{k}}(p,q)-\frac{\partial {\cal L}}{\partial q_{k}}(q,\phi_h 
^{-1}(q,p))
-\sum _{l=1}^r\frac{\partial {\cal L}}{\partial \dot{q}_{l}}
(q,\phi_h ^{-1}(q,p))\frac{\partial \phi _{h,l}}{\partial q_{k}}(q,p)\\
&=&-(\Pi _{\A_{0}}c)_{k}
\end{eqnarray*}
we deduce that for all $i\in \{2,...,r\}$, \(\{{\cal H},f_{i}\}  = \sum _{k=1}^r-\frac{\partial f_{i}}{\partial
p_{k}}\frac{
\partial {\cal H}}{\partial q_{k}}= \<\Pi _{\A_{0}}c,c_{i}\ra=0\).
The second condition for complete integrability is satisfied too, as the
 \(r\times 2r \) matrix
\[
\left(
[\frac{\partial f_{i}}{\partial q_{1}},\dots,\frac{\partial f_{i}}{\partial q_{r}},  \frac{\partial f_{i}}{\partial
p_{1}},\dots,\frac{\partial f_{i}}{\partial
p_{r}}]
\right)_{i=1,\dots,r}=\left( \begin{array}{cc}
\Pi _{\A_{0}}c^T & \star \\
0 & \begin{array}{c}
c_{1}^T\\
\dots \\
c_{r}^T
\end{array}
\end{array}\right) \]
is full rank. \end{proof}

%%%%%%%%%%%%%%%%%%%%%%%%%%%%%%%%%%%%%%%%%%%%%%%%%%%%%%%%%%%%%%%%%%

%\addcontentsline{toc}{section}{References}


\begin{thebibliography}{99}

\bibitem{Aki79}  E. Akin, ``The geometry of population genetics",
Lecture Notes in Biomathematics 31, Springer-Verlag, Berlin, 1979.

\bibitem{Att96} H. Attouch, {\em Viscosity solutions of minimization
problems},
SIAM J. Optim.,  6 (1996), No. 3, pp. 769-806.

\bibitem{AttTeb} H. Attouch and M. Teboulle, {\it A regularized
Lotka Volterra dynamical system as a continuous proximal-like
method in optimization}, December 2001. Submitted.

\bibitem{ACH97} A. Auslender, R. Cominetti and M. Haddou, {\em
Asymptotic analysis for penalty and barrier methods in convex and
linear programming}, Math. Oper. Res.,  22 (1997), pp. 43-62.

\bibitem{BaL89} D.A. Bayer and J.C. Lagarias, {\em The nonlinear geometry of
linear programming I. Affine and projective scaling trajectories;
II. Legendre transform coordinates and central trajectories },
Trans. Amer. Math. Soc.,  314 (1989), No. 2, pp. 499-526 and
527-581.


\bibitem{BoT02} J. Bolte and M. Teboulle, {\em Barrier
operators and associated gradient-like dynamical systems for constrained
minimization problems}, submitted (June 2002).



\bibitem{Bre67} L.M. Bregman, {\em The relaxation method for finding the
common
point of convex sets and its application to the solution of
problems in convex programming}, Zh. Vychisl. Mat. i Mat. Fiz., 7
(1967), pp. 620-631 (in Russian). English transl. in U.S.S.R.
Comput. Math. and Math. Phys.,  7 (1967), pp. 200-217.

\bibitem{Bro88} R.W. Brockett, {\em Dynamical systems that sort
 lists and solve linear programming problems}, Proc. IEEE Conf.
 Decision and Control, Austin, Texas, 1988, pp. 779-803.

\bibitem{Bro91} R.W. Brockett, {\em Dynamical systems that sort
 lists, diagonalize matrices and solve linear programming
 problems}, Linear Alg. Appl., 146 (1991), pp. 79-91.

\bibitem{Bru74} R.E. Bruck, {\em Asymptotic convergence of non linear
contraction
semi-groups in Hilbert space}, J. Func. Anal., 18 (1974), pp
15-26.

\bibitem{CeL81} Y. Censor and A. Lent, {\em An iterative row action method
for
interval convex programming}, J. Optim. Theory Appl.,  34 (1981),
pp. 321-353.

\bibitem{CeZ92} Y. Censor and S.A. Zenios, {\em Proximal
minimization algorithm with $D$-functions}, J. Optim. Theory
Appl.,  73 (1992), pp. 451-464.

\bibitem{ChT93} G. Chen and M. Teboulle, {\em Convergence analysis
of a proximal-like optimization algorithm using Bregman
functions}, SIAM J. Optim., 3 (1993), pp. 538-543.

\bibitem{Com} R. Cominetti, {\em Nonlinear average and convergence of
penalty trajectories in
convex programming}, in ``Ill-posed variational problems and regularization
techniques
(Trier, 1998)", Lecture Notes in Econom. and Math. Systems 477, Springer,
Berlin, 1999, pp. 65-78.

\bibitem{CoS94} R. Cominetti and J. San Mart\'\i n, {\em Asymptotic Analysis
of the  Exponential Penalty Trajectory
in Linear Programming}, Math. Programming,  67 (1994), pp.
169-187.

\bibitem{DoC} M.P. do Carmo, ``Riemannian Geometry (Mathematics,
Theory and Applications)", Birkh\"auser, Boston, 1992.

\bibitem{Dui01} J.J. Duistermaat, {\em On Hessian Riemannian structures},
Asian J. Math., 5 (2001), No. 1, pp. 79-91.

%\bibitem{Eck93} J. Eckstein, {\em Nonlinear proximal point algorithms using
%Bregman functions, with applications to convex programming},
%Math. Oper. Res.,  18 (1993), pp. 202-226.

\bibitem{Fay91a} L.E. Faybusovich, {\em Dynamical systems which
solve optimization problems with linear constraints}, IMA J.
Math. Control and Inf.,  8 (1991), pp. 135-149.


\bibitem{Fay91b} L.E. Faybusovich, {\em Hamiltonian structure of
dynamical systems which solve linear programming problems}, Phys.
D, 53 (1991), pp. 217-232.

\bibitem{Fay91c} L.E. Faybusovich, {\em Interior point methods and
entropy}, Proc. IEEE Conf. Decision and Control, Tucson, Arizona,
1992, pp. 1626-1631.

\bibitem{Fia90} A.V. Fiacco, {\em Perturbed variations of penalty function
methods. Example: Projective SUMT}, Annals of Oper. Res., 27 (1990), pp.
371-380.

\bibitem{HeM94} U. Helmke and J.B. Moore, ``Optimization and Dynamical
Systems'',
Springer-Verlag, London, 1994.

\bibitem{HRZ91} S. Herzel, M.C. Recchini and F. Zirilli, {\em A
quadratically convergent method for linear programming}, Linear
Alg. Appl.,  151 (1991), pp. 255-290.

\bibitem{HiL96} J.B. Hiriart-Urruty  and C. Lemar\'echal, ``Convex Analysis
and
Minimization Algorithms II", Springer-Verlag, Berlin, 1996.

\bibitem{HoS98} J. Hofbauer and K. Sigmund, ``Evolutionary
Games and Population Dynamics", Cambridge University Press, 1998.

\bibitem{IuM00} A.N. Iusem and R.D.C. Monteiro, {\em On dual
convergence of the generalized proximal point method with Bregman
distances}, Math.  Oper. Res.,  25 (2000), No. 4, pp. 606-624.

\bibitem{IuS99} A.N. Iusem, B.F. Svaiter and J.X. Da Cruz Neto, {\em
Central paths, generalized proximal point methods, and Cauchy
trajectories in Riemannian manifolds}, SIAM J. Control Optim.,
37 (1999), No. 2, pp. 566-588.

\bibitem{Kar84} N. Karmarkar, {\em A new polynomial time algorithm
for linear programming}, Combinatorica 4 (1984), pp. 373-395.

\bibitem{Kar90} N. Karmarkar, {\em Riemannian geometry underlying
interior point methods for linear programming}, in ``Mathematical
Developments Arising from Linear Programming", Contemporary
Mathematics 114, J.C. Lagarias and M.J. Todd (eds.), AMS,
Providence, RI, 1990, pp. 51-76.

\bibitem{KenPaw} N. Kenmochi, and I. Pawlow,  {\it A class of
doubly nonlinear elliptic-parabolic equations with time dependent
constraints}, Nonlinear Analysis,  10  (1986),pp 1181-1202

\bibitem{Kiw97a} K.C. Kiwiel, {\em Free-steering relaxation methods for
problems with
strictly convex costs}, Math. Oper. Res.,  22 (1997), No. 2, pp.
326-349.

\bibitem{Kiw97b} K.C. Kiwiel, {\em Proximal
minimization methods with generalized Bregman functions}, SIAM J.
Control Optim.,  35 (1997), pp. 1142-1168.

\bibitem{Lan95} S. Lang, ``Differential and Riemannian Manifolds'',  Springer-Verlag, New York, 1995.

%\bibitem{Mar70} B. Martinet, {\em R\'egularisation d'in\'equations
%variationnelles par approximations succesives}, RAIRO Rech.
%Op\'er.,  4 (1970), pp. 154-158.

\bibitem{Mac89} G.P. McCormick, {\em The continuous Projective SUMT method
for convex programming}, Math. Oper. Res. 14, (1989), No. 2, pp. 203-223.



\bibitem{PalDem82} J. Palis and W. De Melo, ``Geometric theory of
dynamical systems'', Springer, 1982.

\bibitem{Roc70} R.T. Rockafellar, ``Convex Analysis'',
Princeton University Press, Princeton, NJ, 1970.

\bibitem{RoW98} R.T. Rockafellar and J-B. R. Wets, ``Variational Analysis'',
Grundlehren der mathematischen Wissenschaften 317, Springer-Verlag, Berlin
(1998).

%\bibitem{Son86} G. Sonnevend, {\em An ``analytical centre" for
%polyhedrons and new classes of global algorithms for linear
%(smooth, convex) programming}, Proc. 12th IFIP Conf. System
%Modelling, Budapest, 1985, Lecture Notes in Computer Science,
%1986.

\bibitem{Teb92} M. Teboulle, {\em Entropic proximal mappings with
applications to nonlinear programming}, Math. Oper. Res.,  17
(1992), pp. 670-690.

\end{thebibliography}
\end{document}